\theoremstyle{definition}
\newtheorem{theorem}{Theorem}[section]
\newtheorem{corollary}[theorem]{Corollary}
\newtheorem{lemma}[theorem]{Lemma}
\newtheorem{remark}[theorem]{Remark}
\begin{document}

\title{\normalsize\bf ON GENERATING SETS OF YOSHIKAWA MOVES FOR MARKED GRAPH DIAGRAMS OF SURFACE-LINKS}

\author{\small 
JIEON KIM and YEWON JOUNG
\smallskip\\
{\small\it 
Department of Mathematics, Graduate School of Natural Sciences,
}\\ 
{\small\it 
Pusan National University, Busan 609-735, Korea
}\\
{\small\it jieonkim@pusan.ac.kr}\\
{\small\it yewon112@pusan.ac.kr}
\smallskip\\
\smallskip\\
{\small SANG YOUL LEE}
\smallskip\\
{\small\it 
Department of Mathematics, Pusan National University,
}\\ 
{\small\it Busan 609-735, Korea}\\
{\small\it sangyoul@pusan.ac.kr}}

\renewcommand\leftmark{\centerline{\footnotesize 
J. Kim, Y. Joung \& S. Y. Lee}}
\renewcommand\rightmark{\centerline{\footnotesize 
On generating sets of Yoshikawa moves for marked graph diagrams of surface-links}}

\maketitle

\begin{abstract}
A marked graph diagram is a link diagram possibly with marked $4$-valent vertices. S. J. Lomonaco, Jr. and K. Yoshikawa introduced a method of representing surface-links by marked graph diagrams. Specially, K. Yoshikawa suggested local moves on marked graph diagrams, nowadays called Yoshikawa moves. It is now known that two marked graph diagrams representing equivalent surface-links are related by a finite sequence of these Yoshikawa moves. In this paper, we provide some generating sets of Yoshikawa moves on marked graph diagrams representing unoriented surface-links, and also oriented surface-links. We also discuss independence of certain Yoshikawa moves from the other moves.
\end{abstract}

\noindent{\it Mathematics Subject Classification 2000}: 57Q45; 57M25.

\noindent{\it Key words and phrases}: ch-diagram; generating set; marked graph diagram; surface-link; Yoshikawa moves; independence of Yoshikawa moves.



\section{Introduction}
\label{intro}

By a {\it surface-link} we mean a closed 2-manifold smoothly (or piecewise linearly and locally flatly) embedded in the $4$-space $\mathbb R^4$. Two surface-links are said to be {\it equivalent} if they are ambient isotopic. By an {\it unoriented surface-link} we mean a non-orientable surface-link or an orientable surface-link without orientation, while an {\it oriented surface-link} means an orientable surface-link with a fixed orientation.

A {\it marked graph diagram} (or {\it ch-diagram}) is a link diagram possibly with some $4$-valent vertices equipped with markers; \xy (-4,4);(4,-4) **@{-}, 
(4,4);(-4,-4) **@{-},  
(3,-0.2);(-3,-0.2) **@{-},
(3,0);(-3,0) **@{-}, 
(3,0.2);(-3,0.2) **@{-}, 
\endxy. 
For a given marked graph diagram $D$, let $L_-(D)$ and $L_+(D)$ be classical link diagrams obtained from $D$ by replacing each marked vertex \xy (-4,4);(4,-4) **@{-}, 
(4,4);(-4,-4) **@{-},  
(3,-0.2);(-3,-0.2) **@{-},
(3,0);(-3,0) **@{-}, 
(3,0.2);(-3,0.2) **@{-}, 
\endxy with \xy (-4,4);(-4,-4) **\crv{(1,0)},  
(4,4);(4,-4) **\crv{(-1,0)}, 
\endxy and \xy (-4,4);(4,4) **\crv{(0,-1)}, 
(4,-4);(-4,-4) **\crv{(0,1)},   
\endxy, respectively (see Fig.~\ref{fig-nori-mg}). We call $L_-(D)$ and $L_+(D)$ the {\it negative resolution} and the {\it positive resolution} of D, respectively.
A marked graph diagram $D$ is said to be {\it admissible} if both resolutions $L_-(D)$ and $L_+(D)$ are trivial link diagrams.

\begin{figure}[ht]
\begin{center}
\resizebox{0.52\textwidth}{!}{%
  \includegraphics{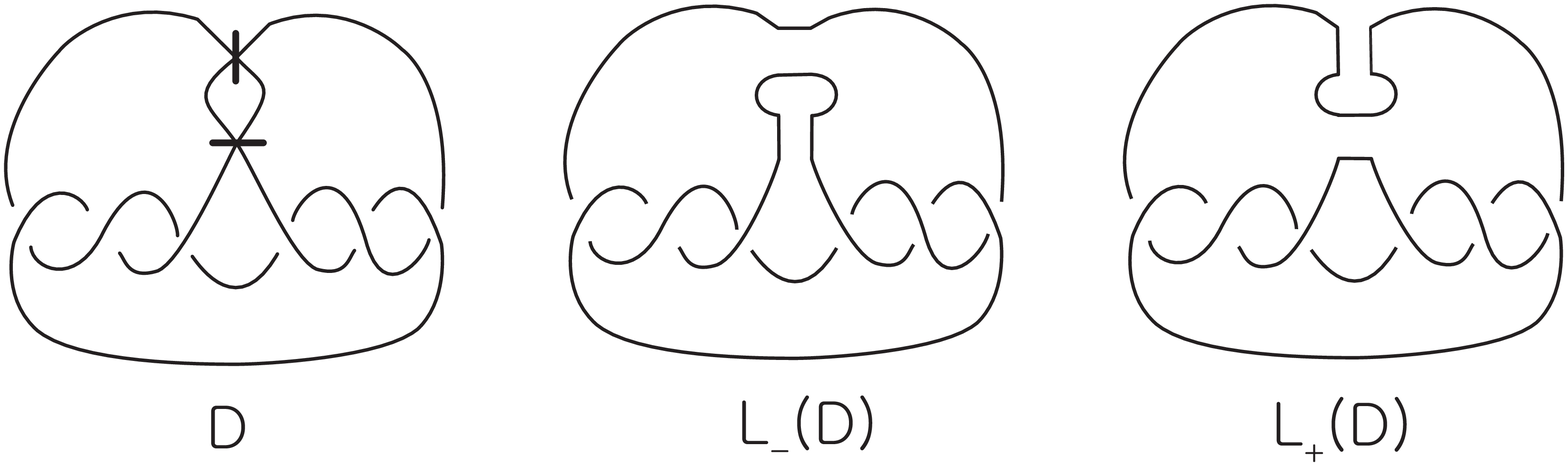}}
\caption{A marked graph diagram and its resolutions}\label{fig-nori-mg}
\end{center}
\end{figure}

S. J. Lomonaco, Jr. \cite{Lo} and K. Yoshikawa \cite{Yo} introduced a method of describing surface-links by marked graph diagrams. Indeed, every surface-link $\mathcal L$ is represented by an admissible marked graph diagram $D$. Moreover, if $D$ is an admissible marked graph diagram representing a surface-link $\mathcal L$, then we can construct a surface-link $\mathcal S(D)$ from $D$ so that $\mathcal S(D)$ is equivalent to $\mathcal L$. 
An {\it oriented marked graph diagram} is a marked graph diagram in which every edge has an orientation such that each marked vertex looks like
\xy (-4,4);(4,-4) **@{-}, 
(4,4);(-4,-4) **@{-}, 
(3,3.2)*{\llcorner}, 
(-3,-3.4)*{\urcorner}, 
(-2.5,2)*{\ulcorner},
(2.5,-2.4)*{\lrcorner}, 
(3,-0.2);(-3,-0.2) **@{-},
(3,0);(-3,0) **@{-}, 
(3,0.2);(-3,0.2) **@{-}, 
\endxy (see Fig.~\ref{fig-ori-mg}).
If $\mathcal L$ is an oriented surface-link, then it is represented by an admissible oriented marked graph diagram. See Section \ref{sect-omgr-osl} for details. We say that an (oriented) surface-link $\mathcal L$ is {\it presented} by an (oriented) marked graph diagram $D$ if $\mathcal L$ is ambient isotopic to the (oriented) surface-link $\mathcal S(D)$. 

\begin{figure}[ht]
\begin{center}
\resizebox{0.17\textwidth}{!}{%
  \includegraphics{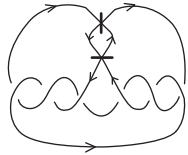}}
\caption{An oriented marked graph diagram}
\label{fig-ori-mg}
\end{center}
\end{figure}

In \cite{Yo}, K. Yoshikawa introduced local moves on marked graph diagrams, nowadays called Yoshikawa moves. In what follows the {\it unoriented Yoshikawa moves} mean the deformations $\Omega_1, \Omega_2, \Omega_3, \Omega_4, \Omega'_4, \Omega_5$(Type I) and $\Omega_6, \Omega'_6, \Omega_7, \Omega_8$ (Type II) on marked graph diagrams without orientations and their all mirror moves shown in Fig.~\ref{fig-moves-type-II}, Fig.~\ref{fig-r45678n} and Fig.~\ref{fig-rmove}. 
The {\it oriented Yoshikawa moves} mean the unoriented Yoshikawa moves with all possible orientations shown in  Fig.~\ref{fig-moves-type-II-o}, Fig.~\ref{fig-r123o} and Fig.~\ref{fig-om2}. Then we have

 \begin{theorem}[\cite{KK,Sw}]\label{thm-eqiv-ch-digs}
Let $D$ and $D'$ be admissible (oriented) marked graph diagrams and let $\mathcal {L}$ and $\mathcal {L}'$ be the unoriented (oriented) surface-links presented by $D$ and $D'$, respectively. Then the following statements are equivalent.
\begin{itemize}
  \item [(1)] $\mathcal {L}$ and $\mathcal {L}'$ are equivalent.
  \item [(2)] $D$ can be related to $D'$ by a finite sequence of unoriented (oriented) Yoshikawa moves.
\end{itemize}
\end{theorem}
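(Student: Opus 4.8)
The plan is to prove Theorem~\ref{thm-eqiv-ch-digs} by reducing it, via the known bijective-type correspondence between surface-links and admissible marked graph diagrams, to a statement about moves on diagrams. The implication $(2)\Rightarrow(1)$ is the easy direction: one checks that each unoriented (oriented) Yoshikawa move $\Omega_1,\dots,\Omega_8$ preserves the ambient isotopy class of the associated surface-link $\mathcal S(D)$. For the Reidemeister-type moves $\Omega_1,\Omega_2,\Omega_3$ this is clear because the construction $\mathcal S(D)$ only depends on the regular neighborhoods of the resolutions together with the saddle data, so an isotopy supported near the move in $\mathbb R^3$ extends to an ambient isotopy in $\mathbb R^4$; for the genuinely $4$-dimensional moves $\Omega_4,\Omega'_4,\Omega_5$ (Type I) and $\Omega_6,\Omega'_6,\Omega_7,\Omega_8$ (Type II) one exhibits, move by move, an explicit isotopy of the corresponding cobordism pieces. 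This is routine once the construction $\mathcal S(D)$ is recalled, so I would relegate it to citing \cite{Lo,Yo,KK,Sw}.

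The substance is $(1)\Rightarrow(2)$. First I would recall that any surface-link $\mathcal L$ can be put in (hyperbolic) Morse position with respect to the height function on $\mathbb R^4$, so that it is built from a single band of minima (maximal points creating trivial circles), a single band of saddles, and a single band of maxima, and that the middle cross-section is exactly the link presented by a marked graph diagram. Then an equivalence $\mathcal L\simeq\mathcal L'$ is an ambient isotopy of $\mathbb R^4$; by general position and the (parametrized) Morse/Cerf theory for such isotopies one decomposes it into a finite sequence of elementary steps: (a) ambient isotopies of the middle cross-section in $\mathbb R^3$ not changing the critical structure — these are handled by the Reidemeister moves $\Omega_1,\Omega_2,\Omega_3$ and the marked-vertex moves $\Omega_4,\Omega'_4,\Omega_5$ which realize all Reidemeister-type modifications in the presence of a $4$-valent marked vertex; and (b) births/deaths and handle-slides of critical points of the height function restricted to $\mathcal L$, which change the combinatorial type of the marked graph diagram and are accounted for by the Type II moves $\Omega_6,\Omega'_6,\Omega_7,\Omega_8$. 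The oriented case follows the same scheme, tracking orientations throughout and using the oriented versions of each move.

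Concretely, the key steps in order: (i) fix normal forms for $\mathcal L$ and $\mathcal L'$ coming from $D$ and $D'$; (ii) take an ambient isotopy $\Phi_t$ realizing the equivalence and perturb it to be generic with respect to the projection to $\mathbb R^3\times\mathbb R_{\text{height}}$; (iii) slice the parameter interval $[0,1]$ at the finitely many critical parameters where the diagram type changes, so on each subinterval the change is one elementary modification; (iv) match each elementary modification to one Yoshikawa move (or a bounded composition of them) — this is the heart of the argument and is exactly where \cite{KK} and \cite{Sw} do the careful case analysis; (v) conclude that $D$ is carried to $D'$ by the resulting finite word in the Yoshikawa moves. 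The main obstacle is step (iv): one must be certain the chosen list $\Omega_1,\dots,\Omega_8$ (together with mirror images and all orientations) is \emph{complete}, i.e.\ that no exotic elementary modification of the height function or of the middle slice escapes the list; this is precisely the content of the theorem as proved independently by Kearton--Kurlin \cite{KK} and Swenton \cite{Sw}, and I would invoke their analysis rather than reprove it. A secondary technical point is ensuring that the passage between the diagrammatic equivalence relation and the $4$-dimensional isotopy relation is genuinely two-sided, which is what makes the stated biconditional (rather than a one-way implication) hold.
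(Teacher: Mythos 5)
The paper does not prove this theorem at all: it is stated as a known result and attributed to Kearton--Kurlin \cite{KK} and Swenton \cite{Sw}, which is exactly where your outline also places the substantive content (the completeness of the move list in your step (iv)). Your sketch of the standard strategy --- the easy direction $(2)\Rightarrow(1)$ by checking each move preserves $\mathcal S(D)$, and the hard direction via hyperbolic splitting and a generic/Cerf-theoretic decomposition of the ambient isotopy --- is consistent with how those references proceed, so your proposal matches the paper's treatment (modulo the small slip of calling minima ``maximal points'').
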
 
Using these terminologies, some properties and invariants of surface-links were studied in \cite{As,JKaL,JKL,JKL2,Le1,Le2,Le3,So,Yo}.

\begin{figure}
\begin{center}
\centerline{
\xy (12,2);(16,6) **@{-},
(12,6);(13.5,4.5) **@{-},
(14.5,3.5);(16,2) **@{-},
(16,6);(22,6) **\crv{(18,8)&(20,8)},
(16,2);(22,2) **\crv{(18,0)&(20,0)}, (22,6);(22,2) **\crv{(23.5,4)},
(7,8);(12,6) **\crv{(10,8)}, (7,0);(12,2) **\crv{(10,0)},
(35,5);(45,5) **@{-} ?>*\dir{>}, (35,3);(45,3) **@{-} ?<*\dir{<},
(57,8);(57,0) **\crv{(67,7)&(67,1)}, (-5,4)*{\Omega_1 :}, (73,4)*{},
\endxy}

\vskip.3cm

\centerline{ \xy (7,7);(7,1)  **\crv{(23,6)&(23,2)}, (16,6.3);(23,7)
**\crv{(19,6.9)}, (16,1.7);(23,1) **\crv{(19,1.1)},
(14,5.7);(14,2.3) **\crv{(8,4)},
(35,5);(45,5) **@{-} ?>*\dir{>}, (35,3);(45,3) **@{-} ?<*\dir{<},
(57,7);(57,1) **\crv{(65,6)&(65,2)}, (73,7);(73,1)
**\crv{(65,6)&(65,2)}, (-5,4)*{\Omega_2 :},
\endxy}

\vskip.3cm

\centerline{ \xy (7,6);(23,6)  **\crv{(15,-2)},
(10,0);(11.5,1.8) **@{-},
(12.5,3);(14.5,5.5) **@{-},
(15.5,6.6);(20,12) **@{-},
(10,12);(17.5,3) **@{-},
(18.5,1.8);(20,0) **@{-},
(35,7);(45,7) **@{-} ?>*\dir{>},
(35,5);(45,5) **@{-} ?<*\dir{<},
(57,6);(73,6)  **\crv{(65,14)},
(70,12);(68.5,10.2) **@{-},
(62.5,9);(70,0) **@{-},
(67.5,9);(65.6,6.6) **@{-},
(64.5,5.3);(60,0) **@{-},
(61.5,10.2);(60,12) **@{-},
(-5,6)*{\Omega_3 :},
\endxy}

\vskip.3cm

 \centerline{ \xy
 (7,6);(23,6)  **\crv{(15,-2)},
 (10,0);(11.5,1.8) **@{-},
(12.5,3);(20,12) **@{-},
(10,12);(17.5,3) **@{-},
(18.5,1.8);(20,0) **@{-},
(15,6);(17,6) **@{-},
(13,6.1);(17,6.1) **@{-},
(13,5.9);(17,5.9) **@{-},
(13,6.2);(17,6.2) **@{-},
(13,5.8);(17,5.8) **@{-},
(35,7);(45,7) **@{-} ?>*\dir{>},
(35,5);(45,5) **@{-} ?<*\dir{<},
(57,6);(73,6)  **\crv{(65,14)},
(70,12);(68.5,10.2) **@{-},
(67.5,9);(60,0) **@{-},
(70,0);(62.5,9) **@{-},
(61.5,10.2);(60,12) **@{-},
(63,6.1);(67,6.1) **@{-},
(63,5.9);(67,5.9) **@{-},
(63,6.2);(67,6.2) **@{-},
(63,5.8);(67,5.8) **@{-},
(-5,6)*{\Omega_4:},
\endxy}

\vskip.3cm

 \centerline{ \xy 
  (13,2.2);(17,2.2)  **\crv{(15,1.7)}, 
  (7,6);(11,3)  **\crv{(10,3.5)}, 
  (23,6);(19,3)  **\crv{(20,3.5)}, 
 (10,0);(20,12) **@{-}, 
(10,12);(20,0) **@{-}, 
(13,6);(17,6) **@{-}, (13,6.1);(17,6.1) **@{-}, (13,5.9);(17,5.9)
**@{-}, (13,6.2);(17,6.2) **@{-}, (13,5.8);(17,5.8) **@{-}, 
(35,7);(45,7) **@{-} ?>*\dir{>}, 
(35,5);(45,5) **@{-} ?<*\dir{<},
   (63,9.8);(67,9.8)  **\crv{(65,10.3)}, 
  (57,6);(61,9)  **\crv{(60,8.5)}, 
  (73,6);(69,9)  **\crv{(70,8.5)}, 
(70,12);(60,0) **@{-}, 
(70,0);(60,12) **@{-}, 
(63,6);(67,6) **@{-}, 
(63,6.1);(67,6.1) **@{-}, 
(63,5.9);(67,5.9) **@{-},
(63,6.2);(67,6.2) **@{-}, 
(63,5.8);(67,5.8) **@{-}, 
(-5,6)*{\Omega'_4:},
\endxy}

\vskip.3cm

 \centerline{ \xy (9,2);(13,6) **@{-}, (9,6);(10.5,4.5) **@{-},
(11.5,3.5);(13,2) **@{-}, (17,2);(21,6) **@{-}, (17,6);(21,2)
**@{-}, (13,6);(17,6) **\crv{(15,8)}, (13,2);(17,2) **\crv{(15,0)},
(7,7);(9,6) **\crv{(8,7)}, (7,1);(9,2) **\crv{(8,1)}, (23,7);(21,6)
**\crv{(22,7)}, (23,1);(21,2) **\crv{(22,1)},
(17,4);(21,4) **@{-},
(17,4.1);(21,4.1) **@{-},
(17,3.9);(21,3.9) **@{-},
(17,4.2);(21,4.2) **@{-},
(17,3.8);(21,3.8) **@{-},
(35,5);(45,5) **@{-} ?>*\dir{>}, (35,3);(45,3) **@{-} ?<*\dir{<},
(59,2);(63,6) **@{-}, (59,6);(63,2) **@{-}, (67,2);(71,6) **@{-},
(67,6);(68.5,4.5) **@{-}, (69.5,3.5);(71,2) **@{-}, (63,6);(67,6)
**\crv{(65,8)}, (63,2);(67,2) **\crv{(65,0)}, (57,7);(59,6)
**\crv{(58,7)}, (57,1);(59,2) **\crv{(58,1)}, (73,7);(71,6)
**\crv{(72,7)}, (73,1);(71,2) **\crv{(72,1)},
(59,4);(63,4) **@{-},
(59,4.1);(63,4.1) **@{-}, (59,3.9);(63,3.9) **@{-},
(59,4.2);(63,4.2) **@{-}, (59,3.8);(63,3.8) **@{-}, 
(-5,4)*{\Omega_5:},
\endxy}

\vskip 0.3cm

\centerline{ \xy (12,6);(16,2) **@{-}, (12,2);(16,6) **@{-},
(16,6);(22,6) **\crv{(18,8)&(20,8)}, (16,2);(22,2)
**\crv{(18,0)&(20,0)}, (22,6);(22,2) **\crv{(23.5,4)}, (7,8);(12,6)
**\crv{(10,8)}, (7,0);(12,2) **\crv{(10,0)},
(35,5);(45,5) **@{-} ?>*\dir{>}, (35,3);(45,3) **@{-} ?<*\dir{<},
(57,8);(57,0) **\crv{(67,7)&(67,1)}, (-5,4)*{\Omega_6 :}, (73,4)*{},
(14,6);(14,2) **@{-}, (14.1,6);(14.1,2) **@{-}, (13.9,6);(13.9,2)
**@{-}, (14.2,6);(14.2,2) **@{-}, (13.8,6);(13.8,2) **@{-},
\endxy}

\vskip.3cm

\centerline{ \xy (12,6);(16,2) **@{-}, (12,2);(16,6) **@{-},
(16,6);(22,6) **\crv{(18,8)&(20,8)}, (16,2);(22,2)
**\crv{(18,0)&(20,0)}, (22,6);(22,2) **\crv{(23.5,4)}, (7,8);(12,6)
**\crv{(10,8)}, (7,0);(12,2) **\crv{(10,0)},
(35,5);(45,5) **@{-} ?>*\dir{>}, (35,3);(45,3) **@{-} ?<*\dir{<},
(57,8);(57,0) **\crv{(67,7)&(67,1)}, (-5,4)*{\Omega'_6 :},
(73,4)*{}, (12,4);(16,4) **@{-}, (12,4.1);(16,4.1) **@{-},
(12,4.2);(16,4.2) **@{-}, (12,3.9);(16,3.9) **@{-},
(12,3.8);(16,3.8) **@{-},
\endxy}

\vskip.3cm

\centerline{ \xy (9,4);(17,12) **@{-}, (9,8);(13,4) **@{-},
(17,12);(21,16) **@{-}, (17,16);(21,12) **@{-}, (7,0);(9,4)
**\crv{(7,2)}, (7,12);(9,8) **\crv{(7,10)}, (15,0);(13,4)
**\crv{(15,2)}, (17,16);(15,20) **\crv{(15,18)}, (21,16);(23,20)
**\crv{(23,18)}, (21,12);(23,8) **\crv{(23,10)}, (7,12);(7,20)
**@{-}, (23,8);(23,0) **@{-},
(11,4);(11,8) **@{-},
(10.9,4);(10.9,8) **@{-},
(11.1,4);(11.1,8) **@{-},
(10.8,4);(10.8,8) **@{-},
(11.2,4);(11.2,8) **@{-},
(17,14);(21,14) **@{-},
(17,14.1);(21,14.1) **@{-},
(17,13.9);(21,13.9) **@{-},
(17,14.2);(21,14.2) **@{-},
(17,13.8);(21,13.8) **@{-},
(35,11);(45,11) **@{-} ?>*\dir{>}, (35,9);(45,9) **@{-} ?<*\dir{<},
(71,4);(63,12) **@{-}, (71,8);(67,4) **@{-}, (63,12);(59,16) **@{-},
(63,16);(59,12) **@{-}, (73,0);(71,4) **\crv{(73,2)}, (73,12);(71,8)
**\crv{(73,10)}, (65,0);(67,4) **\crv{(65,2)}, (63,16);(65,20)
**\crv{(65,18)}, (59,16);(57,20) **\crv{(57,18)}, (59,12);(57,8)
**\crv{(57,10)}, (73,12);(73,20) **@{-}, (57,8);(57,0) **@{-},
(61,12);(61,16) **@{-},
(61.1,12);(61.1,16) **@{-},
(60.9,12);(60.9,16) **@{-},
(61.2,12);(61.2,16) **@{-},
(60.8,12);(60.8,16) **@{-},
(67,6);(71,6) **@{-},
(67,6.1);(71,6.1) **@{-},
(67,5.9);(71,5.9) **@{-},
(67,6.2);(71,6.2) **@{-},
(67,5.8);(71,5.8) **@{-},
(-5,10)*{\Omega_7:},
 \endxy}

\vskip.3cm

\centerline{ \xy (7,20);(14.2,11) **@{-}, (15.8,9);(17.4,7) **@{-},
(19,5);(23,0) **@{-}, (13,20);(7,12) **@{-}, (7,12);(11.2,7) **@{-},
(12.7,5.2);(14.4,3.2) **@{-}, (15.7,1.6);(17,0) **@{-},
(17,20);(23,12) **@{-}, (13,0);(23,12) **@{-}, (7,0);(23,20) **@{-},
(10,18);(10,14) **@{-}, (10.1,18);(10.1,14) **@{-},
(9.9,18);(9.9,14) **@{-}, (10.2,18);(10.2,14) **@{-},
(9.8,18);(9.8,14) **@{-}, (18,16);(22,16) **@{-},
(18,16.1);(22,16.1) **@{-}, (18,15.9);(22,15.9) **@{-},
(18,16.2);(22,16.2) **@{-}, (18,15.8);(22,15.8) **@{-},
(35,11);(45,11) **@{-} ?>*\dir{>}, (35,9);(45,9) **@{-} ?<*\dir{<},
(73,20);(65.8,11) **@{-}, (64.2,9);(62.6,7) **@{-}, (61,5);(57,0)
**@{-}, (67,20);(73,12) **@{-}, (73,12);(68.8,7) **@{-},
(67.3,5.2);(65.6,3.2) **@{-}, (64.3,1.6);(63,0) **@{-},
(63,20);(57,12) **@{-}, (67,0);(57,12) **@{-}, (73,0);(57,20)
**@{-},
(60,18);(60,14) **@{-}, (60.1,18);(60.1,14) **@{-},
(59.9,18);(59.9,14) **@{-}, (60.2,18);(60.2,14) **@{-},
(59.8,18);(59.8,14) **@{-}, (68,16);(72,16) **@{-},
(68,16.1);(72,16.1) **@{-}, (68,15.9);(72,15.9) **@{-},
(68,16.2);(72,16.2) **@{-}, (68,15.8);(72,15.8) **@{-},
(-5,10)*{\Omega_8:},
\endxy}
\caption{A generating set of unoriented Yoshikawa moves}
\label{fig-moves-type-II}
\quad
 \centerline{ \xy 
 (7,6);(23,6)  **\crv{(15,-2)}, 
 (10,0);(11.5,1.8) **@{-},
(12.5,3);(20,12) **@{-}, 
(10,12);(17.5,3) **@{-}, 
(18.5,1.8);(20,0) **@{-}, 
(15,4);(15,8) **@{-},
(14.9,4);(14.9,8) **@{-},
(15.1,4);(15.1,8) **@{-},
(14.8,4);(14.8,8) **@{-},
(15.2,4);(15.2,8) **@{-},
(25,7);(30,7) **@{-} ?>*\dir{>} ?<*\dir{<}, (32,7) *{},
(28,3)*{\Omega_{4a}},
\endxy
\xy (57,6);(73,6)  **\crv{(65,14)}, 
(70,12);(68.5,10.2) **@{-},
(67.5,9);(60,0) **@{-}, 
(70,0);(62.5,9) **@{-},
(61.5,10.2);(60,12) **@{-},  
(65,4);(65,8) **@{-},
(64.9,4);(64.9,8) **@{-},
(65.1,4);(65.1,8) **@{-},
(64.8,4);(64.8,8) **@{-},
(65.2,4);(65.2,8) **@{-},
\endxy
\qquad
\xy 
(7,6);(11,3)  **\crv{(10,3.5)}, 
(23,6);(19,3)  **\crv{(20,3.5)}, 
(13,2);(17,2)  **\crv{(15,1)}, 
(10,0);(20,12) **@{-},
(10,12);(20,0) **@{-}, 
(15,4);(15,8) **@{-},
(14.9,4);(14.9,8) **@{-},
(15.1,4);(15.1,8) **@{-},
(14.8,4);(14.8,8) **@{-},
(15.2,4);(15.2,8) **@{-},
 (25,7);(30,7) **@{-} ?>*\dir{>} ?<*\dir{<}, (32,7) *{},
 (28,3)*{\Omega'_{4a}},
 \endxy 
\xy (57,6);(61,9)  **\crv{(60,8.5)}, 
(73,6);(69,9)  **\crv{(70,8.5)}, 
(63,10);(67,10)  **\crv{(65,11)}, 
(70,12);(60,0) **@{-}, 
(70,0);(60,12) **@{-},  
(65,4);(65,8) **@{-},
(64.9,4);(64.9,8) **@{-},
(65.1,4);(65.1,8) **@{-},
(64.8,4);(64.8,8) **@{-},
(65.2,4);(65.2,8) **@{-},
\endxy}

\vskip.3cm

\centerline{ 
\xy (9,2);(13,6) **@{-}, (9,6);(10.5,4.5) **@{-},
(11.5,3.5);(13,2) **@{-}, (17,2);(21,6) **@{-}, (17,6);(21,2)
**@{-}, (13,6);(17,6) **\crv{(15,8)}, (13,2);(17,2) **\crv{(15,0)},
(7,7);(9,6) **\crv{(8,7)}, (7,1);(9,2) **\crv{(8,1)}, (23,7);(21,6)
**\crv{(22,7)}, (23,1);(21,2) **\crv{(22,1)}, 
(19,2);(19,6) **@{-},
(19.1,2);(19.1,6) **@{-},
(18.9,2);(18.9,6) **@{-},
(19.2,2);(19.2,6) **@{-},
(18.8,2);(18.8,6) **@{-},
 (25,3.3);(30,3.3) **@{-} ?>*\dir{>} ?<*\dir{<}, (32,7) *{},
 (28,0)*{\Omega_{5a}},
\endxy
\xy (59,2);(63,6) **@{-}, (59,6);(63,2) **@{-}, (67,2);(71,6) **@{-},
(67,6);(68.5,4.5) **@{-}, (69.5,3.5);(71,2) **@{-}, (63,6);(67,6)
**\crv{(65,8)}, (63,2);(67,2) **\crv{(65,0)}, (57,7);(59,6)
**\crv{(58,7)}, (57,1);(59,2) **\crv{(58,1)}, (73,7);(71,6)
**\crv{(72,7)}, (73,1);(71,2) **\crv{(72,1)}, 
(61,0) *{}, 
(61,2);(61,6) **@{-},
(61.1,2);(61.1,6) **@{-},
(60.9,2);(60.9,6) **@{-},
(61.2,2);(61.2,6) **@{-},
(60.8,2);(60.8,6) **@{-},
\endxy
\quad
\xy (11.7,4.7);(13,6) **@{-},  
(9,2);(10.6,3.4) **@{-}, 
(9,6);(12.5,2.5) **@{-},
(11.5,3.5);(13,2) **@{-}, 
(17,2);(21,6) **@{-}, 
(17,6);(21,2) **@{-}, 
(13,6);(17,6) **\crv{(15,8)}, 
(13,2);(17,2) **\crv{(15,0)},
(7,7);(9,6) **\crv{(8,7)}, (7,1);(9,2) **\crv{(8,1)}, (23,7);(21,6)
**\crv{(22,7)}, (23,1);(21,2) **\crv{(22,1)}, 
(17,4);(21,4) **@{-},
(17,4.1);(21,4.1) **@{-}, 
(17,3.9);(21,3.9) **@{-},
(17,4.2);(21,4.2) **@{-}, 
(17,3.8);(21,3.8) **@{-},
 (25,3.3);(30,3.3) **@{-} ?>*\dir{>} ?<*\dir{<}, 
 (28,0)*{\Omega_{5b}},
\endxy
\xy (59,2);(63,6) **@{-}, 
(59,6);(63,2) **@{-}, 
(67,2);(68.3,3.3) **@{-},
(69.5,4.7);(71,6) **@{-},
(67,6);(69.5,3.5) **@{-}, 
(69.5,3.5);(71,2) **@{-}, 
(63,6);(67,6) **\crv{(65,8)}, 
(63,2);(67,2) **\crv{(65,0)}, 
(57,7);(59,6) **\crv{(58,7)}, 
(57,1);(59,2) **\crv{(58,1)}, 
(73,7);(71,6) **\crv{(72,7)}, 
(73,1);(71,2) **\crv{(72,1)}, 
(59,4);(63,4) **@{-},
(59,4.1);(63,4.1) **@{-}, 
(59,3.9);(63,3.9) **@{-},
(59,4.2);(63,4.2) **@{-}, 
(59,3.8);(63,3.8) **@{-},
(61,0) *{}, 
\endxy
\quad
\xy (11.7,4.7);(13,6) **@{-},  
(9,2);(10.6,3.4) **@{-}, 
(9,6);(12.5,2.5) **@{-},
(11.5,3.5);(13,2) **@{-}, 
(17,2);(21,6) **@{-}, 
(17,6);(21,2) **@{-}, 
(13,6);(17,6) **\crv{(15,8)}, (13,2);(17,2) **\crv{(15,0)},
(7,7);(9,6) **\crv{(8,7)}, (7,1);(9,2) **\crv{(8,1)}, (23,7);(21,6)
**\crv{(22,7)}, (23,1);(21,2) **\crv{(22,1)}, 
(19,2);(19,6) **@{-},
(19.1,2);(19.1,6) **@{-},
(18.9,2);(18.9,6) **@{-},
(19.2,2);(19.2,6) **@{-},
(18.8,2);(18.8,6) **@{-},
 (25,3.3);(30,3.3) **@{-} ?>*\dir{>} ?<*\dir{<}, (32,7) *{},
 (28,0)*{\Omega_{5c}},
\endxy
\xy (59,2);(63,6) **@{-}, 
(59,6);(63,2) **@{-}, 
(67,2);(68.3,3.3) **@{-},
(69.5,4.7);(71,6) **@{-},
(67,6);(69.5,3.5) **@{-}, 
(69.5,3.5);(71,2) **@{-},  
(63,6);(67,6) **\crv{(65,8)}, 
(63,2);(67,2) **\crv{(65,0)}, 
(57,7);(59,6) **\crv{(58,7)}, 
(57,1);(59,2) **\crv{(58,1)}, 
(73,7);(71,6) **\crv{(72,7)}, 
(73,1);(71,2) **\crv{(72,1)}, 
(61,0) *{}, 
(61,2);(61,6) **@{-},
(61.1,2);(61.1,6) **@{-},
(60.9,2);(60.9,6) **@{-},
(61.2,2);(61.2,6) **@{-},
(60.8,2);(60.8,6) **@{-},
\endxy}

\vskip.2cm

\centerline{ 
\xy (7,20);(14.2,11) **@{-}, (15.8,9);(17.4,7) **@{-},
(19,5);(23,0) **@{-}, (13,20);(7,12) **@{-}, (7,12);(11.2,7) **@{-},
(12.7,5.2);(14.4,3.2) **@{-}, (15.7,1.6);(17,0) **@{-},
(17,20);(23,12) **@{-}, (13,0);(23,12) **@{-}, (7,0);(23,20) **@{-},
(20,18);(20,14) **@{-}, 
(20.1,18);(20.1,14) **@{-},
(19.9,18);(19.9,14) **@{-}, 
(20.2,18);(20.2,14) **@{-},
(19.8,18);(19.8,14) **@{-},
(8,16);(12,16) **@{-},
(8,16.1);(12,16.1) **@{-}, 
(8,15.9);(12,15.9) **@{-},
(8,16.2);(12,16.2) **@{-}, 
(8,15.8);(12,15.8) **@{-},
(27,8.3);(32,8.3) **@{-} ?>*\dir{>} ?<*\dir{<}, (37,12) *{},(30,5)*{\Omega_{8a}},
 \endxy
\xy (73,20);(65.8,11) **@{-}, (64.2,9);(62.6,7) **@{-}, (61,5);(57,0)
**@{-}, (67,20);(73,12) **@{-}, (73,12);(68.8,7) **@{-},
(67.3,5.2);(65.6,3.2) **@{-}, (64.3,1.6);(63,0) **@{-},
(63,20);(57,12) **@{-}, (67,0);(57,12) **@{-}, (73,0);(57,20)
**@{-},
(70,18);(70,14) **@{-}, 
(70.1,18);(70.1,14) **@{-},
(69.9,18);(69.9,14) **@{-}, 
(70.2,18);(70.2,14) **@{-},
(69.8,18);(69.8,14) **@{-}, 
(58,16);(62,16) **@{-},
(58,16.1);(62,16.1) **@{-}, 
(58,15.9);(62,15.9) **@{-},
(58,16.2);(62,16.2) **@{-}, 
(58,15.8);(62,15.8) **@{-},
\endxy}
\caption{Unoriented Yoshikawa moves}
\label{fig-r45678n}
\end{center}
\end{figure}

\begin{figure}
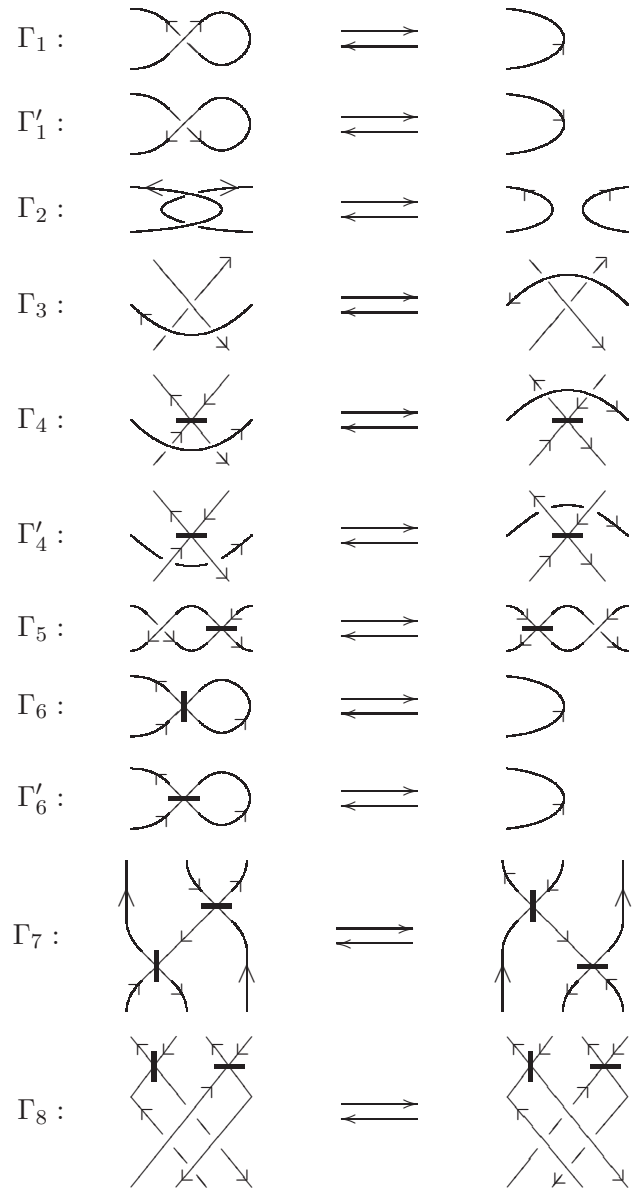

\begin{center}
\centerline{ 
\xy (12,2);(16,6) **@{-}, 
(12,6);(13.5,4.5) **@{-},
(14.5,3.5);(16,2) **@{-}, 
(16,6);(22,6) **\crv{(18,8)&(20,8)},
(16,2);(22,2) **\crv{(18,0)&(20,0)}, (22,6);(22,2) **\crv{(23.5,4)},
(7,8);(12,6) **\crv{(10,8)}, (7,0);(12,2) **\crv{(10,0)}, (12.4,5) *{\ulcorner}, (15.6,5) *{\urcorner},
(35,5);(45,5) **@{-} ?>*\dir{>}, (35,3);(45,3) **@{-} ?<*\dir{<},
(63.8,2) *{\urcorner},
(57,8);(57,0) **\crv{(67,7)&(67,1)}, (-5,4)*{\Gamma_1 :}, (73,4)*{},
\endxy }

\vskip.3cm

\centerline{ 
\xy (12,2);(16,6) **@{-}, 
(12,6);(13.5,4.5) **@{-},
(14.5,3.5);(16,2) **@{-}, 
(16,6);(22,6) **\crv{(18,8)&(20,8)},
(16,2);(22,2) **\crv{(18,0)&(20,0)}, (22,6);(22,2) **\crv{(23.5,4)},
(7,8);(12,6) **\crv{(10,8)}, (7,0);(12,2) **\crv{(10,0)}, (12.4,2.5) *{\llcorner}, (15.6,2.5) *{\lrcorner},
(35,5);(45,5) **@{-} ?>*\dir{>}, (35,3);(45,3) **@{-} ?<*\dir{<},(63.85,5.2) *{\lrcorner},
(57,8);(57,0) **\crv{(67,7)&(67,1)}, (-5,4)*{\Gamma'_1 :}, (73,4)*{},
\endxy }

\vskip.3cm

\centerline{ \xy (7,7);(7,1)  **\crv{(23,6)&(23,2)}, (16,6.3);(23,7)
**\crv{(19,6.9)}, (16,1.7);(23,1) **\crv{(19,1.1)},
(14,5.7);(14,2.3) **\crv{(8,4)}, (10,6.9) *{<}, (20,6.9) *{>},
(35,5);(45,5) **@{-} ?>*\dir{>}, (35,3);(45,3) **@{-} ?<*\dir{<},
(57,7);(57,1) **\crv{(65,6)&(65,2)}, (73,7);(73,1)
**\crv{(65,6)&(65,2)}, (60,5.3) *{\ulcorner}, (70,5.3) *{\urcorner}, (-5,4)*{\Gamma_2 :},
\endxy}

\vskip.3cm

\centerline{ 
\xy (7,6);(23,6) **\crv{(15,-2)}, 
(10,0);(11.5,1.8) **@{-}, 
(17.5,3);(14.5,6.6) **@{-},
(14.5,6.6);(10,12) **@{-}, 
(20,12);(15.5,6.6) **@{-},
(14.5,5.5);(12.5,3) **@{-},
(18.5,1.8);(20,0) **@{-},
(19.5,11) *{\urcorner}, 
(19,1.1) *{\lrcorner},  
(9,3.5) *{\ulcorner},
(35,7);(45,7) **@{-} ?>*\dir{>}, 
(35,5);(45,5) **@{-} ?<*\dir{<},
(57,6);(73,6) **\crv{(65,14)}, 
(70,12);(68.5,10.2) **@{-}, 
(67.5,9);(65.5,6.5) **@{-}, 
(64.6,5.5);(60,0) **@{-}, 
(62.5,9);(64.4,6.6) **@{-}, 
(64.4,6.6);(70,0) **@{-}, 
(61.5,10.2);(60,12) **@{-},
(69.5,11) *{\urcorner}, 
(69,1.1) *{\lrcorner},  
(58,7) *{\llcorner},
(-5,6)*{\Gamma_3:},
\endxy}

\vskip.3cm

 \centerline{ \xy 
 (7,6);(23,6)  **\crv{(15,-2)}, 
 (10,0);(11.5,1.8) **@{-},
(12.5,3);(20,12) **@{-}, 
(10,12);(17.5,3) **@{-}, 
(18.5,1.8);(20,0) **@{-}, 
(13,6);(17,6) **@{-}, (13,6.1);(17,6.1) **@{-}, (13,5.9);(17,5.9)
**@{-}, (13,6.2);(17,6.2) **@{-}, (13,5.8);(17,5.8) **@{-},
  (13,3.1) *{\urcorner}, (17.5,8.9) *{\llcorner}, (19,1.1) *{\lrcorner},  (21,3.5) *{\urcorner},(13,8) *{\ulcorner},
(35,7);(45,7) **@{-} ?>*\dir{>}, 
(35,5);(45,5) **@{-} ?<*\dir{<},
(57,6);(73,6)  **\crv{(65,14)}, 
(70,12);(68.5,10.2) **@{-},
(67.5,9);(60,0) **@{-}, 
(70,0);(62.5,9) **@{-}, 
(61.5,10.2);(60,12) **@{-}, 
(63,6);(67,6) **@{-}, (63,6.1);(67,6.1) **@{-}, (63,5.9);(67,5.9)
**@{-}, (63,6.2);(67,6.2) **@{-}, (63,5.8);(67,5.8) **@{-},
(62,2) *{\urcorner}, (67,8.3) *{\llcorner}, (67.5,3) *{\lrcorner},  (70.9,8) *{\lrcorner},(61.3,10) *{\ulcorner}, 
(-5,6)*{\Gamma_4:},
\endxy}

\vskip.3cm

 \centerline{ \xy 
  (13,2.2);(17,2.2)  **\crv{(15,1.7)}, 
  (7,6);(11,3)  **\crv{(10,3.5)}, 
  (23,6);(19,3)  **\crv{(20,3.5)}, 
 (10,0);(20,12) **@{-}, 
(10,12);(20,0) **@{-}, 
(13,6);(17,6) **@{-}, (13,6.1);(17,6.1) **@{-}, (13,5.9);(17,5.9)
**@{-}, (13,6.2);(17,6.2) **@{-}, (13,5.8);(17,5.8) **@{-}, 
(13,3.1) *{\urcorner}, (17.5,8.9) *{\llcorner}, (19,1.1) *{\lrcorner},  (21,3.5) *{\urcorner},(13,8) *{\ulcorner},
(35,7);(45,7) **@{-} ?>*\dir{>}, 
(35,5);(45,5) **@{-} ?<*\dir{<},
   (63,9.8);(67,9.8)  **\crv{(65,10.3)}, 
  (57,6);(61,9)  **\crv{(60,8.5)}, 
  (73,6);(69,9)  **\crv{(70,8.5)}, 
(70,12);(60,0) **@{-}, 
(70,0);(60,12) **@{-}, 
(63,6);(67,6) **@{-}, (63,6.1);(67,6.1) **@{-}, (63,5.9);(67,5.9)
**@{-},(63,6.2);(67,6.2) **@{-}, (63,5.8);(67,5.8) **@{-}, 
(62,2) *{\urcorner}, (67,8.3) *{\llcorner}, (67.5,3) *{\lrcorner},  (70.9,8) *{\lrcorner},(61.3,10) *{\ulcorner}, 
(-5,6)*{\Gamma'_4:},
\endxy}

\vskip.3cm

\centerline{ \xy (9,2);(13,6) **@{-}, (9,6);(10.5,4.5) **@{-},
(11.5,3.5);(13,2) **@{-}, (17,2);(21,6) **@{-}, (17,6);(21,2)
**@{-}, (13,6);(17,6) **\crv{(15,8)}, (13,2);(17,2) **\crv{(15,0)},
(7,7);(9,6) **\crv{(8,7)}, (7,1);(9,2) **\crv{(8,1)}, (23,7);(21,6)
**\crv{(22,7)}, (23,1);(21,2) **\crv{(22,1)}, 
(17,4);(21,4) **@{-}, (17,4.1);(21,4.1) **@{-}, (17,3.9);(21,3.9)
**@{-}, (17,4.2);(21,4.2) **@{-}, (17,3.8);(21,3.8) **@{-},
(10,3) *{\llcorner},  (12,3) *{\lrcorner}, (21,6) *{\llcorner},(21,2.2) *{\lrcorner},
(35,5);(45,5) **@{-} ?>*\dir{>}, (35,3);(45,3) **@{-} ?<*\dir{<},
(59,2);(63,6) **@{-}, (59,6);(63,2) **@{-}, (67,2);(71,6) **@{-},
(67,6);(68.5,4.5) **@{-}, (69.5,3.5);(71,2) **@{-}, (63,6);(67,6)
**\crv{(65,8)}, (63,2);(67,2) **\crv{(65,0)}, (57,7);(59,6)
**\crv{(58,7)}, (57,1);(59,2) **\crv{(58,1)}, (73,7);(71,6)
**\crv{(72,7)}, (73,1);(71,2) **\crv{(72,1)}, 
(63,4);(59,4) **@{-}, (63,4.1);(59,4.1) **@{-}, (63,3.9);(59,3.9)
**@{-}, (63,4.2);(59,4.2) **@{-}, (63,3.8);(59,3.8) **@{-},
(59.5,2.5) *{\llcorner},  (59,6) *{\lrcorner}, (71,6) *{\llcorner},(71,2.2) *{\lrcorner},
 (-5,4)*{\Gamma_5:},
\endxy}

\vskip.3cm

\centerline{ 
\xy (12,6);(16,2) **@{-}, (12,2);(16,6) **@{-},
(16,6);(22,6) **\crv{(18,8)&(20,8)}, (16,2);(22,2)
**\crv{(18,0)&(20,0)}, (22,6);(22,2) **\crv{(23.5,4)}, (7,8);(12,6)
**\crv{(10,8)}, (7,0);(12,2) **\crv{(10,0)}, (11,0.4) *{\urcorner}, (11,6) *{\ulcorner},(21.5,1)*{\urcorner},
(35,5);(45,5) **@{-} ?>*\dir{>}, (35,3);(45,3) **@{-} ?<*\dir{<},
(57,8);(57,0) **\crv{(67,7)&(67,1)}, (-5,4)*{\Gamma_6 :}, (73,4)*{},
(14,6);(14,2) **@{-}, (14.1,6);(14.1,2) **@{-}, (13.9,6);(13.9,2)
**@{-}, (14.2,6);(14.2,2) **@{-}, (13.8,6);(13.8,2) **@{-}, 
(63.8,2) *{\urcorner},
\endxy}

\vskip.3cm

\centerline{ \xy (12,6);(16,2) **@{-}, (12,2);(16,6) **@{-},
(16,6);(22,6) **\crv{(18,8)&(20,8)}, (16,2);(22,2)
**\crv{(18,0)&(20,0)}, (22,6);(22,2) **\crv{(23.5,4)}, (7,8);(12,6)
**\crv{(10,8)}, (7,0);(12,2) **\crv{(10,0)}, (11,0.4) *{\urcorner}, (11,6) *{\ulcorner},(21.5,1)*{\urcorner},
(35,5);(45,5) **@{-} ?>*\dir{>}, (35,3);(45,3) **@{-} ?<*\dir{<},
(57,8);(57,0) **\crv{(67,7)&(67,1)}, (-5,4)*{\Gamma'_6 :},
(73,4)*{}, (12,4);(16,4) **@{-}, (12,4.1);(16,4.1) **@{-},
(12,4.2);(16,4.2) **@{-}, (12,3.9);(16,3.9) **@{-},
(12,3.8);(16,3.8) **@{-}, (63.8,2) *{\urcorner},
\endxy}

\vskip.3cm

\centerline{ \xy (9,4);(17,12) **@{-}, (9,8);(13,4) **@{-},
(17,12);(21,16) **@{-}, (17,16);(21,12) **@{-}, (7,0);(9,4)
**\crv{(7,2)}, (7,12);(9,8) **\crv{(7,10)}, (15,0);(13,4)
**\crv{(15,2)}, (17,16);(15,20) **\crv{(15,18)}, (21,16);(23,20)
**\crv{(23,18)}, (21,12);(23,8) **\crv{(23,10)}, (7,12);(7,20)
**@{-}, (23,8);(23,0) **@{-},
(11,4);(11,8) **@{-}, 
(10.9,4);(10.9,8) **@{-}, 
(11.1,4);(11.1,8) **@{-}, 
(10.8,4);(10.8,8) **@{-}, 
(11.2,4);(11.2,8) **@{-},
(17,14);(21,14) **@{-}, 
(17,14.1);(21,14.1) **@{-},
(17,13.9);(21,13.9) **@{-}, 
(17,14.2);(21,14.2) **@{-},
(17,13.8);(21,13.8) **@{-},
(7,15) *{\wedge},(23,5) *{\wedge},(15,10) *{\llcorner},(8,2.3) *{\urcorner}, (21.5,16) *{\urcorner},(16,17) *{\lrcorner},(13.7,3) *{\lrcorner},
(35,11);(45,11) **@{-} ?>*\dir{>}, (35,9);(45,9) **@{-} ?<*\dir{<},
(71,4);(63,12) **@{-}, (71,8);(67,4) **@{-}, (63,12);(59,16) **@{-},
(63,16);(59,12) **@{-}, (73,0);(71,4) **\crv{(73,2)}, (73,12);(71,8)
**\crv{(73,10)}, (65,0);(67,4) **\crv{(65,2)}, (63,16);(65,20)
**\crv{(65,18)}, (59,16);(57,20) **\crv{(57,18)}, (59,12);(57,8)
**\crv{(57,10)}, (73,12);(73,20) **@{-}, (57,8);(57,0) **@{-},
(61,12);(61,16) **@{-}, 
(61.1,12);(61.1,16) **@{-},
(60.9,12);(60.9,16) **@{-}, 
(61.2,12);(61.2,16) **@{-},
(60.8,12);(60.8,16) **@{-},
(57,5) *{\wedge},(73,15) *{\wedge},(65,10) *{\lrcorner},(58,17) *{\ulcorner}, (71.5,3) *{\ulcorner},(66.3,3) *{\llcorner},(63.7,16.8) *{\llcorner},
(67,6);(71,6) **@{-}, 
(67,6.1);(71,6.1) **@{-}, 
(67,5.9);(71,5.9) **@{-}, 
(67,6.2);(71,6.2) **@{-},
(67,5.8);(71,5.8) **@{-},  
(-5,10)*{\Gamma_7:}, 
 \endxy}

\vskip.3cm

\centerline{ 
\xy (7,20);(14.2,11) **@{-}, (15.8,9);(17.4,7) **@{-},
(19,5);(23,0) **@{-}, (13,20);(7,12) **@{-}, (7,12);(11.2,7) **@{-},
(12.7,5.2);(14.4,3.2) **@{-}, (15.7,1.6);(17,0) **@{-},
(17,20);(23,12) **@{-}, (13,0);(23,12) **@{-}, (7,0);(23,20) **@{-},
(10,18);(10,14) **@{-}, (10.1,18);(10.1,14) **@{-},
(9.9,18);(9.9,14) **@{-}, (10.2,18);(10.2,14) **@{-},
(9.8,18);(9.8,14) **@{-}, (18,16);(22,16) **@{-},
(18,16.1);(22,16.1) **@{-}, (18,15.9);(22,15.9) **@{-},
(18,16.2);(22,16.2) **@{-}, (18,15.8);(22,15.8) **@{-},
 (8.5,17.7) *{\ulcorner}, (18.5,17.7) *{\ulcorner}, (9.1,9) *{\ulcorner}, (12,18.4) *{\llcorner}, (14.5,1.6) *{\llcorner}, (21.8,18.4) *{\llcorner}, (17,12) *{\urcorner}, (21.7,1.6) *{\lrcorner},
(35,11);(45,11) **@{-} ?>*\dir{>}, (35,9);(45,9) **@{-} ?<*\dir{<},
(73,20);(65.8,11) **@{-}, (64.2,9);(62.6,7) **@{-}, (61,5);(57,0)
**@{-}, (67,20);(73,12) **@{-}, (73,12);(68.8,7) **@{-},
(67.3,5.2);(65.6,3.2) **@{-}, (64.3,1.6);(63,0) **@{-},
(63,20);(57,12) **@{-}, (67,0);(57,12) **@{-}, (73,0);(57,20)
**@{-},
 (58.5,17.7) *{\ulcorner}, (68.5,17.7) *{\ulcorner}, (59.1,9) *{\ulcorner}, (62,18.4) *{\llcorner}, (64,1.2) *{\llcorner}, (71.8,18.4) *{\llcorner}, (67,12) *{\urcorner}, (71.7,1.6) *{\lrcorner},
(60,18);(60,14) **@{-}, (60.1,18);(60.1,14) **@{-},
(59.9,18);(59.9,14) **@{-}, (60.2,18);(60.2,14) **@{-},
(59.8,18);(59.8,14) **@{-}, (68,16);(72,16) **@{-},
(68,16.1);(72,16.1) **@{-}, (68,15.9);(72,15.9) **@{-},
(68,16.2);(72,16.2) **@{-}, (68,15.8);(72,15.8) **@{-},
(-5,10)*{\Gamma_{8}:}, 
\endxy}
\caption{A generating set of oriented Yoshikawa moves}
\label{fig-moves-type-II-o}
\end{center}
\end{figure} 

On many occasions it is necessary to minimize the number of Yoshikawa moves on marked graph diagrams when one checks that a certain function from marked graph diagrams defines a surface-link invariant. 
A collection $S$ of unoriented (oriented, resp.) Yoshikawa moves is called a {\it generating set} of unoriented (oriented, resp.) Yoshikawa moves if any unoriented (oriented, resp.) Yoshikawa move is obtained by a finite sequence of plane isotopies and the moves in the set $S$. 

 The purpose of this paper is to provide some generating sets of Yoshikawa moves on marked graph diagrams representing unoriented surface-links, and also oriented surface-links. The Main Theorems are the following:

\begin{theorem}\label{main-thm-1-1}
Let 
$\mathfrak S=\{\Omega_1, \Omega_2, \Omega_3, \Omega_{4}, \Omega'_{4}, \Omega_{5}, \Omega_6, \Omega'_6, \Omega_7, \Omega_8\},$
the set of the moves illustrated in Fig.~\ref{fig-moves-type-II}. This is a generating set of unoriented Yoshikawa moves.
\end{theorem}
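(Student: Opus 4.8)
The plan is to verify directly that every unoriented Yoshikawa move is obtained from a finite sequence of plane isotopies and moves of $\mathfrak S$. Since $\mathfrak S$ already contains one representative of each of the ten deformation types $\Omega_1,\dots,\Omega_8,\Omega'_4,\Omega'_6$, what remains is to realize: the mirror image of each member of $\mathfrak S$; the auxiliary moves $\Omega_{4a},\Omega'_{4a},\Omega_{5a},\Omega_{5b},\Omega_{5c},\Omega_{8a}$ of Fig.~\ref{fig-r45678n} together with their mirrors; and the Reidemeister-type moves of Fig.~\ref{fig-rmove} in all of their variants. I would carry this out case by case, organized into (A) the Reidemeister-type moves $\Omega_1,\Omega_2,\Omega_3$ and (B) the marked-vertex moves.

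For group (A): every variant of $\Omega_2$ and every realizable variant of $\Omega_3$ is produced from the chosen $\Omega_2$ and $\Omega_3$ by the classical device of creating an auxiliary bigon with $\Omega_2$, sliding the third strand across it, applying the base move, and deleting the bigon; this accounts for all the $\Omega_2,\Omega_3$ pictures in Fig.~\ref{fig-rmove}. The one Reidemeister variant that cannot be obtained from $\Omega_1,\Omega_2,\Omega_3$ alone is the kink of the opposite sign, i.e.\ the mirror of $\Omega_1$, because of the writhe-type obstruction of classical diagram calculus. To get it I would leave the classical world: convert the kinked arc into a marked vertex carrying a small trivial loop by $\Omega_6$ (or $\Omega'_6$), reposition that loop around the vertex using $\Omega_4,\Omega'_4,\Omega_5$ --- which is possible precisely because a marked vertex together with its marker is its own mirror image --- and then apply $\Omega_6$ (or $\Omega'_6$) in reverse to recover the oppositely kinked arc. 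Once both kinks are available, every remaining chirality issue reduces to (A)-type sliding arguments.

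For group (B): each of $\Omega_{4a},\Omega'_{4a}$ and the mirrors of $\Omega_4,\Omega'_4$ is a strand running over or under a marked vertex along a different pair of the four incident edges, so one first brings the strand into the standard position by $\Omega_2$-moves (interchanging the roles of $\Omega_4$ and $\Omega'_4$ whenever the over/under datum of the strand must be reversed), applies the base move, and undoes the preparation. The moves $\Omega_{5a},\Omega_{5b},\Omega_{5c}$ and the mirror of $\Omega_5$ come from $\Omega_5$ by transporting the extra crossing to the required incident edge of the marked vertex via $\Omega_2,\Omega_3$, and, when the marker orientation must change, via $\Omega_6$ and $\Omega'_6$, which together make the two marker orientations interchangeable in the presence of a trivial loop. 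The mirrors of $\Omega_7,\Omega_8$ and the auxiliary $\Omega_{8a}$ are handled in the same spirit, moving the extra crossings and the marked vertex or vertices into the standard configuration of $\Omega_7$ or $\Omega_8$ by $\Omega_2,\Omega_3$ together with $\Omega_4,\Omega'_4,\Omega_5$, applying that move, and reversing the preparatory steps.

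I expect the main obstacles to be, first, the mirror of $\Omega_1$: unlike every other verification, which is a sliding argument confined to a neighbourhood of a marked vertex or to a classical Reidemeister patch, this one genuinely requires passing through a marked vertex, and keeping track of the intermediate loop is the delicate point; and second, the $\Omega_8$-family (the mirror of $\Omega_8$ and $\Omega_{8a}$), whose diagrams are intricate enough that writing out the required $\mathfrak S$-sequence is lengthy and error-prone. With the mirror of $\Omega_1$ in hand, the remaining mirrors follow from the sliding arguments above, which finishes the proof.
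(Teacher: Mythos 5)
Your overall strategy --- reduce every unoriented Yoshikawa move not in $\mathfrak S$ to a finite sequence of plane isotopies and moves of $\mathfrak S$, treating the Reidemeister-type moves and the marked-vertex moves separately --- is the same decomposition the paper uses (its Lemmas \ref{main-thm-3-2-lem1}--\ref{main-thm-3-2-lem5} carry out your group (B) by explicit figures). However, your treatment of group (A) contains a genuine error. You assert that the opposite kink $\Omega_{1a}$ ``cannot be obtained from $\Omega_1,\Omega_2,\Omega_3$ alone\dots because of the writhe-type obstruction,'' and you then build a detour through $\Omega_6$, $\Omega_4$, $\Omega'_4$, $\Omega_5$ to produce it. The premise is false: the writhe is an obstruction only for regular isotopy ($\Omega_2,\Omega_3$ alone); once one kink move is available the writhe is no longer invariant, and $\Omega_{1a}$ is in fact generated by $\Omega_1$, $\Omega_2$ and plane isotopy. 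Concretely, an $\Omega_2$ move applied to a strand and a nearby parallel piece of itself creates two self-crossings of opposite signs, which after plane isotopy appear as two adjacent kinks of opposite signs; deleting the positive one by $\Omega_1$ leaves exactly the negative kink. This is the classical fact, recalled in Section~\ref{sect-mg-crm}, that $\{\Omega_1,\Omega_2,\Omega_3\}$ is a minimal generating set of the unoriented Reidemeister moves, and it is precisely what the paper's proof invokes. Your marked-vertex workaround is therefore unnecessary, and moreover unjustified as written: $\Omega_6$ and $\Omega'_6$ do not produce ``a marked vertex carrying a small trivial loop'' that can be freely repositioned by $\Omega_4,\Omega'_4,\Omega_5$, and you exhibit no sequence showing that the composite you describe has the net effect of $\Omega_{1a}$ rather than some other local change. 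Since you route all ``remaining chirality issues'' through this step, the error propagates.

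For group (B) your case analysis is consistent with the paper's ($\Omega_{4a}$ from $\Omega_2,\Omega_4$; $\Omega'_{4a}$ from $\Omega_2,\Omega'_4$; the $\Omega_5$-family from $\Omega_2,\Omega_3,\Omega_5$ and the type-$4$ moves; $\Omega_{8a}$ from $\Omega_4,\Omega'_4,\Omega_8$), but it remains a plan: none of the actual move sequences is exhibited, and the one place where you commit to a specific mechanism --- using $\Omega_6,\Omega'_6$ to ``make the two marker orientations interchangeable'' inside the $\Omega_5$-family --- is not something the paper needs and is not verified. To complete the argument you must either draw the explicit sequences (as the paper does in Figs.~\ref{fig-m4}--\ref{fig-m8}) or cite them, and you must replace your $\Omega_{1a}$ step by the classical two-move derivation above.
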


\begin{theorem}\label{main-thm-1-2}
Let
$\mathfrak S_1=\{\Gamma_1, \Gamma'_1, \Gamma_2, \Gamma_3,\Gamma_4, \Gamma'_4, \Gamma_5, \Gamma_6, \Gamma'_6, \Gamma_7, \Gamma_8\},$
the set of the moves illustrated in Fig.~\ref{fig-moves-type-II-o}. This is a generating set of oriented Yoshikawa moves.
\end{theorem}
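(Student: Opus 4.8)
The plan is to prove that every oriented Yoshikawa move is a composition of plane isotopies and moves from $\mathfrak S_1$, by first reducing the problem, via Theorem~\ref{main-thm-1}, to a purely local statement. Let $M\colon D\to D'$ be an arbitrary oriented Yoshikawa move. Forgetting orientations, $D$ and $D'$ are related by the underlying unoriented Yoshikawa move, which by Theorem~\ref{main-thm-1} factors as a finite sequence of plane isotopies and moves from $\mathfrak S=\{\Omega_1,\dots,\Omega_8\}$. A fixed orientation on $D$ propagates along such a sequence to a coherent orientation of every intermediate diagram; the one point to check is that no intermediate marked graph diagram can fail to carry the required orientation, and this rests on the \emph{rigidity of the orientation at a marked vertex}: up to the planar $\pi$--rotation, which does not change a move, a marked vertex carries only the orientation of Fig.~\ref{fig-ori-mg}. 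Lifting the factorization, $M$ becomes a sequence of \emph{oriented} moves, each an orientation of some $\Omega_j$ or of a mirror of some $\Omega_j$, so it suffices to show that \emph{every orientation of each of $\Omega_1,\Omega_2,\Omega_3,\Omega_4,\Omega'_4,\Omega_5,\Omega_6,\Omega'_6,\Omega_7,\Omega_8$, and of every mirror of these, is generated by $\mathfrak S_1$.}

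For the classical moves $\Omega_1,\Omega_2,\Omega_3$, which involve only ordinary crossings, the oriented variants are the oriented Reidemeister moves, and I would use the standard reductions among them, all of which take place among classical crossings and therefore remain valid here. The two orientations of a kink of a fixed sign are exchanged by the $\pi$--rotation, so $\Gamma_1$ and $\Gamma'_1$ yield every oriented $\Omega_1$ move and its mirror; one passes from one oriented $\Omega_2$ move to another by inserting and later deleting a canceling $\Gamma_1/\Gamma'_1$ kink (to switch between the parallel and antiparallel forms), so $\Gamma_2$ together with $\Gamma_1,\Gamma'_1$ yields every oriented $\Omega_2$ move and its mirror; and every oriented $\Omega_3$ move is obtained from $\Gamma_3$ together with the already available oriented $\Omega_2$ moves by an ``$\Omega_2$--detour'', i.e.\ by first using oriented $\Omega_2$ moves to reroute one of the three strands into the position to which $\Gamma_3$ applies, then applying $\Gamma_3$, then rerouting back.

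For the moves involving a marked vertex --- $\Omega_4,\Omega'_4,\Omega_5,\Omega_6,\Omega'_6,\Omega_7,\Omega_8$ --- rigidity of the vertex orientation again does most of the work: in any oriented version the vertex and the two strand--germs through it are oriented as in Fig.~\ref{fig-ori-mg} up to a plane isotopy, so the only remaining freedom is (a) the orientations of the classical strands not incident to the vertex, (b) the over/under data at the classical crossings, and (c) plane--isotopic repositioning. Variants of type (c) are immediate. For type (b), once the type (a) variants are in hand, the mirror versions follow because the reduction used to prove Theorem~\ref{main-thm-1} produces a mirror move from $\mathfrak S$ without reference to orientation and hence lifts verbatim. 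For type (a) I would reverse the orientation of an offending strand by a detour: isotope that strand so that in the local disk it becomes a short embedded arc, reroute it across the rest of the picture using $\Gamma_2$ (and a $\Gamma_1/\Gamma'_1$ kink when a parallel/antiparallel adjustment is needed), apply the reference move $\Gamma_j$ (respectively $\Gamma'_4$ or $\Gamma'_6$) in the new position, and then undo the auxiliary moves.

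The hard part will be carrying out (a) for $\Omega_7$ and $\Omega_8$, the moves with the most strands and marked vertices, where the rerouting detour must be performed without disturbing the rest of the local tangle and one must check that every intermediate diagram is genuinely related to its neighbours by a move in $\mathfrak S_1$ and a plane isotopy, with no hidden orientation obstruction. To control this I would first prove an auxiliary lemma on reversing the orientation of a strand in a neighbourhood of a single marked vertex --- sliding a small kink through the vertex with $\Gamma_4$ (or $\Gamma_5$) and using $\Gamma_1,\Gamma'_1$ to create and destroy it --- and then feed this lemma into the $\Omega_7$ and $\Omega_8$ cases. Since only generation, not minimality, is claimed, it suffices to exhibit one such sequence for each oriented move, so the remaining work is careful but routine diagram bookkeeping.
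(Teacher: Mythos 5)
Your overall strategy coincides with the paper's: reduce to the unoriented case via Theorem~\ref{main-thm-1-1}, use the rigidity of the orientation at a marked vertex to see that only the orientations of the non-incident strands vary, enumerate the resulting oriented variants, and realize each one by moves in $\mathfrak S_1$. However, two gaps remain. First, your treatment of the classical moves fails as described: inserting a $\Gamma_1/\Gamma'_1$ kink and applying the antiparallel move $\Gamma_2$ does \emph{not} yield the parallel $\Omega_2$ move, because the two new crossings lie between the second strand and the return arc of the kink, and extracting them (so that the kink can be removed again) requires an $\Omega_3$ move. This is exactly the subtlety behind \"Ostlund's and Polyak's results; the correct derivation of the remaining oriented $\Omega_2$ moves from $\{\Gamma_1,\Gamma'_1,\Gamma_2,\Gamma_3\}$ must use $\Gamma_3$, so your ordering (all $\Omega_2$ variants first, then $\Omega_3$ variants ``using the already available $\Omega_2$ moves'') has to be replaced by the intertwined argument of Theorem~\ref{thm-polyak}, which the paper simply cites.

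Second, and more seriously, for the moves involving marked vertices the entire mathematical content of the theorem is the explicit realization of the finitely many genuinely distinct oriented variants $\Gamma_{4a}$, $\Gamma'_{4a}$, $\Gamma_{5a}$, $\Gamma_{6a}$, $\Gamma'_{6a}$, $\Gamma_{7a}$, $\Gamma_{8a}$, $\Gamma_{8b}$, $\Gamma_{8c}$ of Fig.~\ref{fig-om2}, which the paper supplies as concrete move sequences in Lemmas \ref{main-thm-3-3-lem1}--\ref{main-thm-3-3-lem9}. Your ``detour'' heuristic of rerouting a wrongly oriented strand with $\Gamma_2$ and kinks does not by itself produce these sequences: the paper's realization of $\Gamma_{4a}$ unavoidably passes through $\Gamma_5$ (repositioning the marked vertex relative to a crossing, not a rerouting of a free strand), and the realizations of $\Gamma_{7a}$, $\Gamma_{8b}$ and $\Gamma_{8c}$ are long chains threaded through the auxiliary move $\Gamma_{5a}$, which itself requires a separate derivation (Lemma \ref{main-thm-3-3-lem4}). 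Deferring precisely these cases as ``routine diagram bookkeeping'' leaves the essential step unverified; nothing in your text lets a reader check that, say, $\Gamma_{8b}$ is actually generated by $\mathfrak S_1$.
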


\begin{theorem}\label{main-thm-1-3}
Let 
$\mathfrak S_2=\{\Gamma_1, \Gamma_{1a}, \Gamma_{2b}, \Gamma_{2c}, \Gamma_{3a}, \Gamma_4, \Gamma'_4, \Gamma_5, \Gamma_6, \Gamma'_6, \Gamma_7, \Gamma_8\},$
where $\Gamma_{1a},$ $\Gamma_{2b},$ $\Gamma_{2c}$, and $\Gamma_{3a}$ are shown in Fig.~\ref{fig-r123o}. This is a generating set of oriented Yoshikawa moves.
\end{theorem}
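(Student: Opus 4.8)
The plan is to deduce this from Theorem~\ref{main-thm-1-2} rather than to argue from scratch. First I would check that $\mathfrak S_2$ really is a set of oriented Yoshikawa moves: by definition the moves $\Gamma_{1a}$, $\Gamma_{2b}$, $\Gamma_{2c}$, $\Gamma_{3a}$ of Fig.~\ref{fig-r123o} are particular oriented versions of the unoriented moves $\Omega_1$, $\Omega_2$, $\Omega_2$, $\Omega_3$, and the remaining eight members of $\mathfrak S_2$ already belong to $\mathfrak S_1$; so the question ``is $\mathfrak S_2$ a generating set?'' is well posed. Next, since ``being obtainable by a finite sequence of plane isotopies and moves from a fixed set'' is a transitive relation, and since $\mathfrak S_1$ is a generating set by Theorem~\ref{main-thm-1-2}, it is enough to show that every move of $\mathfrak S_1$ is obtainable from $\mathfrak S_2$. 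The moves $\Gamma_1,\Gamma_4,\Gamma'_4,\Gamma_5,\Gamma_6,\Gamma'_6,\Gamma_7,\Gamma_8$ all lie in $\mathfrak S_2$, so the whole problem reduces to realizing the three moves $\Gamma'_1$, $\Gamma_2$ and $\Gamma_3$ by finite sequences of plane isotopies and moves from $\mathfrak S_2$.

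The key observation is that $\Gamma'_1$, $\Gamma_2$, $\Gamma_3$ involve only classical crossings --- no marked vertices --- so these three realizations are entirely ``classical'': they amount to expressing three particular oriented Reidemeister moves in terms of the oriented Reidemeister moves $\Gamma_1,\Gamma_{1a}$ (two of type $\Omega_1$), $\Gamma_{2b},\Gamma_{2c}$ (two of type $\Omega_2$) and $\Gamma_{3a}$ (one of type $\Omega_3$), which is precisely the shape of a minimal generating set of oriented Reidemeister moves in the sense studied by Polyak. Concretely I would proceed as follows. For $\Gamma'_1$, create an auxiliary kink with $\Gamma_1$ (or $\Gamma_{1a}$), slide it around the given curl by one of $\Gamma_{2b},\Gamma_{2c}$, and cancel the surplus crossings by another $\Omega_1$-type move, checking that the orientations left behind are exactly those of $\Gamma'_1$. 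For $\Gamma_2$, insert and later delete a curl (via $\Gamma_1/\Gamma_{1a}$) so as to convert the strand orientations appearing in $\Gamma_2$ into those occurring in $\Gamma_{2b}$ and $\Gamma_{2c}$, thereby writing $\Gamma_2$ as a composite of $\Gamma_{2b},\Gamma_{2c}$ and $\Omega_1$-type moves. For $\Gamma_3$, run the standard ``triangle'' argument, using $\Gamma_{2b}$ and $\Gamma_{2c}$ to push the three strands past one another in the required cyclic order and $\Gamma_1,\Gamma_{1a}$ to reconcile the crossing signs, arriving at $\Gamma_3$. Each such sequence is supported in a disk, hence is a legitimate local deformation, and I would display the three sequences explicitly in a figure.

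I expect the main obstacle to be the orientation bookkeeping in the third realization, together with the need to fit the chosen orientation conventions together. One must verify that the particular third Reidemeister variant $\Gamma_{3a}$ selected in Fig.~\ref{fig-r123o} is one of the oriented $\Omega_3$-moves from which the others are derivable --- not every single oriented third Reidemeister move has this property --- and that the two available $\Omega_2$-moves $\Gamma_{2b},\Gamma_{2c}$ are ``independent'' enough to drive the triangle argument; this is where all the orientation arrows have to be tracked with care and where the only genuinely nontrivial sequence of moves occurs. Once the three realizations are in hand, transitivity finishes the argument: $\mathfrak S_2$ generates each move of $\mathfrak S_1$, and $\mathfrak S_1$ generates every oriented Yoshikawa move by Theorem~\ref{main-thm-1-2}, hence so does $\mathfrak S_2$.
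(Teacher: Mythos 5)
Your proposal is correct and follows essentially the same route as the paper: reduce to Theorem \ref{main-thm-1-2} by observing that the non-classical moves of $\mathfrak S_1$ already lie in $\mathfrak S_2$, so that it only remains to generate the classical moves $\Gamma'_1,\Gamma_2,\Gamma_3$ from $\Gamma_1,\Gamma_{1a},\Gamma_{2b},\Gamma_{2c},\Gamma_{3a}$. The one difference is that the paper settles this last classical step by simply invoking Polyak's result (Theorem \ref{thm-polyak-1}, already stated in Section \ref{sect-mg-crm}, since $(\Gamma_1,\Gamma_{1a})$ is one of the admissible pairs), so the explicit kink-and-triangle constructions you sketch, while plausible, are exactly a re-derivation of that cited theorem and need not be carried out.
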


We remark that there are two types of oriented Reidemeister moves $\Omega_3$ and $\Omega_{3a}$ of type III. One is a {\it cyclic} $\Omega_3$ (or $\Omega_{3a}$) move of which the central triangle have a cyclic orientation. The other is a {\it non-cyclic} $\Omega_3$ (or $\Omega_{3a}$) move of which the central triangle does not have a cyclic orientation. Theorem \ref{main-thm-1-2} gives a generating set $\mathfrak S_1$ of oriented Yoshikawa moves that contains a cyclic 
$\Omega_3$ move $\Gamma_3$. While Theorem \ref{main-thm-1-3} gives a generating set $\mathfrak S_2$ of oriented Yoshikawa moves that contains a non-cyclic 
$\Omega_{3a}$ move $\Gamma_{3a}$ with all three positive crossings. 

The rest of this paper is organized as follows. In Section \ref{sect-mg-crm}, we review minimal generating set of classical Reidemeister moves. In Section \ref{sect-omgr-osl}, we recall the marked graph representation of surface-links. In Section \ref{sect-gs-uoym}, we prove Theorem \ref{main-thm-1-1}. In Section \ref{sect-gs-oym}, we prove Theorem \ref{main-thm-1-2} and Theorem \ref{main-thm-1-3}. In Section \ref{sect-ind-ym}, we discuss independence of certain Yoshikawa moves from the other moves.


\section{Minimal generating sets of Reidemeister moves}
\label{sect-mg-crm}

In this section we recall minimal generating sets of classical Reidemeister moves from \cite{Po}. K. Reidemeister introduced three types of local moves on link diagrams and showed that any two link diagrams representing the same link are transformed into each other by a finite sequence of plane isotopies and the local moves as shown in Fig.~\ref{fig-rmove} (cf. \cite{Re}), nowadays called Reidemeister moves.

\begin{figure}[ht]
\centerline{
\xy (0,0);(4,7) **\crv{(2,7)}, (4,3);(4,7)
**\crv{(6,3)&(6,7)}, (0,10);(1.5,6) **\crv{(0.2,6.5)},
(2.5,4);(4,3) **\crv{(3,3.1)},
\endxy
\xy (2,5);(6,5) **@{-} ?>*\dir{>} ?<*\dir{<},
(3,-3) *{\Omega_1},(0,0)*{},
\endxy
\hskip 0.2cm
\xy (0,0);(0,10) **\crv{(2,5)},
\endxy
\xy (2,5);(6,5) **@{-} ?>*\dir{>} ?<*\dir{<},
(5,-3) *{\Omega_{1a}},(0,0)*{},
\endxy
\hskip 0.2cm
\xy (0,10);(4,3) **\crv{(2,3)}, (4,7);(4,3)
**\crv{(6,7)&(6,3)}, (0,0);(1.5,4) **\crv{(0.2,3.5)},
(2.5,6);(4,7) **\crv{(3,6.9)},
\endxy
\quad
\xy (0,0);(0,10) **\crv{(9,5)}, (6,10);(3.8,8.5) **\crv{(5,9.5)},
(6,0);(3.8,1.5) **\crv{(5,0.5)}, (2.3,2.8);(2.3,7.2)
**\crv{(0.5,5)},
\endxy
\xy (2,5);(6,5) **@{-} ?>*\dir{>} ?<*\dir{<},
(4,-3) *{\Omega_2},(0,0)*{},
\endxy
\xy (0,0);(0,10) **\crv{(4,5)}, 
(6,0);(6,10) **\crv{(2,5)},
\endxy
\quad
\xy (0,5);(10,5) **\crv{(5,0)}, (7.5,4);(2,10) **\crv{(6.7,8)},
(2,0);(2.2,2.2) **\crv{(2,1)}, (8,0);(7.8,2.2) **\crv{(8,1)},
(2.5,4);(4.4,7.7) **\crv{(3,6.5)}, (5.8,8.8);(8,10) **\crv{(6,9)},
\endxy
\xy (2,5);(6,5) **@{-} ?>*\dir{>} ?<*\dir{<},
(4,-3) *{\Omega_3},(0,0)*{},
\endxy
\hskip 0.2cm
\xy (0,5);(10,5) **\crv{(5,10)}, (2.5,6);(8,0) **\crv{(3.3,2)},
(2,10);(2.2,7.8) **\crv{(2,9)}, (8,10);(7.8,7.8) **\crv{(8,9)},
(7.5,6);(5.6,2.3) **\crv{(7,3.5)}, (4.2,1.2);(2,0) **\crv{(4,1)},
\endxy
 \quad
\xy (0,5);(10,5) **\crv{(5,0)}, (2.5,4);(8,10) **\crv{(3.3,8)},
(8,0);(7.8,2.2) **\crv{(8,1)}, (2,0);(2.2,2.2) **\crv{(2,1)},
(7.5,4);(5.6,7.7) **\crv{(7,6.5)}, (4.2,8.8);(2,10) **\crv{(4,9)},
\endxy
\xy (2,5);(6,5) **@{-} ?>*\dir{>} ?<*\dir{<},
(4,-3) *{\Omega_{3a}},(0,0)*{},
\endxy
\hskip 0.2cm
\xy (0,5);(10,5) **\crv{(5,10)}, (7.5,6);(2,0) **\crv{(6.7,2)},
(8,10);(7.8,7.8) **\crv{(8,9)}, (2,10);(2.2,7.8) **\crv{(2,9)},
(2.5,6);(4.4,2.3) **\crv{(3,3.5)}, (5.8,1.2);(8,0) **\crv{(6,1)},
\endxy
}\caption{Unoriented Reidemeister moves}
\label{fig-rmove}
\end{figure}
\begin{figure}[ht]
\begin{center}
\resizebox{0.80\textwidth}{!}{%
  \includegraphics{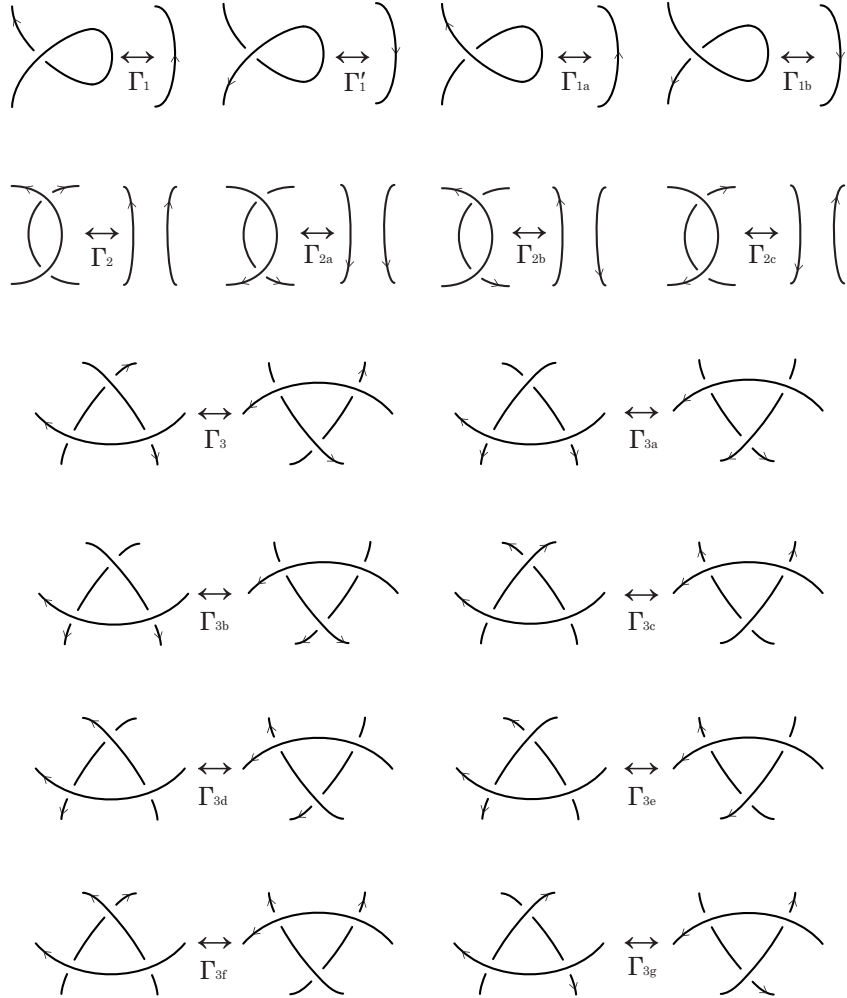} }
\caption{Oriented Reidemeister moves}
\label{fig-r123o}
\end{center}
\end{figure}

A collection $S$ of (oriented) Reidemeister moves is called a {\it generating set} if any (oriented) Reidemeister move is obtained by a finite sequence of plane isotopies and the moves in the collection $S$. A generating set $S$ is said to be {\it minimal} if $S$ does not properly contain any generating set.

We remind the reader that the Reidemeister moves $\Omega_{1}, \Omega_2, \Omega_{3}$ in Fig.~\ref{fig-rmove} is a minimal generating set of unoriented Reidemeister moves, that is, all other Reidemeister moves $\Omega_{1a}$ and $\Omega_{3a}$ in Fig.~\ref{fig-rmove} are obtained by a finite sequence of plane isotopies and the moves $\Omega_{1}, \Omega_2, \Omega_{3}$. We should remark that there are other minimal generating sets of unoriented Reidemeister moves. For example, the Reidemeister moves $\Omega_1, \Omega_2, \Omega_{3a}$ in Fig.~\ref{fig-rmove} is also a minimal generating set of unoriented Reidemeister moves, which seems to be considered in many literatures, for example \cite{AB,Ka2,Re}. 

To deal with oriented link diagrams, depending on orientations of arcs involved in the Reidemeister moves, one may distinguish four different versions of each of the move $\Omega_1$, $\Omega_2$, $\Omega_3$ and $\Omega_{3a}$. For the future use, we here list the oriented Reidemeister moves and name all of them as shown in Fig.~\ref{fig-r123o}. 

It is obvious that any generating set of oriented Reidemeister moves must contain at least one oriented move of each of the moves $\Omega_2$ and $\Omega_3$. In \cite{Ol}, Olof-Petter \"Ostlund showed that any generating set of oriented Reidemeister moves has to contain at least two oriented moves of the move $\Omega_1$. Hence any generating set of oriented Reidemeister moves has to contain at least four moves.
In \cite{Po}, M. Polyak introduced minimal generating sets of four oriented Reidemeister moves, which include two oriented $\Omega_1$ moves, one or two oriented $\Omega_2$ move and one oriented $\Omega_3$ move (either a cyclic $\Omega_3$  move or a non-cyclic $\Omega_3$ move): 

\begin{theorem}\cite[Theorem 1.1]{Po}\label{thm-polyak}
Let $D$ and $D'$ be two oriented link diagrams in $\mathbb R^2$ representing the same oriented link. Then one may pass from $D$ to $D'$ by a finite sequence of plane isotopies and four oriented Reidemeister moves $\Gamma_1, \Gamma'_1, \Gamma_2$ and $\Gamma_3$ in Fig.~\ref{fig-r123o}. 
\end{theorem}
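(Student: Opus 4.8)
The plan is to reduce the statement to a purely local assertion: \emph{every} oriented Reidemeister move is a consequence of $\Gamma_1$, $\Gamma'_1$, $\Gamma_2$, $\Gamma_3$ together with plane isotopies. Granting this, the theorem follows from the (unoriented) Reidemeister theorem recalled above. Indeed, if $D$ and $D'$ represent the same oriented link, then, forgetting orientations, they represent the same unoriented link, so by Fig.~\ref{fig-rmove} there is a finite sequence of plane isotopies and unoriented moves $\Omega_1,\Omega_2,\Omega_3$ carrying $D$ to $D'$. Now fix the orientation on $D$ and propagate it along this sequence: orientations of arcs are carried along by the plane isotopies, and inside the support disc of each elementary move the configuration acquires one of the finitely many \emph{oriented} versions of that move. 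The result is a sequence of plane isotopies and oriented versions of $\Omega_1,\Omega_2,\Omega_3$ from $D$ to $D'$. Hence it suffices to express each oriented version of $\Omega_1$, $\Omega_2$ and $\Omega_3$ in terms of $\Gamma_1,\Gamma'_1,\Gamma_2,\Gamma_3$; this is the content of \cite{Po} and is carried out in three steps, in the order $\Omega_1$, then $\Omega_2$, then $\Omega_3$.

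First I would dispatch the oriented $\Omega_1$ moves. Two of them are $\Gamma_1$ and $\Gamma'_1$ by definition; the two remaining ones are obtained from $\Gamma_1$ and $\Gamma'_1$ by the standard device of sliding a curl past a turnback, which is realized by a single application of $\Gamma_2$ followed by a plane isotopy. Next, for $\Omega_2$: the chosen move $\Gamma_2$ is the anti-parallel version with a specified overstrand, and the remaining three oriented $\Omega_2$ moves (the co-oriented version, and the versions with the other strand on top) are produced by the classical argument of surrounding one strand by a small kink via $\Gamma_1$ or $\Gamma'_1$, pushing the second strand through the resulting bigon with $\Gamma_2$, and then deleting the kink. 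Both of these reductions are routine once the local pictures are drawn explicitly, and after them all four oriented $\Omega_2$ moves are at our disposal.

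The main work, and what I expect to be the principal obstacle, is deriving \emph{all} oriented $\Omega_3$ moves from the single move $\Gamma_3$. Here I would proceed via two intermediate claims. The first is a height–independence lemma: for a fixed choice of orientations on the three strands, any oriented $\Omega_3$ move follows from any other oriented $\Omega_3$ move with the same three strand orientations, modulo the $\Omega_2$ moves already available; this is proved by the familiar trick of pushing a strand across a crossing with two $\Omega_2$ moves so as to exchange the over/under information on one of the three strands, and iterating to pass between over/under patterns. The second, more delicate, claim is to change the \emph{orientations} of the strands — in particular to pass between cyclically oriented and non-cyclically oriented triangles — which is accomplished by inserting opposing cap–cup kinks so that a strand locally turns around, applying $\Gamma_3$ together with the $\Omega_2$ moves to the modified configuration, and then removing the kinks. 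The bookkeeping required to verify that \emph{every} oriented $\Omega_3$ move (including the non-cyclic ones, which is what later makes possible a generating set built around $\Gamma_{3a}$) is reached by such manipulations is what makes Polyak's argument intricate; by contrast the $\Omega_1$ and $\Omega_2$ reductions above are essentially mechanical. Finally, the optimality of four moves — already noted in the text via \cite{Ol} and the necessity of at least one oriented $\Omega_2$ and one oriented $\Omega_3$ — shows that the generating set $\{\Gamma_1,\Gamma'_1,\Gamma_2,\Gamma_3\}$ cannot be shrunk, but this plays no role in the derivation itself.
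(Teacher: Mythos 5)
The paper does not prove this statement: it is quoted verbatim from Polyak \cite[Theorem~1.1]{Po}, so the only thing to compare your outline against is Polyak's published argument. Your first paragraph --- reducing the theorem to the purely local claim that the four moves generate all oriented Reidemeister moves, by lifting an unoriented Reidemeister sequence and orienting each elementary move --- is correct and is exactly the standard reduction. The overall order of attack ($\Omega_1$, then $\Omega_2$, then $\Omega_3$) also matches Polyak's in spirit, and your treatment of the $\Omega_3$ stage (conjugating by the already-derived $\Omega_2$ moves to change over/under patterns and strand orientations) is sound once all oriented $\Omega_2$ moves are in hand.

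The genuine gap is at the $\Omega_2$ stage. The move $\Gamma_2$ of Fig.~\ref{fig-r123o} is the co-oriented (parallel) version of $\Omega_2$, and the two antiparallel versions $\Gamma_{2b}$, $\Gamma_{2c}$ are \emph{not} obtainable from $\Gamma_1$, $\Gamma'_1$, $\Gamma_2$ by the kink trick you describe. Concretely: (i) after you put a curl on one strand and push the other strand through the resulting bigon, the two new crossings sit between the second strand and the \emph{curl}, not between the second strand and the main body of the first strand, so the local picture produced is not the one required by the antiparallel $\Omega_2$ move; and (ii) ``deleting the kink'' afterwards forces the curl's self-crossing to slide past the two newly created crossings, which is precisely an oriented $\Omega_3$ move in disguise. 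In Polyak's proof the antiparallel second moves are derived only with essential use of the cyclic third move $\Gamma_3$, and this is not an artifact of his presentation: Theorem~\ref{thm-polyak-1}, quoted in the same section, shows that any small generating set built on the non-cyclic move $\Gamma_{3a}$ must contain \emph{both} $\Gamma_{2b}$ and $\Gamma_{2c}$, i.e.\ these moves are genuinely not consequences of the first moves together with a single second move. Hence your claimed clean ordering --- all oriented $\Omega_2$ moves derived from $\Gamma_1,\Gamma'_1,\Gamma_2$ before $\Gamma_3$ ever enters --- cannot be carried out; the second- and third-move stages must be interleaved, with $\Gamma_3$ already appearing in the derivation of the antiparallel $\Omega_2$ moves.
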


\begin{theorem}\cite[Theorem 1.2]{Po}\label{thm-polyak-1}
Let $S$ be a set of at most five oriented Reidemeister moves containing the move $\Gamma_{3a}$. Then the set $S$ generates all oriented Reidemeister moves if and only if $S$ contains $\Gamma_{2b}$, $\Gamma_{2c}$ and contains one of the pairs $(\Gamma_{1}, \Gamma'_{1})$, $(\Gamma_{1}, \Gamma_{1a})$, 
$(\Gamma_{1b}, \Gamma'_{1})$, or $(\Gamma_{1a}, \Gamma_{1b})$. 
\end{theorem}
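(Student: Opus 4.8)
\noindent\emph{Proof idea.} The statement is M. Polyak's classification \cite[Theorem 1.2]{Po} and is quoted here for later use; the argument runs along the following lines. Work with oriented diagrams, for which the writhe $w$ and the Whitney index (rotation number) $r$ are integers. Every oriented $\Omega_2$ move, every oriented $\Omega_3$ move and $\Gamma_{3a}$ itself fixes the pair $(w,r)\in\mathbb Z^2$, while an oriented $\Omega_1$ move changes it by one of $(\pm1,\pm1)$; in particular, according to the conventions of Fig.~\ref{fig-r123o}, the two moves in one of the classes $\{\Gamma_1,\Gamma_{1b}\}$, $\{\Gamma'_1,\Gamma_{1a}\}$ move $(w,r)$ only along the line $\mathbb Z(1,1)$ and the two in the other class only along $\mathbb Z(1,-1)$.

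\emph{Necessity.} Suppose $|S|\le5$, $\Gamma_{3a}\in S$, and $S$ generates all oriented Reidemeister moves. By the remarks before Theorem~\ref{thm-polyak}, $S$ contains at least one oriented $\Omega_2$ move and, by \"Ostlund \cite{Ol}, at least two oriented $\Omega_1$ moves. If all $\Omega_1$ moves of $S$ lay in a single one of the two classes above, then the corresponding linear functional ($w-r$ or $w+r$) would be constant along every move of $S$ yet would change under the $\Omega_1$ moves of the other class, contradicting generation; hence $S$ contains one move from $\{\Gamma_1,\Gamma_{1b}\}$ and one from $\{\Gamma'_1,\Gamma_{1a}\}$, i.e.\ one of the four listed pairs. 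Now $\Gamma_{3a}$, two $\Omega_1$ moves and one $\Omega_2$ move already fill four of the at most five slots. One then checks that $\Gamma_{3a}$, a single oriented $\Omega_2$ move and arbitrarily many $\Omega_1$ moves cannot generate all four oriented $\Omega_2$ moves: for each choice of the $\Omega_2$ move one exhibits a semilocal functional on oriented (long) diagrams — a Gauss-diagram-type sum of local crossing weights valued in an abelian group — that is fixed by $\Gamma_{3a}$, by that $\Omega_2$ move and by every oriented $\Omega_1$ move, but is altered by some other $\Omega_2$ move. So $S$ must contain at least two $\Omega_2$ moves, whence $S$ is exactly $\Gamma_{3a}$, one of the four $\Omega_1$-pairs, and two $\Omega_2$ moves. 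Finally, for each of the five pairs of $\Omega_2$ moves other than $\{\Gamma_{2b},\Gamma_{2c}\}$ one produces a similar functional fixed by $\Gamma_{3a}$, by the admissible $\Omega_1$ moves and by both members of the pair but non-constant, ruling out that pair; this last step is precisely where it matters that the $\Omega_3$-type move in $S$ is the non-cyclic $\Gamma_{3a}$ and not a cyclic $\Omega_3$ move (with the latter a single $\Omega_2$ move already suffices, cf.\ Theorem~\ref{thm-polyak}). Hence $\{\Gamma_{2b},\Gamma_{2c}\}\subseteq S$.

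\emph{Sufficiency.} Conversely, let $\{\Gamma_{3a},\Gamma_{2b},\Gamma_{2c}\}\subseteq S$ with $S$ containing one of the four $\Omega_1$-pairs; we recover every oriented Reidemeister move in three steps. First, the remaining two oriented $\Omega_1$ moves: a kink on one branch reverses its local co-orientation, so the given $\Omega_1$-pair together with $\Gamma_{2b}$ and $\Gamma_{2c}$ realizes the two missing curls by explicit short sequences. Second, the remaining two oriented $\Omega_2$ moves: conjugating $\Gamma_{2b}$ and $\Gamma_{2c}$ by suitable $\Omega_1$ kinks together with one application of $\Gamma_{3a}$ switches the over/under and co-orientation data and produces them. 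Third, all remaining oriented $\Omega_3$ moves: with the full set of $\Omega_2$ moves now at our disposal, the classical mechanism — sliding an arc past a crossing by $\Omega_2$ detour moves so as to turn the crossing pattern of $\Gamma_{3a}$ into that of any prescribed oriented $\Omega_3$ configuration — yields all eight. Each step is verified by drawing a finite sequence of the admissible moves.

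\emph{Main obstacle.} The real difficulty is the necessity direction: obstructing every single-$\Omega_2$ choice and every $\Omega_2$-pair distinct from $\{\Gamma_{2b},\Gamma_{2c}\}$. This needs not one clean invariant but a small family of semilocal functionals, one per forbidden configuration, and the delicate point is to verify that each is simultaneously unchanged by $\Gamma_{3a}$, by the relevant $\Omega_1$ moves and by the relevant $\Omega_2$ move(s) while still being non-constant — the all-positive, non-cyclically-oriented shape of $\Gamma_{3a}$ being exactly what keeps the ``bad'' $\Omega_2$-pairs from working here although they do work alongside a cyclic $\Omega_3$ move.
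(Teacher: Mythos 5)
The paper does not prove this statement: it is imported verbatim from Polyak \cite[Theorem 1.2]{Po} and used as a black box in the proofs of Theorems \ref{main-thm-1-2} and \ref{main-thm-1-3}, so there is no in-paper argument to compare yours against. Measured against Polyak's original proof, your outline follows the right strategy: the $(w,r)$ bookkeeping showing that each oriented $\Omega_1$ move shifts the writhe--Whitney-index pair by $(\pm1,\pm1)$ while all $\Omega_2$ and $\Omega_3$ moves fix it, hence that a generating set needs one $\Omega_1$ move from each of the two classes $\{\Gamma_1,\Gamma_{1b}\}$ and $\{\Gamma'_1,\Gamma_{1a}\}$ (which is exactly the list of four admissible pairs); \"Ostlund's lower bound \cite{Ol}; Gauss-diagram-type invariants to obstruct the forbidden $\Omega_2$ configurations; and explicit move sequences for sufficiency. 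The counting step (with $|S|\le 5$, one $\Omega_3$ move and two $\Omega_1$ moves leave room for exactly two $\Omega_2$ moves) is also correct.

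The gap is that everything which actually constitutes the proof is asserted rather than carried out. In the necessity direction, the phrases ``one exhibits a semilocal functional \dots that is fixed by $\Gamma_{3a}$, by that $\Omega_2$ move and by every oriented $\Omega_1$ move, but is altered by some other $\Omega_2$ move'' and ``one produces a similar functional \dots ruling out that pair'' are precisely the content of Polyak's Theorem 1.2; no such functional is written down, and verifying invariance under $\Gamma_{3a}$ and under all four oriented $\Omega_1$ moves simultaneously is the delicate part (it is exactly what distinguishes the non-cyclic case from Theorem \ref{thm-polyak}, where a single $\Omega_2$ move suffices). Likewise, in the sufficiency direction the ``explicit short sequences'' realizing the two missing $\Omega_1$ curls and the two missing $\Omega_2$ moves from $\Gamma_{3a}$, $\Gamma_{2b}$, $\Gamma_{2c}$ and one $\Omega_1$-pair are not produced, and these are nontrivial to find (they occupy several lemmas and figures in \cite{Po}). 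As an annotated summary of why the cited theorem is true your text is sound and could serve as a remark; as a proof it is incomplete at every step that requires verification.
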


\begin{remark}
(1) There are other generating sets for Reidemeister moves. In fact, the set of four oriented Reidemeister moves $\Gamma_1, \Gamma'_1, \Gamma_{2a}$ and $\Gamma_3$ is also a minimal generating set of oriented Reidemeister moves.

(2) Theorem \ref{thm-polyak-1} implies that any generating set of oriented Reidemeister moves which includes the move $\Gamma_{3a}$ contains at least five moves. 
\end{remark}

In the rest of the paper, we carefully deal with Yoshikawa moves on marked graph diagrams of surface-links in $\mathbb R^4$ and provide generating sets of unoriented/oriented Yoshikawa moves, which are analogous to those of unoriented/oriented Reidemeister moves on diagrams of classical links in $\mathbb R^3$.


\section{Marked graph representation of surface-links}\label{sect-omgr-osl}

In this section, we review marked graph diagrams representing surface-links. A {\it marked graph} is a spatial graph $G$ in $\mathbb R^3$ which satisfies the following:
\begin{itemize}
  \item [(1)] $G$ is a finite regular graph with $4$-valent vertices, say $v_1, v_2, . . . , v_n$.
  \item [(2)] Each $v_i$ is a rigid vertex; that is, we fix a rectangular neighborhood $N_i$ homeomorphic to $\{(x, y)|-1 \leq x, y \leq 1\},$ where $v_i$ corresponds to the origin and the edges incident to $v_i$ are represented by $x^2 = y^2$.
  \item [(3)] Each $v_i$ has a {\it marker}, which is the interval on $N_i$ given by  $\{(x, 0)|-1 \leq x \leq 1\}$.
\end{itemize}

An {\it orientation} of a marked graph $G$ is a choice of an orientation for each edge of $G$ in such a way that every vertex in $G$ looks like 
\xy (-5,5);(5,-5) **@{-}, 
(5,5);(-5,-5) **@{-}, 
(3,3.2)*{\llcorner}, 
(-3,-3.4)*{\urcorner}, 
(-2.5,2)*{\ulcorner},
(2.5,-2.4)*{\lrcorner}, 
(3,-0.2);(-3,-0.2) **@{-},
(3,0);(-3,0) **@{-}, 
(3,0.2);(-3,0.2) **@{-}, 
\endxy.
A marked graph $G$ is said to be 
{\it orientable} if it admits an orientation. Otherwise, it is said to be {\it non-orientable}. By an {\it oriented marked graph} we mean an orientable marked graph with a fixed orientation (see Fig.~\ref{fig-ori-mg}). Two oriented marked graphs are said to be {\it equivalent} if they are ambient isotopic in $\mathbb R^3$ with keeping the rectangular neighborhood, marker and orientation. As usual, a marked graph can be described by a link diagram on $\mathbb R^2$ with some $4$-valent vertices equipped with markers.

For $t \in \mathbb R,$ we denote by $\mathbb R^3_t$ the hyperplane of $\mathbb R^4$ whose fourth coordinate
is equal to $t \in \mathbb R$, i.e., $\mathbb R^3_t := \{(x_1, x_2, x_3, x_4) \in
\mathbb R^4~|~ x_4 = t \}$. A surface-link $\mathcal L \subset \mathbb R^4=\mathbb R^3 \times \mathbb R$ can be described in terms of its {\it cross-sections} $\mathcal L_t=\mathcal L \cap \mathbb R^3_t, ~ t \in \mathbb R$ (cf. \cite{Fox}). Let $p:\mathbb R^4 \to \mathbb R$ be the projection given by $p(x_1, x_2, x_3, x_4)=x_4$. Note that any surface-link can be perturbed to a surface-link $\mathcal L$ such that the projection $p : \mathcal L \to \mathbb R$ is a Morse function with finitely many distinct non-degenerate critical values. More especially, it is well known \cite{Ka2,Kaw,KSS,Lo} that any surface-link $\mathcal L$ can be deformed into a surface-link $\mathcal L'$, called a {\it hyperbolic splitting} of $\mathcal L$,
by an ambient isotopy of $\mathbb R^4$ in such a way that
the projection $p: \mathcal L' \to \mathbb R$ satisfies that
all critical points are non-degenerate,
all the index 0 critical points (minimal points) are in $\mathbb R^3_{-1}$,
all the index 1 critical points (saddle points) are in $\mathbb R^3_0$, and
all the index 2 critical points (maximal points) are in $\mathbb R^3_1$.

Let $\mathcal L$ be a surface-link and let ${\mathcal L'}$ be a hyperbolic splitting of $\mathcal L.$ Then the cross-section $\mathcal L'_0=\mathcal L'\cap \mathbb R^3_0$ at $t=0$ is a spatial $4$-valent regular graph in $\mathbb R^3_0$. We give a marker at each $4$-valent vertex (saddle point) that indicates how the saddle point opens up above as illustrated in Fig.~\ref{sleesan2:fig1}. 

\begin{figure}[h]
\begin{center}
\resizebox{0.55\textwidth}{!}{%
  \includegraphics{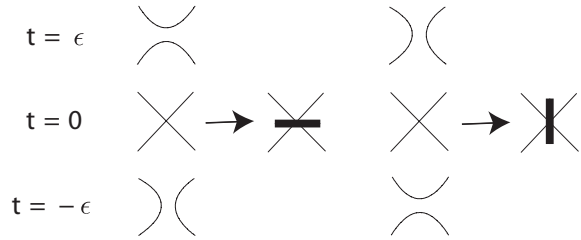} }
\caption{Marking of a vertex} \label{sleesan2:fig1}
\end{center}
\end{figure} 

The resulting marked graph $G$ is called a {\it marked graph} presenting $\mathcal L$. As usual, $G$ is described by a diagram $D$ in $\mathbb R^2$, which is a generic projection on $\mathbb R^2$ of $G$ with over/under crossing information for each double point, as in a classical link diagram, such that the restriction to a small rectangular neighborhood of each marked vertex is a homeomorphism. Such a diagram $D$ is called a {\it marked graph diagram} (or {\it ch-diagram} (cf. \cite{So})) {\it presenting} $\mathcal L$.  

When $\mathcal L$ is an oriented surface-link, we choose an orientation for each edge of $\mathcal L'_0$ that coincides with the induced orientation on the boundary of $\mathcal L' \cap \mathbb R^3 \times (-\infty, 0]$ by the orientation of $\mathcal L'$ inherited from the orientation of $\mathcal L$. The resulting oriented marked graph diagram $D$ is called an {\it oriented marked graph diagram} presenting the oriented surface-link $\mathcal L$. 

\begin{figure}[ht]
\begin{center}
\resizebox{0.50\textwidth}{!}{%
\includegraphics{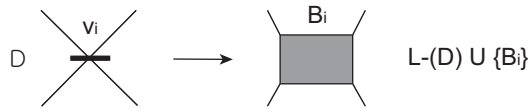} }
\caption{A band attached to $L_-(D)$ at $v_i$}\label{fig-orbd}
\end{center}
\end{figure}

Now let $D$ be a given admissible marked graph diagram with marked vertices $v_1, \ldots, v_n$. We define a surface $F(D) \subset \mathbb R^3 \times [-1,1]$ by
\begin{equation*}
(\mathbb R^3_t, F(D) \cap \mathbb R^3_t)=\left\{%
\begin{array}{ll}
    (\mathbb R^3, L_+(D)) & \hbox{for $0 < t \leq 1$,} \\
    \bigg( \mathbb R^3, L_-(D) \cup \bigg( \overset{n}{\underset{i=1}{\cup}} B_i \bigg) \bigg) & \hbox{for $t = 0$,} \\
    (\mathbb R^3, L_-(D)) & \hbox{for $-1 \leq t < 0$,} \\
\end{array}%
\right.
\end{equation*}
where $B_i (i=1, \ldots,n)$ is a band attached to $L_-(D)$ at the marked vertex $v_i$ as shown in Fig.~\ref{fig-orbd}.
We call $F(D)$ the {\it proper surface associated with} $D$.
Since $D$ is admissible, we can obtain a surface-link from $F(D)$ by attaching trivial disks in $\mathbb R^3 \times [1, \infty)$ and another trivial disks in $\mathbb R^3 \times (-\infty, 1]$.  We denote this surface-link by $\mathcal S(D)$, and call it
the {\it surface-link associated with $D$}. It is known that the isotopy type of $\mathcal S(D)$ does not depend on choices of trivial disks (cf. \cite{Ka2,KSS}). It is straightforward from the construction of $\mathcal S(D)$ that $D$ is a marked graph diagram presenting $\mathcal S(D)$. 

It is known that $D$ is orientable if and only if $F(D)$ is an orientable surface.  When $D$ is oriented, the resolutions
$L_-(D)$ and $L_+(D)$ have orientations induced from the orientation of $D$, and we assume $F(D)$ is oriented so that the induced orientation on $L_+(D) = \partial F(D) \cap \mathbb R^3_1$ matches the orientation of $L_+(D)$.

 Theorem \ref{thm-eqiv-ch-digs} in Section 1 shows that a surface-link is completely described by its marked graph diagram modulo unoriented Yoshikawa's moves, and also shows that an oriented surface-link is completely described by its oriented marked graph diagram modulo oriented Yoshikawa's moves.


\section{Proof of Theorem \ref{main-thm-1-1}}
\label{sect-gs-uoym}

To prove Theorem \ref{main-thm-1-1}, it suffices to show that the mirror moves in Fig.~\ref{fig-r45678n} and the Reidemeister moves $\Omega_{1a}$ and $\Omega_{3a}$ in Fig.~\ref{fig-rmove} are generated by the $10$ moves in the set $\mathfrak S$ in Theorem \ref{main-thm-1-1}. This is going to be done in several steps. We begin with the following 

\begin{lemma}\label{main-thm-3-2-lem1}
The move $\Omega_{4a}$ is realized by $\Omega_{2}$, $\Omega_{4}$ and plane isotopies. The move $\Omega'_{4a}$ is realized by $\Omega_{2}$, $\Omega'_{4}$ and plane isotopies.
\end{lemma}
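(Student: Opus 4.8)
The plan is a diagram chase realizing $\Omega_{4a}$, up to plane isotopy, as the composition of an $\Omega_2$ move, an $\Omega_4$ move, and a second $\Omega_2$ move. The starting observation is that $\Omega_{4a}$ is nothing but the move $\Omega_4$ performed at a marked vertex whose marker has been turned by a quarter turn — equivalently, $\Omega_4$ and $\Omega_{4a}$ displace a sheet past the saddle along the two complementary directions — and that $\Omega'_{4a}$ stands to $\Omega'_4$ in exactly the same way; hence it suffices to treat $\Omega_{4a}$, the case of $\Omega'_{4a}$ following by switching every crossing in the argument and replacing $\Omega_4$ by $\Omega'_4$ throughout.

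I would first fix explicit local pictures, inside a disk, for the left- and right-hand sides of $\Omega_{4a}$. Starting from the left-hand side, apply an $\Omega_2$ move between the displaced arc and one of the four edges incident to the marked vertex. After a plane isotopy that tidies up the resulting finger, the local diagram becomes one to which an $\Omega_4$ move applies; applying $\Omega_4$ carries the displaced arc past the vertex. A second $\Omega_2$ move, run in reverse, then cancels the pair of crossings created at the first step, and a concluding plane isotopy puts the diagram into the form of the right-hand side of $\Omega_{4a}$. Reading the same chain of pictures with all crossings reversed and with $\Omega'_4$ in place of $\Omega_4$ realizes $\Omega'_{4a}$ from $\Omega_2$ and $\Omega'_4$.

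The step requiring genuine care — and the one I expect to be the main obstacle — is the first: one must choose the $\Omega_2$ move (which edge, which finger, in which direction, and with which over/under data) so that the diagram it produces is genuinely of $\Omega_4$-type, in the correct handedness and correctly placed relative to the marker, rather than one of its mirror variants. This is a finite verification, best displayed as a short strip of four or five successive diagrams; once the correct $\Omega_2$ move has been pinned down, the remaining steps are routine bookkeeping.
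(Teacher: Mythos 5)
Your proposal is correct and follows essentially the same route as the paper, whose entire proof is the strip of diagrams in Fig.~\ref{fig-m4} realizing $\Omega_{4a}$ as $\Omega_2$ followed by $\Omega_4$ followed by an inverse $\Omega_2$ (with plane isotopies), and likewise for $\Omega'_{4a}$ with $\Omega'_4$. The only quibble is that the pair of crossings cancelled by the final $\Omega_2$ is not literally the pair created by the first one (the intermediate $\Omega_4$ trades one of them for a crossing with a different edge), but this does not affect the argument.
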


\begin{proof} 
See Fig.~\ref{fig-m4}.
\end{proof}

\begin{figure}[ht]
\begin{center}
\resizebox{0.80\textwidth}{!}{%
  \includegraphics{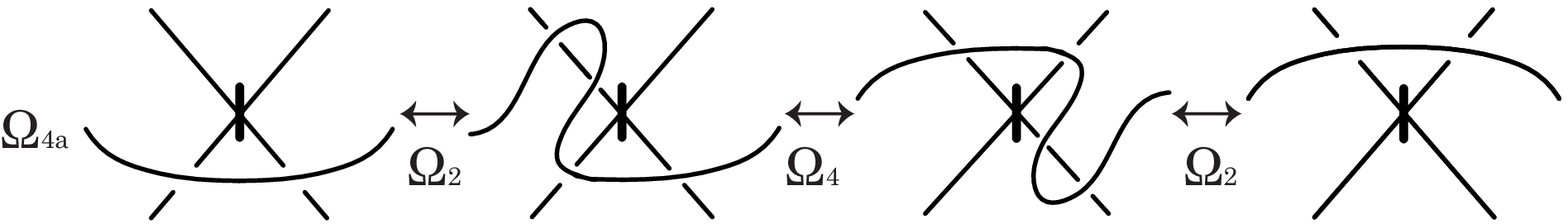}}
\end{center}
\begin{center}
\resizebox{0.80\textwidth}{!}{%
  \includegraphics{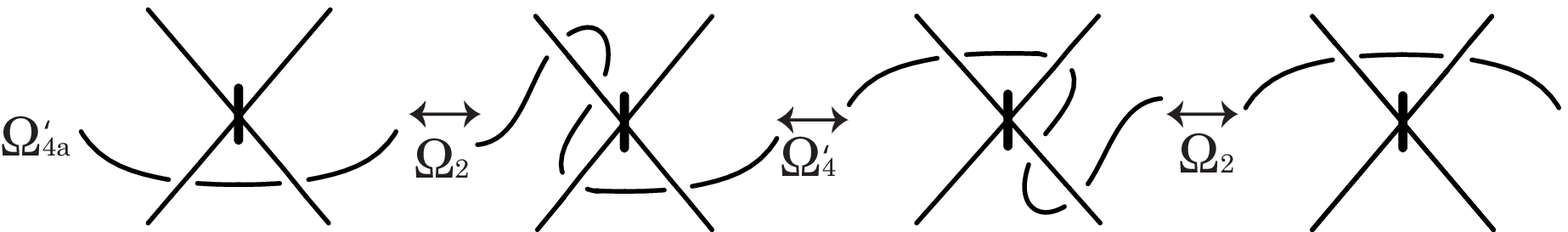} }
\caption{Mirror moves of $\Omega_4$ and $\Omega'_4$}\label{fig-m4}
\end{center}
\begin{center}
\resizebox{0.75\textwidth}{!}{%
  \includegraphics{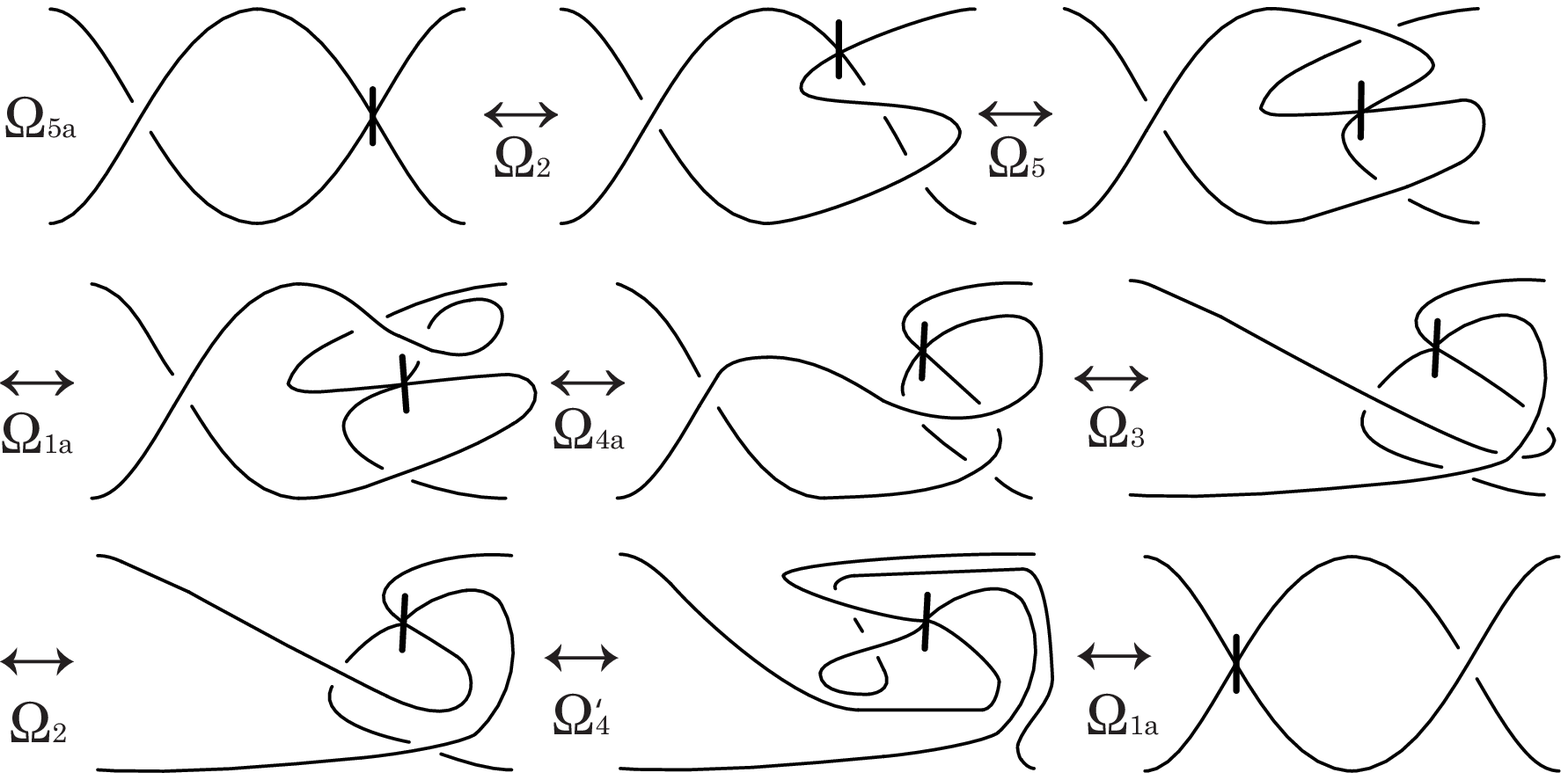}}
\end{center}
\begin{center}
\resizebox{0.90\textwidth}{!}{%
  \includegraphics{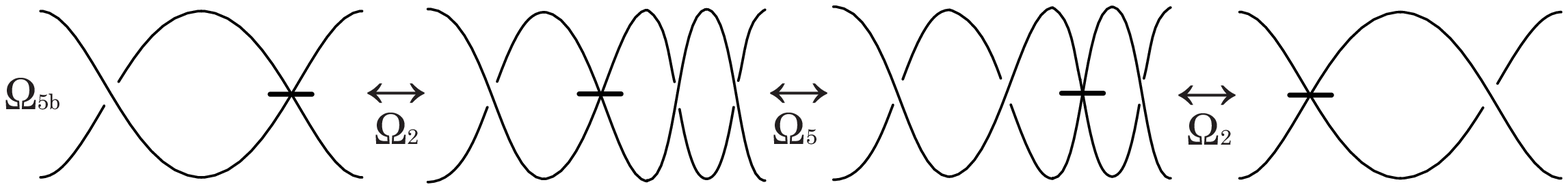} }
\end{center}
\begin{center}
\resizebox{0.95\textwidth}{!}{%
  \includegraphics{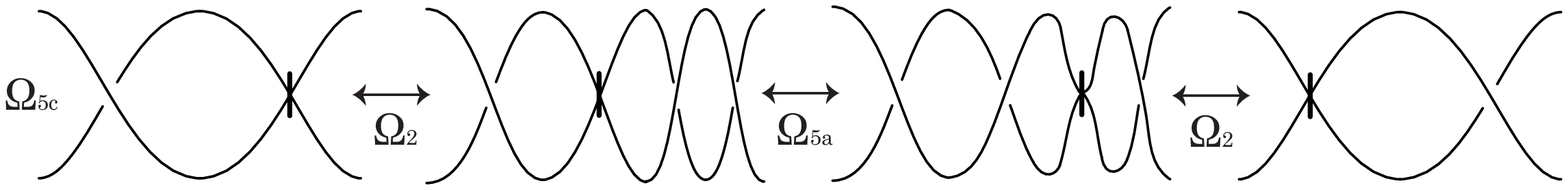}}
  \caption{Mirror moves of $\Omega_5$}\label{fig-m5}
\end{center}
\end{figure}
\begin{figure}[ht]
\begin{center}
\resizebox{0.70\textwidth}{!}{%
\includegraphics{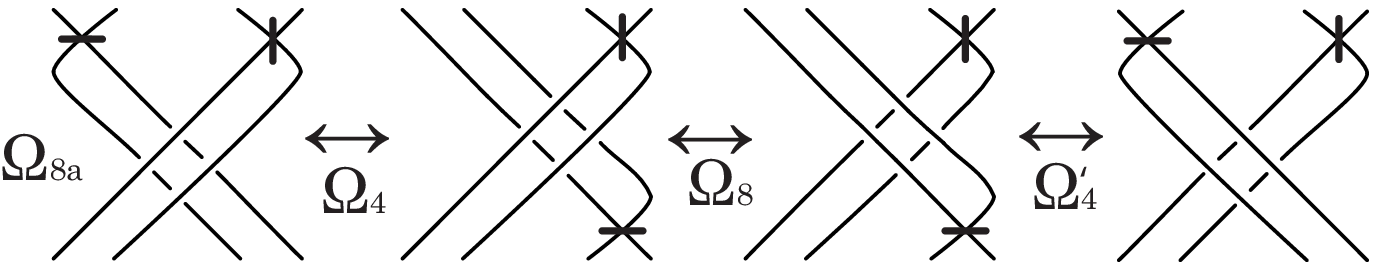}}
\caption{Mirror move of $\Omega_8$}\label{fig-m8}
\end{center}
\end{figure}

\begin{lemma}\label{main-thm-3-2-lem2}
The move $\Omega_{5a}$ is realized by $\Omega_{1a}$, $\Omega_{2}$, $\Omega_{3}$, $\Omega'_{4}$, $\Omega_{4a}$, $\Omega_{5}$ and plane isotopies. The move $\Omega_{5b}$ is realized by $\Omega_{2}$, $\Omega_{5}$ and plane isotopies. The move $\Omega_{5c}$ is realized by $\Omega_{2}$, $\Omega_{5a}$ and plane isotopies.
\end{lemma}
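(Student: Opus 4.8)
The plan is to prove each of the three assertions by a diagrammatic ``movie'': for each move one exhibits an explicit finite sequence of diagrams, each obtained from the previous one by a plane isotopy or by one of the permitted local moves, starting from the left-hand picture of the move and ending at its right-hand picture. This is the same strategy used for Lemma~\ref{main-thm-3-2-lem1}, and the sequences are most conveniently recorded as strings of pictures, the $\Omega_5$-family analogue of Fig.~\ref{fig-m4} (see Fig.~\ref{fig-m5}). Since $\Omega_{5b}$ and $\Omega_{5c}$ are allowed the fewest moves, and since $\Omega_{5c}$ is allowed to use $\Omega_{5a}$, I would settle $\Omega_{5a}$ first and then deduce $\Omega_{5b}$ and $\Omega_{5c}$ from it and from $\Omega_5$.

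For $\Omega_{5b}$: the two pictures of $\Omega_{5b}$ differ from the two pictures of $\Omega_5$ only in the way the curled strand is routed past the marked vertex, and one checks that a single $\Omega_2$ move pulling that strand across one of the arcs incident to the vertex, followed by a plane isotopy, turns the left-hand picture of $\Omega_{5b}$ into the left-hand picture of $\Omega_5$. One then applies $\Omega_5$ and a second $\Omega_2$ move to reach the right-hand picture of $\Omega_{5b}$. The realization of $\Omega_{5c}$ is obtained verbatim with $\Omega_{5a}$ used in the middle in place of $\Omega_5$, which is exactly why it is listed as realized by $\Omega_2$ and $\Omega_{5a}$, and why the two cases must be proved in this order.

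For $\Omega_{5a}$ the obstruction is genuine: the left-hand and right-hand pictures of $\Omega_{5a}$ agree with those of $\Omega_5$ except for the direction of the marker at the marked vertex, and changing the marker in place is not one of the permitted moves, so the curl attached to the vertex must be exploited in an essential way. The idea is to slide the two arcs of that curl over and around the marked vertex by means of $\Omega'_4$ and $\Omega_{4a}$, to commute the crossings thereby created past one another by an $\Omega_3$ move, to pull the resulting strands apart by $\Omega_2$ moves, to apply a single $\Omega_5$ move at the moment the configuration has been brought into the standard $\Omega_5$ form, and to repair the framing of the surviving curl by an $\Omega_{1a}$ move; reversing the preparatory $\Omega'_4$, $\Omega_{4a}$ and $\Omega_2$ moves then produces the right-hand picture of $\Omega_{5a}$. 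The main obstacle I anticipate is entirely organizational: one must choose the order in which the arcs are slid over the vertex so that the intermediate $\Omega_3$ really is a legitimate unoriented Reidemeister~III configuration and so that the framing discrepancy is corrected by exactly one $\Omega_{1a}$ move. Getting this sequence to close up is the only delicate point, and once the intermediate diagrams are drawn carefully it is essentially forced.
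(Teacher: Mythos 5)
Your strategy is the same as the paper's: the proof of this lemma consists entirely of the explicit sequences of diagrams in Fig.~\ref{fig-m5}, each arrow being a plane isotopy or one of the permitted moves, exactly as in Lemma~\ref{main-thm-3-2-lem1}. The genuine gap is in the first and only hard assertion, the realization of $\Omega_{5a}$. You correctly identify that the marker direction cannot be changed in place, that the crossing must therefore be transported around the vertex by the $\Omega_4$-type moves, and your toolbox ($\Omega'_4$, $\Omega_{4a}$, $\Omega_3$, $\Omega_2$, $\Omega_5$, $\Omega_{1a}$) matches the statement exactly; but you never exhibit the sequence, and you yourself defer the only nontrivial points --- whether the intermediate $\Omega_3$ is a legitimate configuration, whether a horizontal-marker vertex actually arises in standard position so that $\Omega_5$ and $\Omega'_4$ can be applied to it, and whether the residual kink is removed by exactly one $\Omega_{1a}$ --- to the drawing of ``intermediate diagrams'' that are not supplied. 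For a statement of this kind the explicit movie \emph{is} the proof; a description of the intended shape of the sequence, with its closure left unverified, does not establish the realization, so the $\Omega_{5a}$ case remains a plan rather than a proof.

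There is also an imprecision in your $\Omega_{5b}$ argument that should be repaired even though the intended derivation is the correct (and standard) one. A single $\Omega_2$ move followed by a plane isotopy cannot turn the left-hand picture of $\Omega_{5b}$ into the left-hand picture of $\Omega_5$: both are one-crossing configurations with opposite crossings, while $\Omega_2$ changes the number of crossings by two and plane isotopy preserves it. What the first $\Omega_2$ actually does is insert a cancelling pair of crossings on the far side of the marked vertex, producing a three-crossing diagram that merely \emph{contains} one side of $\Omega_5$ as a local sub-configuration; one applies $\Omega_5$ to that piece and then cancels the leftover pair with the second $\Omega_2$. The move count ($\Omega_2$, $\Omega_5$, $\Omega_2$) and the conclusion are right, and the same correction applies verbatim to your $\Omega_{5c}$ argument with $\Omega_{5a}$ in place of $\Omega_5$.
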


\begin{proof} 
See Fig.~\ref{fig-m5}.
\end{proof}

\begin{lemma}\label{main-thm-3-2-lem5}
The move $\Omega_{8a}$ is realized by $\Omega_{4}$, $\Omega'_{4}$, $\Omega_8$ and plane isotopies.
\end{lemma}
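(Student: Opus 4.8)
The plan is to exhibit an explicit finite sequence of marked graph diagrams $D_0\to D_1\to\cdots\to D_k$, with $D_0$ and $D_k$ the two sides of $\Omega_{8a}$ and each arrow a plane isotopy or one application of $\Omega_4$, $\Omega'_4$, or $\Omega_8$, and to record this sequence as a figure in the style of Figs.~\ref{fig-m4} and \ref{fig-m5}.

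The point is the following comparison: the underlying (unmarked) link diagrams on the two sides of $\Omega_{8a}$ agree with those on the two sides of $\Omega_8$, and the two moves differ only in that the markers at the two marked $4$-valent vertices are interchanged. Thus $\Omega_{8a}$ cannot be obtained from $\Omega_8$ by plane isotopy alone, nor by the strand-sliding moves $\Omega_4,\Omega'_4$ alone, since none of these alters which resolution a marker selects; what is needed is to use an $\Omega_8$ move (which relates a diagram to its mirror while changing markers in exactly this ``substantive'' way) in a position arranged by $\Omega_4$ and $\Omega'_4$. Concretely, the approach --- parallel to the classical derivation of $\Omega_{3a}$ from $\Omega_2$ and $\Omega_3$ recalled in Section~\ref{sect-mg-crm} --- is: starting from the left-hand side of $\Omega_{8a}$, first slide the two ``over'' strands past the marked vertices by $\Omega_4$ and $\Omega'_4$ (each slide being $\Omega_4$ or $\Omega'_4$ according to the sign of the crossing it passes) until $\Omega_8$ becomes applicable; apply $\Omega_8$ once; then undo the slides by the matching $\Omega_4$/$\Omega'_4$ moves; a plane isotopy at the end puts the result in the standard form of the right-hand side of $\Omega_{8a}$. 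Both $\Omega_4$ and $\Omega'_4$ genuinely occur among the slides, which is why the statement needs both.

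The hard part will be the bookkeeping rather than any single deformation. One must verify at each intermediate diagram that the over/under data at every ordinary crossing and the position of every marker coincide exactly with those of (a plane-isotopic copy of) the move being invoked, so that only $\Omega_4,\Omega'_4,\Omega_8$ --- and none of their mirror variants nor any unlisted Yoshikawa move --- are used, and in particular that the single $\Omega_8$ step is in its mirror-free form and genuinely accounts for the interchange of the two markers. As in the proofs of Lemmas~\ref{main-thm-3-2-lem1} and \ref{main-thm-3-2-lem2}, the clearest way to certify all of this is to draw out the entire sequence of diagrams.
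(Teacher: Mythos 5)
Your high-level plan --- exhibit an explicit chain of diagrams joined by plane isotopies, $\Omega_4$, $\Omega'_4$ and a single $\Omega_8$, recorded as a figure --- is exactly the form the paper's proof takes (the proof is literally Fig.~\ref{fig-m8}). But the proposal stops at the plan: for a lemma whose entire mathematical content is that explicit chain, deferring ``the bookkeeping'' and the drawing of the sequence leaves the proof ungiven. More importantly, the one substantive reason you offer for why the plan should succeed is incorrect. You assert that $\Omega_8$ ``relates a diagram to its mirror while changing markers in exactly this substantive way'' and that the single $\Omega_8$ step ``genuinely accounts for the interchange of the two markers.'' Inspecting Fig.~\ref{fig-moves-type-II}, both sides of $\Omega_8$ carry identical marker data: on each side the left-hand vertex has a vertical marker and the right-hand vertex a horizontal one, each vertex retains its connections to the boundary of the move's disk, and only the crossings among the four pendant edges below the vertices are altered. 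So $\Omega_8$ changes no marker, and neither do $\Omega_4$, $\Omega'_4$, or plane isotopy (none of these alters which pair of opposite edges the marker selects at a given vertex). Read literally, your scheme ``slide until $\Omega_8$ becomes applicable, apply it, slide back'' can therefore never bridge the marker discrepancy between $\Omega_{8a}$ and $\Omega_8$.

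What actually makes the derivation possible --- and what your write-up never identifies --- is that $\Omega_8$ must be applied in a repositioned disk: a half-turn carries the configuration ``horizontal marker at the left vertex, vertical at the right'' of $\Omega_{8a}$ to the configuration ``vertical at the left, horizontal at the right'' of $\Omega_8$, at the price of transporting the crossings among the pendant edges to the opposite side of the two marked vertices; the role of $\Omega_4$ and $\Omega'_4$ is precisely to carry those crossings back past the vertices (over- and under-slides respectively, which is why both are needed), with no $\Omega_1$, $\Omega_2$ or $\Omega_3$ ever required. Without some such repositioning the intermediate diagrams you propose to draw cannot match the $\Omega_8$ template, so the verification you outline would fail exactly at the $\Omega_8$ step. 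To repair the argument, make the repositioning explicit and then supply the full sequence of diagrams, as the paper does in Fig.~\ref{fig-m8}.
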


\begin{proof}
See Fig.~\ref{fig-m8}.
\end{proof}

{\bf Proof of Theorem \ref{main-thm-1-1}.} 
Let $M$ be a move of the unoriented Yoshikawa moves.
Note that the three Reidemeister moves $\Omega_1, \Omega_2, \Omega_3$ generate all unoriented Reidemeister moves and so do $\Omega_{1a}$ and $\Omega_{3a}$ (cf. Section 2). From Lemmas \ref{main-thm-3-2-lem1}, \ref{main-thm-3-2-lem2} and \ref{main-thm-3-2-lem5}, it is seen that $M$ lies in the set $\mathfrak S$ in Theorem \ref{main-thm-1-1}, or $M$ can be generated by the moves lie in $\mathfrak S$. This completes the proof of Theorem \ref{main-thm-1-1}.\hfill$\Box$

\begin{remark}
There are minimal generating sets of classical Reidemeister moves other than $\{\Omega_1, \Omega_2, \Omega_3\}$. For example, $\{\Omega_{1a}, \Omega_2, \Omega_{3a}\}$. Indeed, replacing any generating set of Reidemeister moves with the moves $\Omega_1, \Omega_2, \Omega_3$ yields a generating set of unoriented Yoshikawa moves. This fact and Lemmas \ref{main-thm-3-2-lem1}-\ref{main-thm-3-2-lem5} enable us to do that we can choose a generating set of unoriented Yoshikawa moves other than one given in Theorem \ref{main-thm-1-1}. For example, it is easily seen that $\{\Omega_{1a}, \Omega_2, \Omega_{3a}, \Omega_{4}, \Omega'_{4}, \Omega_{5}, \Omega_{6},$ $\Omega'_{6},$ $\Omega_{7}, \Omega_{8}\}$  and 
$\{\Omega_{1a}, \Omega_2, \Omega_{3a}, \Omega_{4a}, \Omega_{4b}, \Omega_{5}, \Omega_{6}, \Omega'_{6}, \Omega_{7}, \Omega_{8}\}$ are also generating sets of unoriented Yoshikawa moves.
\end{remark}


\section{Proof of Theorem \ref{main-thm-1-2} and Theorem \ref{main-thm-1-3}}\label{sect-gs-oym}

We remind that the oriented Yoshikawa moves are the moves $\Omega_i(i = 1, 2, \ldots, 8),$ $\Omega'_4,$ $\Omega'_6$ in Fig.~\ref{fig-moves-type-II}, $\Omega_{4a}, \Omega'_{4a}, \Omega_{5a},$ $\Omega_{5b},$ $\Omega_{5c}, \Omega_{8a}$ in Fig.~\ref{fig-r45678n} and Fig.~\ref{fig-rmove} with all possible orientations. On the other hand, by virtue of Theorem \ref{main-thm-1-1}, we see that any such an oriented Yoshikawa move is generated by plane isotopies and some moves lie in the generating set $\mathfrak S=\{\Omega_{1}, \Omega_2, \Omega_{3}, \Omega_{4}, \Omega'_{4}, \Omega_{5}, \Omega_{6}, \Omega'_{6}, \Omega_{7}, \Omega_{8}\}$ of 10 Yoshikawa moves with all possible orientations.
In Section \ref{sect-mg-crm}, we have discussed the oriented Reidemeister moves $\Gamma_{1}, \Gamma_2, \Gamma_{3}$ (see Fig.~\ref{fig-r123o}). Now it is easily seen that all possible Yoshikawa moves $\Omega_{4}, \ldots, \Omega_8, \Omega'_4, \Omega'_6$ with orientations are the oriented moves $\Gamma_{4}, \ldots, \Gamma_8, \Gamma'_4, \Gamma'_6$ in Fig.~\ref{fig-moves-type-II-o} and the moves $\Gamma_{4a}, \Gamma'_{4a}, \Gamma_{6a}, \Gamma'_{6a}, \Gamma_{7a}, \Gamma_{8a}, \Gamma_{8b}, \Gamma_{8c}$ shown in Fig.~\ref{fig-om2}. We note that the move $\Gamma_5$ with reverse orientation is the move $\Gamma_5$ itself. 

\begin{figure}[ht]
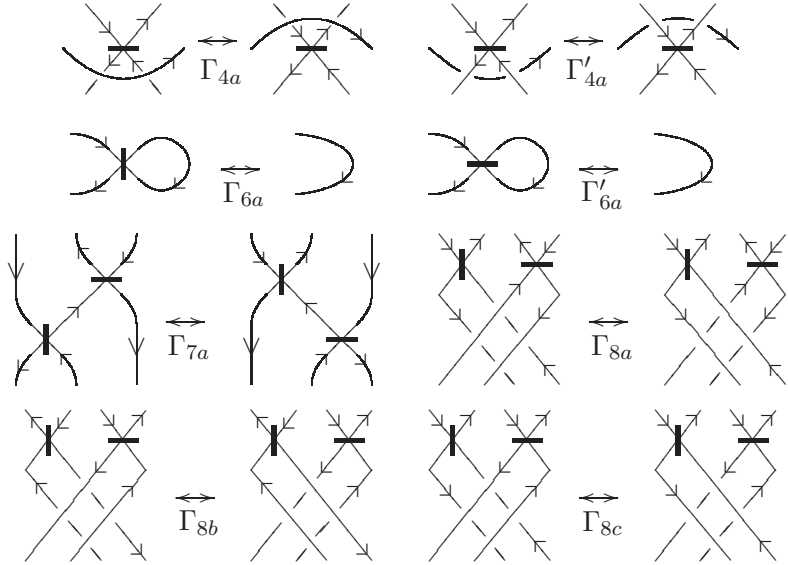

\centerline{
\xy
(7,6);(23,6) **\crv{(15,-2)}, 
(10,0);(11.5,1.8) **@{-},
(12.5,3);(20,12) **@{-}, 
(10,12);(17.5,3) **@{-}, 
(18.5,1.8);(20,0) **@{-}, 
(13,6);(17,6) **@{-}, (13,6.1);(17,6.1) **@{-}, (13,5.9);(17,5.9)
**@{-}, (13,6.2);(17,6.2) **@{-}, (13,5.8);(17,5.8) **@{-},
  (13.8,4.5) *{\llcorner}, (17.5,8.6) *{\urcorner}, (17,3.1) *{\ulcorner},  (21,3.5) *{\urcorner},(12.5,9) *{\lrcorner},
(25,7);(30,7) **@{-} ?>*\dir{>} ?<*\dir{<}, (32,7) *{},
(28,3)*{\Gamma_{4a}},
\endxy
\xy (57,6);(73,6)  **\crv{(65,14)}, 
(70,12);(68.5,10.2) **@{-},
(67.5,9);(60,0) **@{-}, 
(70,0);(62.5,9) **@{-}, 
(61.5,10.2);(60,12) **@{-}, 
(63,6);(67,6) **@{-}, (63,6.1);(67,6.1) **@{-}, (63,5.9);(67,5.9)
**@{-}, (63,6.2);(67,6.2) **@{-}, (63,5.8);(67,5.8) **@{-},
(63,3.4) *{\llcorner}, (66.4,7.3) *{\urcorner}, (68,2) *{\ulcorner}, (70.9,8) *{\lrcorner},(63,8.5) *{\lrcorner},
\endxy\quad\quad\xy 
  (13,2.2);(17,2.2)  **\crv{(15,1.7)}, 
  (7,6);(11,3)  **\crv{(10,3.5)}, 
  (23,6);(19,3)  **\crv{(20,3.5)}, 
 (10,0);(20,12) **@{-}, 
(10,12);(20,0) **@{-}, 
(13,6);(17,6) **@{-}, (13,6.1);(17,6.1) **@{-}, (13,5.9);(17,5.9)
**@{-}, (13,6.2);(17,6.2) **@{-}, (13,5.8);(17,5.8) **@{-}, 
 (13.8,4.5) *{\llcorner}, (17.5,8.6) *{\urcorner}, (17,3.1) *{\ulcorner},  (21,3.5) *{\urcorner},(12.5,9) *{\lrcorner},
(25,7);(30,7) **@{-} ?>*\dir{>} ?<*\dir{<}, (32,7) *{},
(28,3)*{\Gamma'_{4a}},
\endxy
\xy
   (63,9.8);(67,9.8)  **\crv{(65,10.3)}, 
  (57,6);(61,9)  **\crv{(60,8.5)}, 
  (73,6);(69,9)  **\crv{(70,8.5)}, 
(70,12);(60,0) **@{-}, 
(70,0);(60,12) **@{-}, 
(63,6);(67,6) **@{-}, (63,6.1);(67,6.1) **@{-}, (63,5.9);(67,5.9)
**@{-},(63,6.2);(67,6.2) **@{-}, (63,5.8);(67,5.8) **@{-}, 
(63,3.4) *{\llcorner}, (66.4,7.3) *{\urcorner}, (68,2) *{\ulcorner}, (70.9,8) *{\lrcorner},(63,8.5) *{\lrcorner},
\endxy}

\vskip 0.1cm

\centerline{
\xy (12,6);(16,2) **@{-}, (12,2);(16,6) **@{-},
(16,6);(22,6) **\crv{(18,8)&(20,8)}, (16,2);(22,2)
**\crv{(18,0)&(20,0)}, (22,6);(22,2) **\crv{(23.5,4)}, (7,8);(12,6) **\crv{(10,8)}, (7,0);(12,2) **\crv{(10,0)}, (11,1.2) *{\llcorner}, (11,7) *{\lrcorner},(21.5,1.5)*{\llcorner},
(14,6);(14,2) **@{-}, (14.1,6);(14.1,2) **@{-}, (13.9,6);(13.9,2)
**@{-}, (14.2,6);(14.2,2) **@{-}, (13.8,6);(13.8,2) **@{-},
(27,3.3);(32,3.3) **@{-} ?>*\dir{>} ?<*\dir{<}, 
(37,12) *{}, (30,0)*{\Gamma_{6a}},
 \endxy
\xy
(57,8);(57,0) **\crv{(67,7)&(67,1)}, 
(63.8,2.4) *{\llcorner},
\endxy
\quad\quad 
\xy (12,6);(16,2) **@{-}, (12,2);(16,6) **@{-},
(16,6);(22,6) **\crv{(18,8)&(20,8)}, (16,2);(22,2)
**\crv{(18,0)&(20,0)}, (22,6);(22,2) **\crv{(23.5,4)}, (7,8);(12,6) **\crv{(10,8)}, (7,0);(12,2) **\crv{(10,0)}, (11,1.2) *{\llcorner}, (11,7) *{\lrcorner},(21.5,1.5)*{\llcorner},
(12,4);(16,4) **@{-}, (12,4.1);(16,4.1) **@{-},
(12,4.2);(16,4.2) **@{-}, (12,3.9);(16,3.9) **@{-},
(12,3.8);(16,3.8) **@{-},
(27,3.3);(32,3.3) **@{-} ?>*\dir{>} ?<*\dir{<}, 
(37,12) *{}, (30,0)*{\Gamma'_{6a}},
 \endxy
\xy (57,8);(57,0) **\crv{(67,7)&(67,1)}, 
 (63.8,2.4) *{\llcorner},
\endxy}

\vskip.3cm

\centerline{\xy (9,4);(17,12) **@{-}, (9,8);(13,4) **@{-},
(17,12);(21,16) **@{-}, (17,16);(21,12) **@{-}, (7,0);(9,4)
**\crv{(7,2)}, (7,12);(9,8) **\crv{(7,10)}, (15,0);(13,4)
**\crv{(15,2)}, (17,16);(15,20) **\crv{(15,18)}, (21,16);(23,20)
**\crv{(23,18)}, (21,12);(23,8) **\crv{(23,10)}, (7,12);(7,20)
**@{-}, (23,8);(23,0) **@{-},
(11,4);(11,8) **@{-}, 
(10.9,4);(10.9,8) **@{-}, 
(11.1,4);(11.1,8) **@{-}, 
(10.8,4);(10.8,8) **@{-}, 
(11.2,4);(11.2,8) **@{-},
(17,14);(21,14) **@{-}, 
(17,14.1);(21,14.1) **@{-},
(17,13.9);(21,13.9) **@{-}, 
(17,14.2);(21,14.2) **@{-},
(17,13.8);(21,13.8) **@{-},
(7,15) *{\vee},(23,5) *{\vee},(14.6,9) *{\urcorner},(8,2.7) *{\llcorner}, (21.9,17) *{\llcorner},(16,16.8) *{\ulcorner},(13.7,2.8) *{\ulcorner},
(27,8.3);(32,8.3) **@{-} ?>*\dir{>} ?<*\dir{<}, (37,12) *{},(30,5)*{\Gamma_{7a}},
 \endxy
\xy
(71,4);(63,12) **@{-}, (71,8);(67,4) **@{-}, (63,12);(59,16) **@{-},
(63,16);(59,12) **@{-}, (73,0);(71,4) **\crv{(73,2)}, (73,12);(71,8)
**\crv{(73,10)}, (65,0);(67,4) **\crv{(65,2)}, (63,16);(65,20)
**\crv{(65,18)}, (59,16);(57,20) **\crv{(57,18)}, (59,12);(57,8)
**\crv{(57,10)}, (73,12);(73,20) **@{-}, (57,8);(57,0) **@{-},
(61,12);(61,16) **@{-}, 
(61.1,12);(61.1,16) **@{-},
(60.9,12);(60.9,16) **@{-}, 
(61.2,12);(61.2,16) **@{-},
(60.8,12);(60.8,16) **@{-},
(57,5) *{\vee},(73,15) *{\vee},(65,9.5) *{\ulcorner},(58,17.1) *{\lrcorner}, (71.5,3.5) *{\lrcorner},(66.3,2.8) *{\urcorner},(63.7,16.3) *{\urcorner},
(67,6);(71,6) **@{-}, 
(67,6.1);(71,6.1) **@{-}, 
(67,5.9);(71,5.9) **@{-}, 
(67,6.2);(71,6.2) **@{-},
(67,5.8);(71,5.8) **@{-},  
\endxy
\quad\quad
\xy (7,20);(14.2,11) **@{-}, (15.8,9);(17.4,7) **@{-},
(19,5);(23,0) **@{-}, (13,20);(7,12) **@{-}, (7,12);(11.2,7) **@{-},
(12.7,5.2);(14.4,3.2) **@{-}, (15.7,1.6);(17,0) **@{-},
(17,20);(23,12) **@{-}, (13,0);(23,12) **@{-}, (7,0);(23,20) **@{-},
(10,18);(10,14) **@{-}, (10.1,18);(10.1,14) **@{-},
(9.9,18);(9.9,14) **@{-}, (10.2,18);(10.2,14) **@{-},
(9.8,18);(9.8,14) **@{-}, (18,16);(22,16) **@{-},
(18,16.1);(22,16.1) **@{-}, (18,15.9);(22,15.9) **@{-},
(18,16.2);(22,16.2) **@{-}, (18,15.8);(22,15.8) **@{-},
 (8,18.8) *{\lrcorner}, %
 (18.5,17.7) *{\ulcorner}, 
 (9.1,9.5) *{\lrcorner}, %
 (11.8,18.1) *{\urcorner}, %
 (21,9.6) *{\llcorner}, 
 (21.8,18.4) *{\llcorner}, 
 (17,12) *{\urcorner}, 
 (21.7,1.3) *{\ulcorner},%
(27,8.3);(32,8.3) **@{-} ?>*\dir{>} ?<*\dir{<}, (37,12) *{},(30,5)*{\Gamma_{8a}},
 \endxy
\xy
(73,20);(65.8,11) **@{-}, (64.2,9);(62.6,7) **@{-}, (61,5);(57,0)
**@{-}, (67,20);(73,12) **@{-}, (73,12);(68.8,7) **@{-},
(67.3,5.2);(65.6,3.2) **@{-}, (64.3,1.6);(63,0) **@{-},
(63,20);(57,12) **@{-}, (67,0);(57,12) **@{-}, (73,0);(57,20)
**@{-},
 (58,18.8) *{\lrcorner}, %
 (68.5,17.7) *{\ulcorner}, 
 (59.1,9.5) *{\lrcorner}, %
 (61.8,18.1) *{\urcorner}, %
 (71,9.6) *{\llcorner},  
 (71.8,18.4) *{\llcorner}, 
 (67,12) *{\urcorner}, 
 (71.7,1.3) *{\ulcorner},%
(60,18);(60,14) **@{-}, (60.1,18);(60.1,14) **@{-},
(59.9,18);(59.9,14) **@{-}, (60.2,18);(60.2,14) **@{-},
(59.8,18);(59.8,14) **@{-}, (68,16);(72,16) **@{-},
(68,16.1);(72,16.1) **@{-}, (68,15.9);(72,15.9) **@{-},
(68,16.2);(72,16.2) **@{-}, (68,15.8);(72,15.8) **@{-},
\endxy}

\vskip.3cm

\centerline{ \xy (7,20);(14.2,11) **@{-}, (15.8,9);(17.4,7) **@{-},
(19,5);(23,0) **@{-}, (13,20);(7,12) **@{-}, (7,12);(11.2,7) **@{-},
(12.7,5.2);(14.4,3.2) **@{-}, (15.7,1.6);(17,0) **@{-},
(17,20);(23,12) **@{-}, (13,0);(23,12) **@{-}, (7,0);(23,20) **@{-},
(10,18);(10,14) **@{-}, (10.1,18);(10.1,14) **@{-},
(9.9,18);(9.9,14) **@{-}, (10.2,18);(10.2,14) **@{-},
(9.8,18);(9.8,14) **@{-}, (18,16);(22,16) **@{-},
(18,16.1);(22,16.1) **@{-}, (18,15.9);(22,15.9) **@{-},
(18,16.2);(22,16.2) **@{-}, (18,15.8);(22,15.8) **@{-},
 (8.5,17.7) *{\ulcorner},%
 (18,18.8) *{\lrcorner}, 
 (9.1,9) *{\ulcorner}, %
 (12,18.4) *{\llcorner}, %
 (20.5,8.5) *{\urcorner}, 
 (21.8,18) *{\urcorner}, 
 (17,12.5) *{\llcorner}, 
 (21.7,1.6) *{\lrcorner},%
(27,8.3);(32,8.3) **@{-} ?>*\dir{>} ?<*\dir{<}, (37,12) *{},(30,5)*{\Gamma_{8b}},
 \endxy
\xy
(73,20);(65.8,11) **@{-}, (64.2,9);(62.6,7) **@{-}, (61,5);(57,0)
**@{-}, (67,20);(73,12) **@{-}, (73,12);(68.8,7) **@{-},
(67.3,5.2);(65.6,3.2) **@{-}, (64.3,1.6);(63,0) **@{-},
(63,20);(57,12) **@{-}, (67,0);(57,12) **@{-}, (73,0);(57,20)
**@{-},
 (58.5,17.7) *{\ulcorner}, %
 (68,18.8) *{\lrcorner}, 
 (59.1,9) *{\ulcorner}, %
 (62,18.4) *{\llcorner}, %
 (70.5,8.5) *{\urcorner}, 
 (71.8,18) *{\urcorner}, 
 (67,12.5) *{\llcorner}, 
 (71.7,1.6) *{\lrcorner},%
(60,18);(60,14) **@{-}, (60.1,18);(60.1,14) **@{-},
(59.9,18);(59.9,14) **@{-}, (60.2,18);(60.2,14) **@{-},
(59.8,18);(59.8,14) **@{-}, (68,16);(72,16) **@{-},
(68,16.1);(72,16.1) **@{-}, (68,15.9);(72,15.9) **@{-},
(68,16.2);(72,16.2) **@{-}, (68,15.8);(72,15.8) **@{-},
\endxy
\quad\quad
\xy (7,20);(14.2,11) **@{-}, (15.8,9);(17.4,7) **@{-},
(19,5);(23,0) **@{-}, (13,20);(7,12) **@{-}, (7,12);(11.2,7) **@{-},
(12.7,5.2);(14.4,3.2) **@{-}, (15.7,1.6);(17,0) **@{-},
(17,20);(23,12) **@{-}, (13,0);(23,12) **@{-}, (7,0);(23,20) **@{-},
(10,18);(10,14) **@{-}, (10.1,18);(10.1,14) **@{-},
(9.9,18);(9.9,14) **@{-}, (10.2,18);(10.2,14) **@{-},
(9.8,18);(9.8,14) **@{-}, (18,16);(22,16) **@{-},
(18,16.1);(22,16.1) **@{-}, (18,15.9);(22,15.9) **@{-},
(18,16.2);(22,16.2) **@{-}, (18,15.8);(22,15.8) **@{-},
 (8,18.8) *{\lrcorner}, %
 (18,18.8) *{\lrcorner}, 
 (9.1,9.5) *{\lrcorner}, %
 (11.8,18.1) *{\urcorner}, %
 (20.5,8.5) *{\urcorner}, 
 (21.8,18) *{\urcorner}, 
 (17,12.5) *{\llcorner}, 
 (21.7,1.3) *{\ulcorner},%
(27,8.3);(32,8.3) **@{-} ?>*\dir{>} ?<*\dir{<}, (37,12) *{},(30,5)*{\Gamma_{8c}},
 \endxy
\xy
(73,20);(65.8,11) **@{-}, (64.2,9);(62.6,7) **@{-}, (61,5);(57,0)
**@{-}, (67,20);(73,12) **@{-}, (73,12);(68.8,7) **@{-},
(67.3,5.2);(65.6,3.2) **@{-}, (64.3,1.6);(63,0) **@{-},
(63,20);(57,12) **@{-}, (67,0);(57,12) **@{-}, (73,0);(57,20)
**@{-},
(58,18.8) *{\lrcorner}, %
 (68,18.8) *{\lrcorner}, 
 (59.1,9.5) *{\lrcorner}, %
 (61.8,18.1) *{\urcorner}, %
 (70.5,8.5) *{\urcorner}, 
 (71.8,18) *{\urcorner}, 
 (67,12.5) *{\llcorner}, 
 (71.7,1.3) *{\ulcorner},%
(60,18);(60,14) **@{-}, (60.1,18);(60.1,14) **@{-},
(59.9,18);(59.9,14) **@{-}, (60.2,18);(60.2,14) **@{-},
(59.8,18);(59.8,14) **@{-}, (68,16);(72,16) **@{-},
(68,16.1);(72,16.1) **@{-}, (68,15.9);(72,15.9) **@{-},
(68,16.2);(72,16.2) **@{-}, (68,15.8);(72,15.8) **@{-},
\endxy}
\caption{Oriented Yoshikawa moves of type $4$-$8$}
\label{fig-om2}
\end{figure}

We first prove Theorem \ref{main-thm-1-2} and then Theorem \ref{main-thm-1-3}. The proof of Theorem \ref{main-thm-1-2} consists of showing that all oriented Yoshikawa moves in Fig.~\ref{fig-om2} can be realized by a finite sequence of plane isotopies, the moves $\Gamma_i (1 \leq i \leq 8)$ and $\Gamma_i' (i=1,4,6)$. This will proceed in several lemmas as the followings.

\begin{lemma}\label{main-thm-3-3-lem1}
The move $\Gamma_{4a}$ is realized by a finite sequence of the moves $\Gamma_{2a},$ $\Gamma_{2b},$ $\Gamma_{3b},$ $\Gamma_{3c},$ $\Gamma_{4},$ $\Gamma_{5}$ and plane isotopies.
The move $\Gamma'_{4a}$ is realized by a finite sequence of the moves $\Gamma_{2},$ $\Gamma_{2b},$ $\Gamma_{3e},$ $\Gamma_{3f},$ $\Gamma'_{4},$ $\Gamma_{5}$ and plane isotopies.
\end{lemma}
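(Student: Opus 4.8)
The plan is to realize $\Gamma_{4a}$ by a local picture-chasing argument of exactly the same flavour as the unoriented Lemma~\ref{main-thm-3-2-lem1} (whose proof is the diagram in Fig.~\ref{fig-m4}), but now keeping careful track of orientations on every strand. First I would observe that $\Gamma_{4a}$ and $\Gamma_4$ differ only in the orientation of the strand that passes over (resp. under) the marked vertex: in $\Gamma_4$ the two parallel strands near the vertex are coherently oriented, while in $\Gamma_{4a}$ they are anti-parallel (this is precisely why $\Gamma_{4a}$ is not already in the list $\mathfrak S_1$). So the strategy is: starting from the left-hand side of $\Gamma_{4a}$, use oriented Reidemeister moves to slide the over-strand around the marked vertex into the configuration appearing in $\Gamma_4$, apply $\Gamma_4$, and then slide back. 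The auxiliary moves needed to perform the ``sliding'' are the oriented $\Omega_2$ moves $\Gamma_{2a},\Gamma_{2b}$ (to create/cancel a pair of crossings on the correctly oriented side), the oriented $\Omega_3$ moves $\Gamma_{3b},\Gamma_{3c}$ (to push the strand across the two edges emanating from the vertex), and the move $\Gamma_5$ (to push the strand past the marked vertex from the side where the marker sits); the orientations on these auxiliary moves are forced by the orientation data in $\Gamma_{4a}$, which is how one reads off precisely which of the four oriented versions of $\Omega_2$ and $\Omega_3$ occur. The mirror statement for $\Gamma'_{4a}$ is handled identically, with $\Gamma'_4$ in place of $\Gamma_4$ and the correspondingly reflected oriented moves $\Gamma_{2},\Gamma_{2b},\Gamma_{3e},\Gamma_{3f},\Gamma_5$.

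Concretely, I would present the proof as a figure (say Fig.~\ref{fig-om4a}) exhibiting the sequence of intermediate diagrams, mirroring the style already used for Lemmas \ref{main-thm-3-2-lem1}--\ref{main-thm-3-2-lem5}: each arrow in the figure is labelled by the single Yoshikawa move applied, and one checks frame by frame that (i) the underlying unoriented move is the claimed one (so that the already-established unoriented Lemma~\ref{main-thm-3-2-lem1} guarantees the combinatorics close up) and (ii) the orientations match, so that the move used is the specific oriented representative named in the statement. Since Theorem~\ref{main-thm-1-1} tells us the unoriented $\Omega_{4a}$ is realized by $\Omega_2,\Omega_4$ and plane isotopies, the only genuinely new content here is verifying that an \emph{oriented} lift of that realization exists using \emph{only} the listed oriented moves; the list in the statement ($\Gamma_{2a},\Gamma_{2b},\Gamma_{3b},\Gamma_{3c},\Gamma_4,\Gamma_5$) is slightly longer than the unoriented one ($\Omega_2,\Omega_4$) precisely because a single unoriented $\Omega_2$ or $\Omega_3$ can lift to distinct oriented moves at different stages of the argument, and because pushing a strand past the marked vertex in the oriented setting may require $\Gamma_5$ rather than being absorbable into plane isotopy.

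The main obstacle will be bookkeeping rather than conceptual: one must choose the intermediate diagrams so that every oriented Reidemeister move that appears is genuinely one of $\Gamma_{2a},\Gamma_{2b},\Gamma_{3b},\Gamma_{3c}$ (and not, say, $\Gamma_{2c}$ or a non-cyclic $\Gamma_3$ variant outside the list), and this forces a specific routing of the moving strand relative to the marker at the vertex. If a naive routing produces a forbidden oriented move, the fix is to detour the strand around the other side of the vertex, trading one oriented $\Omega_3$ type for another and possibly inserting a cancelling $\Gamma_{2a}$/$\Gamma_{2b}$ pair; since all of $\Gamma_{2a},\Gamma_{2b}$ and both relevant oriented $\Omega_3$ moves are available, such a detour always succeeds. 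For $\Gamma'_{4a}$ one repeats the argument on the mirror picture, where the $\Omega_3$ moves that arise are the reflected pair $\Gamma_{3e},\Gamma_{3f}$ and the $\Omega_2$ move picked up is $\Gamma_2$ together with $\Gamma_{2b}$; the verification is line-by-line symmetric.
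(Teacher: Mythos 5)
Your overall strategy---treat $\Gamma_{4a}$ as the ``other'' orientation of $\Omega_4$, conjugate by oriented $\Omega_2$-, $\Omega_3$- and $\Omega_5$-type moves so that the configuration at the marked vertex becomes the one in $\Gamma_4$, apply $\Gamma_4$, and undo the conjugation---is exactly what the paper does: its proof of Lemma~\ref{main-thm-3-3-lem1} is nothing but the explicit sequence of diagrams in Fig.~\ref{fig-pf-oym4-2}, each arrow labelled by one of $\Gamma_{2a},\Gamma_{2b},\Gamma_{3b},\Gamma_{3c},\Gamma_4,\Gamma_5$ (and the mirror list for $\Gamma'_{4a}$). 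However, your write-up stops at the point where the actual proof begins. For a lemma of this kind the entire mathematical content is the explicit chain of intermediate diagrams; the assertion that ``such a detour always succeeds'' because the needed oriented $\Omega_2$ and $\Omega_3$ variants ``are available'' is precisely the claim to be verified, not an argument for it. In particular, nothing in your text certifies that the realization can be arranged to use only the non-cyclic triangle moves $\Gamma_{3b},\Gamma_{3c}$ (respectively $\Gamma_{3e},\Gamma_{3f}$) and not some other oriented $\Omega_3$ variant, nor that a $\Gamma_5$ conjugation with the stated orientation actually closes up the marker bookkeeping. Until the frames are drawn and checked, the lemma is unproved.

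There is also a notational misstep worth flagging: you describe $\Gamma_{4a}$ as an oriented lift of the unoriented move $\Omega_{4a}$ of Fig.~\ref{fig-r45678n} and invoke Lemma~\ref{main-thm-3-2-lem1} ($\Omega_{4a}$ realized by $\Omega_2,\Omega_4$) as the ``unoriented skeleton'' to be lifted. In the paper's conventions these are different objects: $\Omega_{4a}$ is the marker-rotated mirror of $\Omega_4$, whereas $\Gamma_{4a}$ in Fig.~\ref{fig-om2} has the \emph{same} underlying unoriented diagram as $\Omega_4$ (horizontal marker) and differs from $\Gamma_4$ only in orientation. Consequently there is no two-step unoriented realization to lift---the underlying unoriented move is the single generator $\Omega_4$---and the presence of $\Gamma_{3b},\Gamma_{3c},\Gamma_5$ in the list cannot be explained as oriented resolutions of unoriented $\Omega_2$ or $\Omega_3$ moves occurring in Lemma~\ref{main-thm-3-2-lem1}; they arise purely from the orientation-adjusting conjugation. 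This confusion does not invalidate the (correct) plan in your first paragraph, but the appeal to Theorem~\ref{main-thm-1-1} and Lemma~\ref{main-thm-3-2-lem1} in your second paragraph is inapposite and should be removed.
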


\begin{proof}
See Fig.~\ref{fig-pf-oym4-2}. 
\end{proof}

\begin{figure}[h]
\begin{center}
\resizebox{0.85\textwidth}{!}{%
  \includegraphics{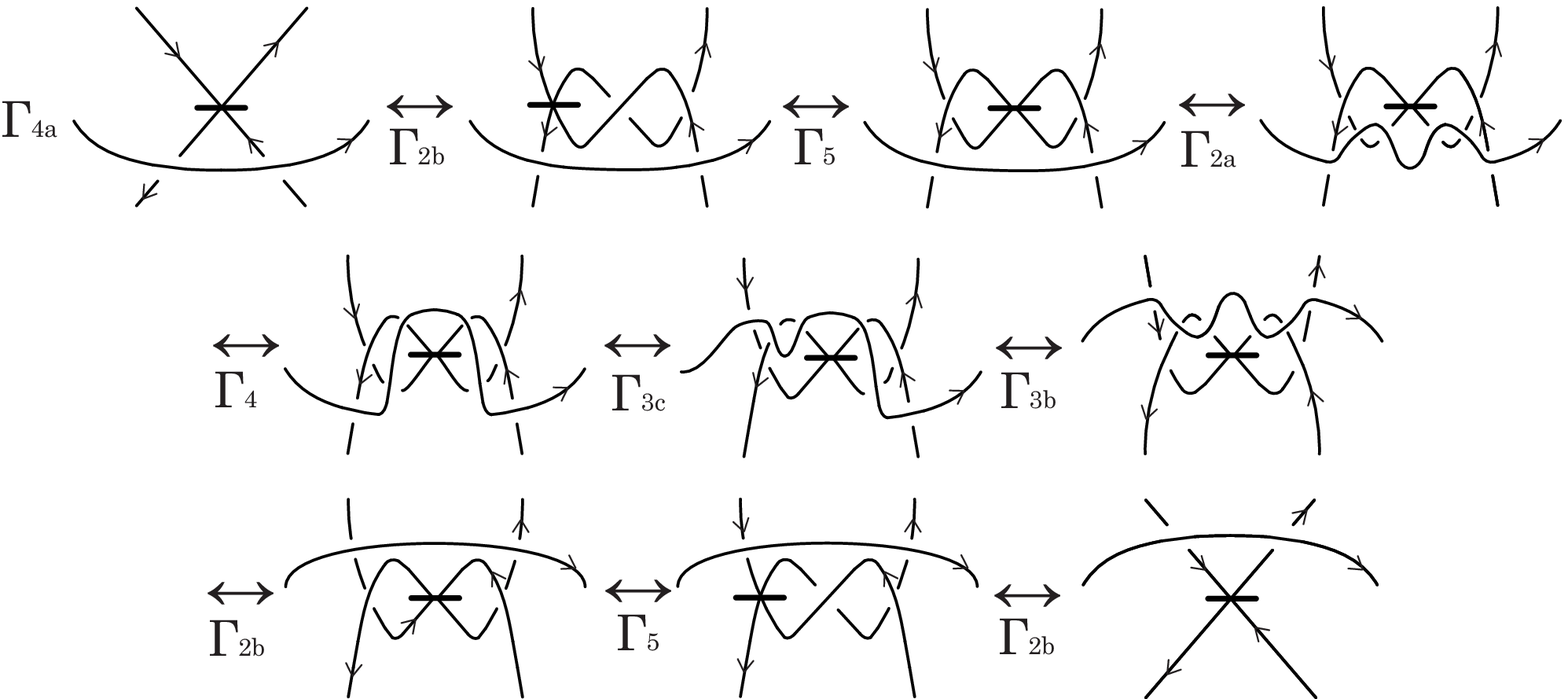} }
\vskip 0.3cm
\resizebox{0.85\textwidth}{!}{%
  \includegraphics{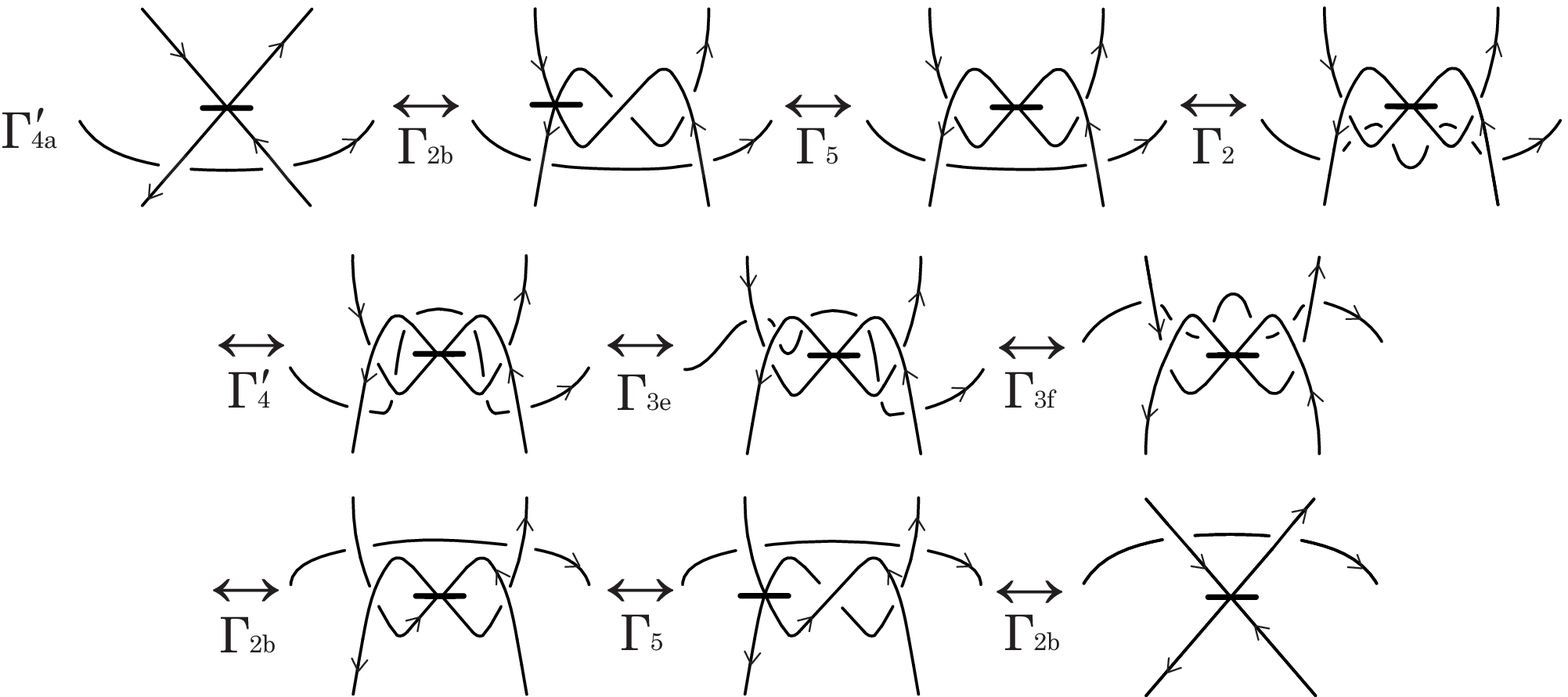} }
\caption{Oriented Yoshikawa moves $\Gamma_{4a}$ and $\Gamma'_{4a}$}\label{fig-pf-oym4-2}
\end{center}
\end{figure}

\begin{lemma}\label{main-thm-3-3-lem4}
Let $\Gamma_{5a}$ denote the oriented Yoshikawa move of type 5 shown in Fig.~\ref{fig-pf-oym5-1}. Then it is realized by a finite sequence of the moves $\Gamma'_{1},$ $\Gamma_{1b},$  $\Gamma_{2a},$ $\Gamma_{2b},$ $\Gamma_{3b},$ $\Gamma_{4},$ $\Gamma'_{4},$ $\Gamma_{5}$ and plane isotopies.
\begin{figure}[h]
\begin{center}
\resizebox{0.40\textwidth}{!}{%
\includegraphics{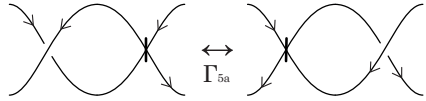} }
\caption{Oriented Yoshikawa move $\Gamma_{5a}$}
\label{fig-pf-oym5-1}
\end{center}
\end{figure}
\end{lemma}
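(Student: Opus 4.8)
The plan is to produce $\Gamma_{5a}$ by an explicit sequence of local moves, in the same spirit as Lemmas~\ref{main-thm-3-2-lem1}-\ref{main-thm-3-2-lem5} and Lemma~\ref{main-thm-3-3-lem1}. The key observation is that $\Gamma_{5a}$ and the available move $\Gamma_5$ both consist of a marked vertex sitting next to a single classical crossing, and they differ only by local data near that vertex–crossing pair (the position of the marker relative to the crossing and the orientations of the strands involved). The strategy is therefore: starting from the left-hand diagram of $\Gamma_{5a}$, create a short \emph{detour} of one strand around the marked vertex so that the picture acquires, as a subdiagram, the left-hand side of $\Gamma_5$ (possibly after a plane isotopy and a reversal of all orientations, which as remarked above leaves $\Gamma_5$ unchanged); then apply $\Gamma_5$; then run the detour backwards to land on the right-hand diagram of $\Gamma_{5a}$.

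Concretely, the detour is built and removed as follows. First I would use $\Gamma_{2a}$ and $\Gamma_{2b}$ to introduce the two extra crossings that exhibit a portion of the relevant strand as an arc lying entirely over, or entirely under, the marked vertex; then $\Gamma_4$ (for an over-arc) or $\Gamma'_4$ (for an under-arc) pushes that arc from one side of the vertex to the other, putting the configuration into the form required by $\Gamma_5$. After applying $\Gamma_5$, the crossings created during the set-up no longer sit in canonical position: I would slide them past one another with the triangle move $\Gamma_{3b}$, correct the local writhe by inserting or deleting the curls $\Gamma'_1$ and $\Gamma_{1b}$, and finally cancel the two auxiliary crossings with $\Gamma_{2a}$ and $\Gamma_{2b}$. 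The composite of this sequence carries the left-hand diagram of $\Gamma_{5a}$ to its right-hand diagram, and every intermediate move is plane-isotopic to one of $\Gamma'_1,\Gamma_{1b},\Gamma_{2a},\Gamma_{2b},\Gamma_{3b},\Gamma_4,\Gamma'_4,\Gamma_5$, which is the assertion.

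The main difficulty is orientation bookkeeping. Because the permitted auxiliary Reidemeister moves are only the two type-I moves $\Gamma'_1,\Gamma_{1b}$, the two type-II moves $\Gamma_{2a},\Gamma_{2b}$, and the single type-III move $\Gamma_{3b}$, one must arrange the detour so that, at each step, the bigon being born or killed and the central triangle of each $\Omega_3$-application carry exactly the orientation of one of these moves — in particular, so that every type-III step is genuinely the non-cyclic triangle $\Gamma_{3b}$ and not some other oriented $\Omega_3$ move. Fixing a routing of the detour for which all of these compatibility conditions hold simultaneously is the delicate point; once such a routing is chosen, checking the sequence is a routine verification, and the resulting chain of diagrams is what we record.
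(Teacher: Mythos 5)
Your overall strategy---build a detour with oriented $\Omega_2$ moves, transport an arc across the marked vertex with $\Gamma_4$ or $\Gamma'_4$, apply $\Gamma_5$, and then dismantle the detour using $\Gamma_{3b}$, the curls $\Gamma'_1,\Gamma_{1b}$, and the inverse type-II moves---has exactly the shape of the paper's argument, whose entire proof is an explicit chain of diagrams (Fig.~\ref{fig-pf-oym5-2}) realizing $\Gamma_{5a}$ by precisely the moves $\Gamma'_1,\Gamma_{1b},\Gamma_{2a},\Gamma_{2b},\Gamma_{3b},\Gamma_4,\Gamma'_4,\Gamma_5$. So the approach is the right one.

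The gap is that you never actually exhibit the chain. For a lemma of this kind the explicit sequence of diagrams \emph{is} the proof: the assertion is not merely that \emph{some} detour works, but that one exists using exactly the listed oriented moves and no others---in particular only the type-I moves $\Gamma'_1$ and $\Gamma_{1b}$ (not $\Gamma_1$ or $\Gamma_{1a}$), only the type-II moves $\Gamma_{2a},\Gamma_{2b}$, and only the single oriented triangle move $\Gamma_{3b}$ among the eight oriented versions of $\Omega_3$. You correctly identify this orientation bookkeeping as ``the delicate point,'' but then defer it (``once such a routing is chosen, checking the sequence is a routine verification'') without ever choosing a routing or performing the check; it is not automatic that a routing exists whose every auxiliary move lands in this particular five-element subset of the oriented Reidemeister moves. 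Since the exact list produced by this lemma is quoted verbatim in Lemmas~\ref{main-thm-3-3-lem7}--\ref{main-thm-3-3-lem9} (where occurrences of $\Gamma_{5a}$ are ``replaced with'' these moves) and hence feeds into Theorems~\ref{main-thm-1-2} and~\ref{main-thm-1-3}, leaving the list unverified leaves the lemma, and everything downstream of it, unproved. To close the gap you must draw, or otherwise specify unambiguously, the intermediate diagrams and confirm at each step which oriented move is being performed.
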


\begin{proof}
See Fig.~\ref{fig-pf-oym5-2}.
\end{proof}

\begin{figure}[h]
\begin{center}
\resizebox{0.90\textwidth}{!}{%
  \includegraphics{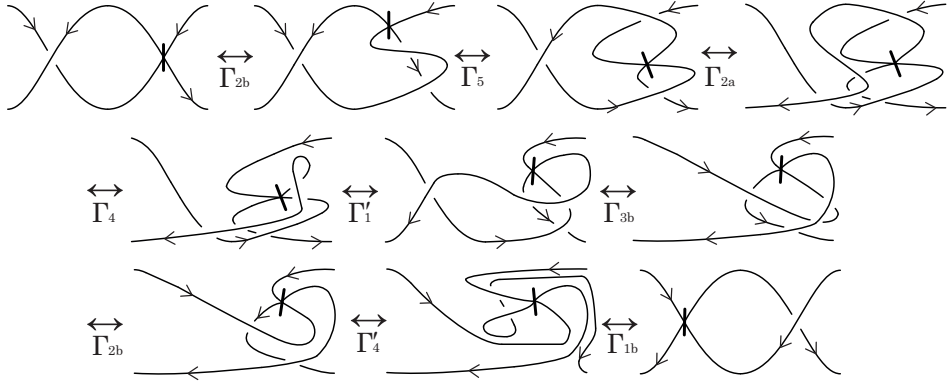} }
\caption{A realization of $\Gamma_{5a}$}
\label{fig-pf-oym5-2}
\end{center}
\end{figure}

\begin{lemma}\label{main-thm-3-3-lem7}
The move $\Gamma_{6a}$ is realized by a finite sequence of the moves $\Gamma'_{1},$ $\Gamma_{1b},$ $\Gamma_{2},$ $\Gamma_{2a},$ $\Gamma_{2b},$ $\Gamma_{3b},$ $\Gamma_{4},$ $\Gamma'_{4},$ $\Gamma_{5}$, $\Gamma_{6}$ and plane isotopies.
The move $\Gamma'_{6a}$ is realized by a finite sequence of the moves $\Gamma'_{1},$ $\Gamma_{5},$ $\Gamma'_{6}$. 
\end{lemma}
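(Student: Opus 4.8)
The plan is to produce, for each of the two moves, an explicit chain of the listed moves together with planar isotopies carrying one side of the move to the other, exactly as in the preceding lemmas of this section; the chains would be displayed in a figure.

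I would treat $\Gamma'_{6a}$ first, since only three moves are available and the figure is short. Starting from the side of $\Gamma'_{6a}$ carrying the marked vertex (with horizontal marker) and its attached loop, I would use a Reidemeister~I move $\Gamma'_1$ to introduce a curl on an arc incident to the vertex, then reposition that curl relative to the vertex by an application of $\Gamma_5$ (the move that transports a classical crossing past a marked vertex); after this the marked-vertex-and-loop part is displayed exactly in the form appearing in $\Gamma'_6$, up to the residual curl, so one application of $\Gamma'_6$ deletes the vertex and loop and leaves a turnback carrying a curl, which a final $\Gamma'_1$ removes. This realizes $\Gamma'_{6a}$ using only $\Gamma'_1$, $\Gamma_5$ and $\Gamma'_6$.

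For $\Gamma_{6a}$ the situation is much more involved: the orientations in $\Gamma_{6a}$ are not the ones that make the left side of $\Gamma_6$ directly applicable, and no move in the list reverses orientations, so one cannot invoke $\Gamma_6$ on the nose. The idea is to build a \emph{detour} around the marked vertex. Beginning from the left side of $\Gamma_{6a}$, I would first insert auxiliary crossings on the two arcs entering the vertex by the Reidemeister~I moves $\Gamma'_1$ and $\Gamma_{1b}$; then push an arc past the marked vertex using $\Gamma_4$ and $\Gamma'_4$ (both are needed, for the over- and the under-passing portions of the detour) and slide the auxiliary crossings across the vertex with $\Gamma_5$; and reorganize the resulting arcs with a sequence of Reidemeister~II moves $\Gamma_2$, $\Gamma_{2a}$, $\Gamma_{2b}$ and a Reidemeister~III move $\Gamma_{3b}$, until the local picture becomes a copy of the left side of $\Gamma_6$ surrounded by a collection of curls. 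Applying $\Gamma_6$ removes the vertex and its loop, and the remaining curls are then unwound by $\Gamma'_1$, $\Gamma_{1b}$ together with the same Reidemeister~II and III moves, yielding the right side of $\Gamma_{6a}$.

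The hard part is engineering this detour for $\Gamma_{6a}$. Since orientations cannot be flipped locally, the arc carried around the marked vertex is forced to over-cross along one part of its route and under-cross along another, which in turn fixes the signs of all the auxiliary crossings; one must then check that, with those signs, the crossings can indeed be created and later cancelled using only the short list $\Gamma_2,\Gamma_{2a},\Gamma_{2b},\Gamma_{3b}$ of oriented Reidemeister~II and III moves — this is the step where the structure of the minimal generating sets of oriented Reidemeister moves recalled in Section~\ref{sect-mg-crm} (in particular Theorem~\ref{thm-polyak}) is used implicitly — and that the writhes of the auxiliary curls cancel, so that no $\Omega_1$-type move besides $\Gamma'_1$ and $\Gamma_{1b}$ is ever needed. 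Verifying these two points is the real content of the figure.
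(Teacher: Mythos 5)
Your treatment of $\Gamma'_{6a}$ is essentially the paper's: the figure in the paper (Fig.~\ref{fig-pf-oym6-2}) realizes $\Gamma'_{6a}$ by exactly the chain you describe --- a curl introduced by $\Gamma'_1$, a transport past the marked vertex by $\Gamma_5$, an application of $\Gamma'_6$, and a final $\Gamma'_1$ to kill the residual curl --- so that half is fine.

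For $\Gamma_{6a}$ there is a genuine gap, and it is precisely the point you flag yourself as ``the real content of the figure.'' You propose to engineer a direct detour around the marked vertex using the full list $\Gamma'_1,\Gamma_{1b},\Gamma_2,\Gamma_{2a},\Gamma_{2b},\Gamma_{3b},\Gamma_4,\Gamma'_4,\Gamma_5,\Gamma_6$, but you never verify that the auxiliary crossings can be created, slid past the vertex, and cancelled with exactly those oriented moves; you only assert that this must be checked. The paper avoids this entirely by a factorization you are missing: Fig.~\ref{fig-pf-oym6-2} exhibits $\Gamma_{6a}$ as a composition of just three moves, $\Gamma'_1$, $\Gamma_{5a}$, and $\Gamma_6$, where $\Gamma_{5a}$ is the auxiliary oriented type-5 move already realized in Lemma~\ref{main-thm-3-3-lem4} by $\Gamma'_1,\Gamma_{1b},\Gamma_{2a},\Gamma_{2b},\Gamma_{3b},\Gamma_4,\Gamma'_4,\Gamma_5$. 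The long move list in the statement of the lemma is not the record of a single elaborate figure; it is inherited wholesale from substituting the realization of $\Gamma_{5a}$. Your ``detour'' is, in effect, an unrolled version of $\Gamma'_1\cdot\Gamma_{5a}\cdot\Gamma_6$, but without identifying $\Gamma_{5a}$ as the intermediate move you are left having to re-derive the content of Lemma~\ref{main-thm-3-3-lem4} inline, and that derivation is exactly the step your proposal leaves undone. To close the gap, either produce the explicit diagram chain, or (better) observe that after one $\Gamma'_1$ the configuration differs from the left side of $\Gamma_6$ by a single $\Gamma_{5a}$ and cite the earlier lemma.
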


\begin{proof}
It follows from Fig.~\ref{fig-pf-oym6-2} that the moves $\Gamma_{6a}$ is realized by the moves $\Gamma'_{1},$ $\Gamma_{5a},$ $\Gamma_{6}$ and plane isotopies. By Lemma \ref{main-thm-3-3-lem4}, the move $\Gamma_{5a}$ is realized by a finite sequence of the moves $\Gamma'_{1},$ $\Gamma_{1b},$ $\Gamma_{2},$ $\Gamma_{2a},$ $\Gamma_{2b},$ $\Gamma_{3b},$ $\Gamma_{4},$ $\Gamma'_{4},$ $\Gamma_{5}$ and plane isotopies. This gives the assertion for $\Gamma_{6a}$. It is immediate from Fig.~\ref{fig-pf-oym6-2} that the move $\Gamma'_{6a}$ is realized by a finite sequence of the moves $\Gamma'_{1},$ $\Gamma_{5},$ $\Gamma'_{6}$. This completes the proof.
\end{proof}

\begin{figure}[h]
\begin{center}
\resizebox{0.75\textwidth}{!}{%
  \includegraphics{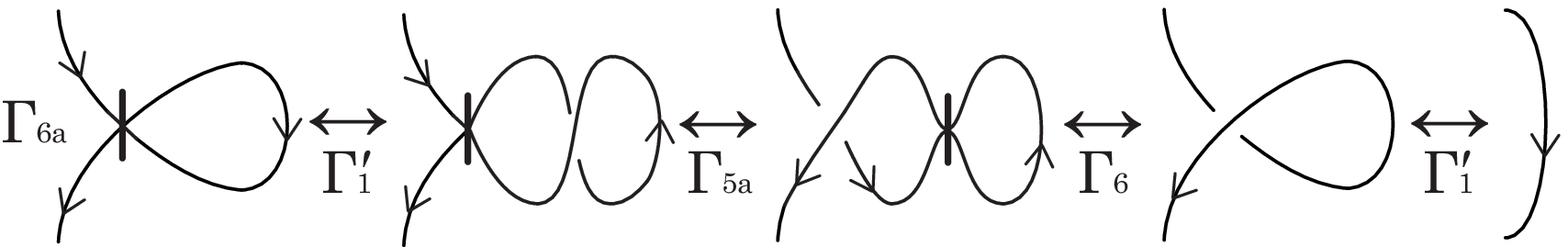} }
\vskip 0.3cm
\resizebox{0.75\textwidth}{!}{%
  \includegraphics{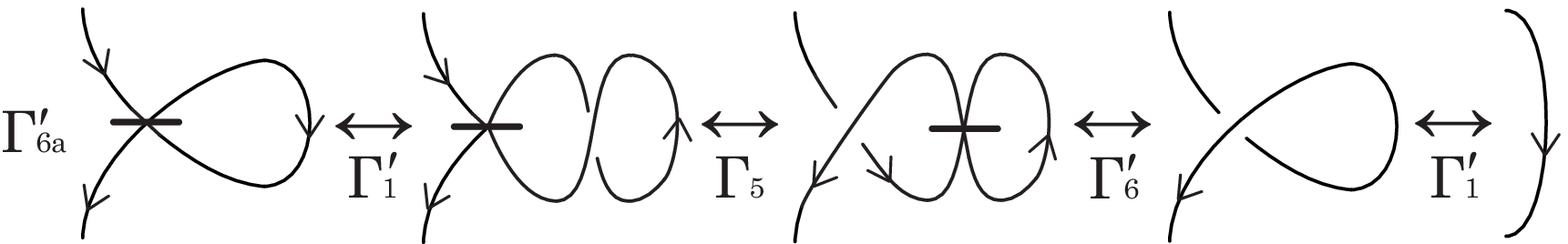} }
\caption{Oriented Yoshikawa move $\Gamma_{6a}$ and $\Gamma'_{6a}$}
\label{fig-pf-oym6-2}
\end{center}
\end{figure}

\begin{lemma}\label{main-thm-3-3-lem8}
The move $\Gamma_{7a}$ is realized by a finite sequence of the moves $\Gamma'_{1},$ $\Gamma_{1a},$ $\Gamma_{1b},$ $\Gamma_{2},$ $\Gamma_{2b},$ $\Gamma_{2c},$ $\Gamma_{3b},$ $\Gamma_{4},$ $\Gamma'_{4},$ $\Gamma_{4a},$ $\Gamma_{5},$ $\Gamma_{5a},$ $\Gamma_{7}$ and plane isotopies.
\end{lemma}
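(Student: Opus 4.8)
The plan is to produce an explicit finite sequence of oriented marked graph diagrams, each obtained from its predecessor by a plane isotopy or by one of the listed moves, carrying the left-hand side of $\Gamma_{7a}$ to its right-hand side. The guiding principle is the usual \emph{detour} argument for passing from one oriented version of a move to another: the move $\Gamma_{7a}$ has the same underlying (unoriented) domain and range as the already-available move $\Gamma_7$, differing only in the orientations of the strands, and since an orientation cannot be reversed, I instead reroute one of the two non-singular arcs involved in $\Gamma_{7a}$ so that, locally, the configuration becomes a genuine instance of $\Gamma_7$ (together with $\Gamma_{5a}$, which was realized in Lemma~\ref{main-thm-3-3-lem4}); then that move is applied and the rerouting is undone.

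First I would use the oriented Reidemeister~II moves $\Gamma_2$, $\Gamma_{2b}$, $\Gamma_{2c}$ to pull the arc whose orientation is ``wrong'' relative to $\Gamma_7$ out from between the two marked vertices, creating a small finger. Pushing this finger across the remainder of the tangle forces it to travel past a classical crossing and past the two marked vertices: each passage past a crossing is an instance of $\Gamma_{3b}$; each passage of the finger over or under a marked vertex is an instance of $\Gamma_4$, $\Gamma'_4$, or $\Gamma_{4a}$; and the creation and absorption of the kinks that the finger necessarily picks up along the way are handled by $\Gamma'_1$, $\Gamma_{1a}$, $\Gamma_{1b}$. When the finger has been carried all the way around, the local picture at the two marked vertices is precisely the domain of $\Gamma_7$ (respectively $\Gamma_{5a}$), now with orientations matching, so I apply $\Gamma_7$ and $\Gamma_{5a}$ (hence also $\Gamma_5$, used inside the realization of $\Gamma_{5a}$). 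Finally the detour is reversed by running the same $\Gamma_2$-, $\Gamma_{2b}$-, $\Gamma_{2c}$-, $\Gamma_{3b}$-, $\Gamma_4$-, $\Gamma'_4$-, $\Gamma_{4a}$-, $\Gamma'_1$-, $\Gamma_{1a}$-, $\Gamma_{1b}$-moves backwards, which yields the right-hand side of $\Gamma_{7a}$.

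The main obstacle is the orientation bookkeeping. One must verify that at every intermediate stage the orientations of the four edges meeting each of the two marked vertices remain admissible, so that each elementary step is genuinely one of $\Gamma_4$, $\Gamma'_4$, $\Gamma_{4a}$, $\Gamma_5$, $\Gamma_{5a}$, $\Gamma_7$ (and not some orientation variant lying outside the list of Theorem~\ref{main-thm-1-2}), and that the detour path for the finger can be chosen so that it never has to cross the configuration in a way demanding a move not on the list. This is best checked against a single explicit diagram sequence rather than argued in the abstract; the required sequence is displayed in Fig.~\ref{fig-pf-oym7-2}, and inspecting the orientations stage by stage there confirms that only the stated moves and plane isotopies are used. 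Combining this with Lemmas~\ref{main-thm-3-3-lem1} and~\ref{main-thm-3-3-lem4}, which express $\Gamma_{4a}$ and $\Gamma_{5a}$ through moves of $\mathfrak S_1$, then shows that $\Gamma_{7a}$ is generated by $\mathfrak S_1$ and plane isotopies, as needed for Theorem~\ref{main-thm-1-2}.
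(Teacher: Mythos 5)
Your overall strategy is the right one, and it is in fact the strategy the paper itself uses: realize $\Gamma_{7a}$ by an explicit diagrammatic detour that reduces the local configuration to a genuine instance of $\Gamma_7$ together with $\Gamma_{5a}$ and oriented type-$4$ moves, and then invoke Lemma~\ref{main-thm-3-3-lem4} to trade $\Gamma_{5a}$ for moves of $\mathfrak S_1$. The paper's proof consists precisely of the chain of diagrams in Fig.~\ref{fig-pf-oym7} (which uses $\Gamma_{1a}$, $\Gamma'_1$, $\Gamma_{2b}$, $\Gamma_{2c}$, $\Gamma_{4a}$, $\Gamma'_4$, $\Gamma_5$, $\Gamma_{5a}$, $\Gamma_7$ and plane isotopies) followed by that substitution.

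The gap is that you never actually carry out the verification. For a lemma of this kind the explicit sequence of diagrams \emph{is} the proof: the only nontrivial content is that the detour can be routed so that every elementary step is one of the finitely many oriented variants on the allowed list --- $\Gamma_{3b}$ rather than some other oriented triangle move, $\Gamma_{4a}$ or $\Gamma'_4$ rather than an unlisted orientation of $\Omega_4$, and so on. You correctly identify this orientation bookkeeping as ``the main obstacle,'' but you then resolve it by appealing to a figure (your ``Fig.~fig-pf-oym7-2'') that you do not supply and that does not exist in the paper. Asserting that the required sequence ``is displayed'' in an unexhibited figure is assuming exactly what has to be shown, since an a priori detour argument could just as well force an oriented move outside the generating set. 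To complete the argument you must either reproduce the explicit chain of diagrams of Fig.~\ref{fig-pf-oym7} or construct an equivalent one, checking the orientation of every strand at every stage; only then does the reduction of $\Gamma_{5a}$ via Lemma~\ref{main-thm-3-3-lem4} (and of $\Gamma_{4a}$ via Lemma~\ref{main-thm-3-3-lem1}, if needed for Theorem~\ref{main-thm-1-2}) finish the job.
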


\begin{proof}
It follows from Fig.~\ref{fig-pf-oym7} that the move $\Gamma_{7a}$ is realized by a finite sequence of the moves $\Gamma_{1a},$ $\Gamma'_{1},$ $\Gamma_{2b},$ $\Gamma_{2c},$ $\Gamma_{4a},$ $\Gamma'_{4},$ $\Gamma_{5},$ $\Gamma_{5a},$ $\Gamma_{7}$ and plane isotopies. By Lemma \ref{main-thm-3-3-lem4}, the move $\Gamma_{5a}$ can be replaced with the moves $\Gamma'_{1},$ $\Gamma_{1b},$ $\Gamma_{2},$ $\Gamma_{2a},$ $\Gamma_{2b},$ $\Gamma_{3b},$ $\Gamma_{4},$ $\Gamma'_{4},$ $\Gamma_{5}$. This gives the assertion of the lemma.
\end{proof}

\begin{figure}
\begin{center}
\resizebox{0.95\textwidth}{!}{%
\includegraphics{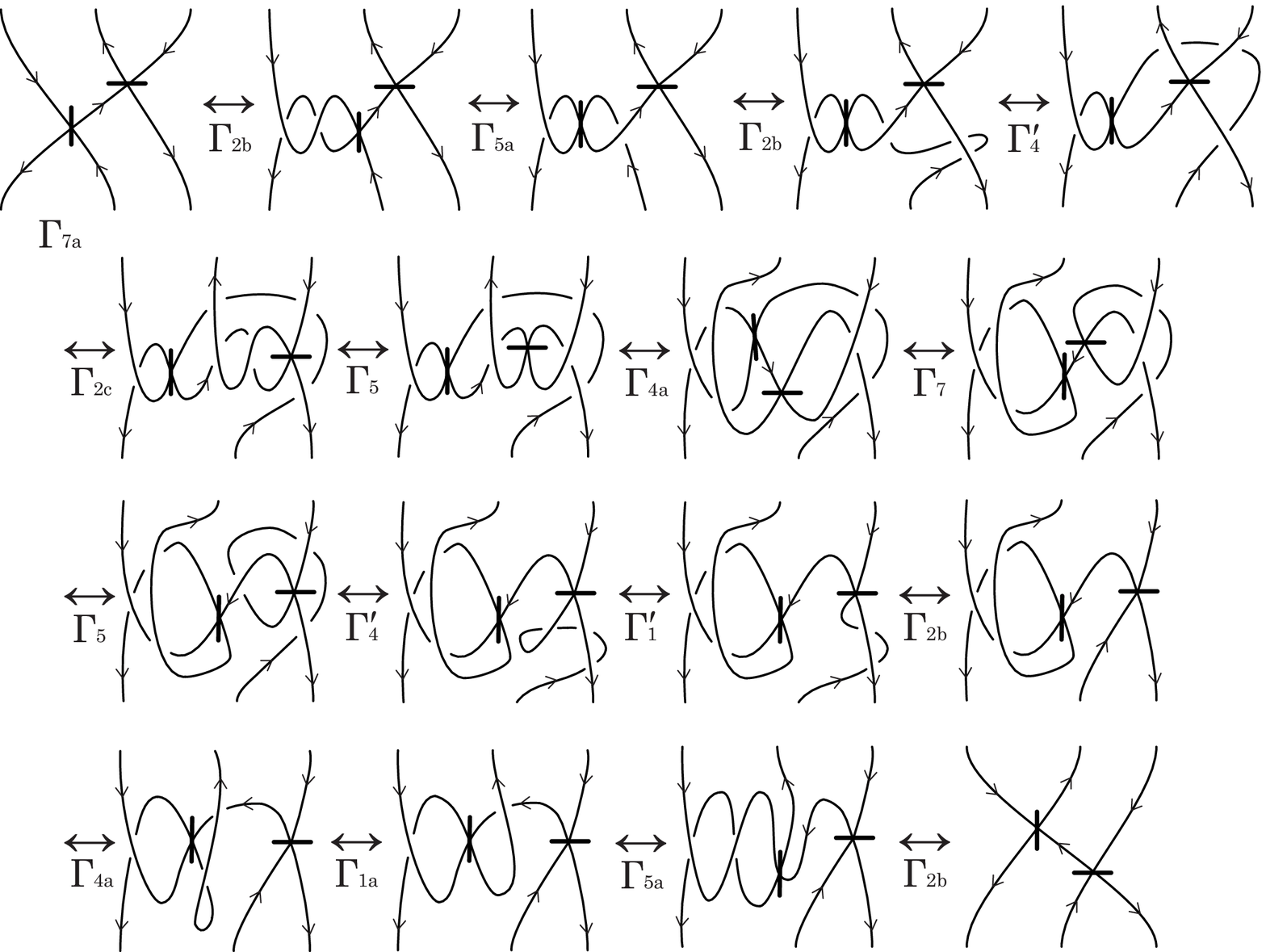} }
\caption{Oriented Yoshikawa move $\Gamma_{7a}$}
\label{fig-pf-oym7}
\end{center}
\begin{center}
\resizebox{0.80\textwidth}{!}{%
  \includegraphics{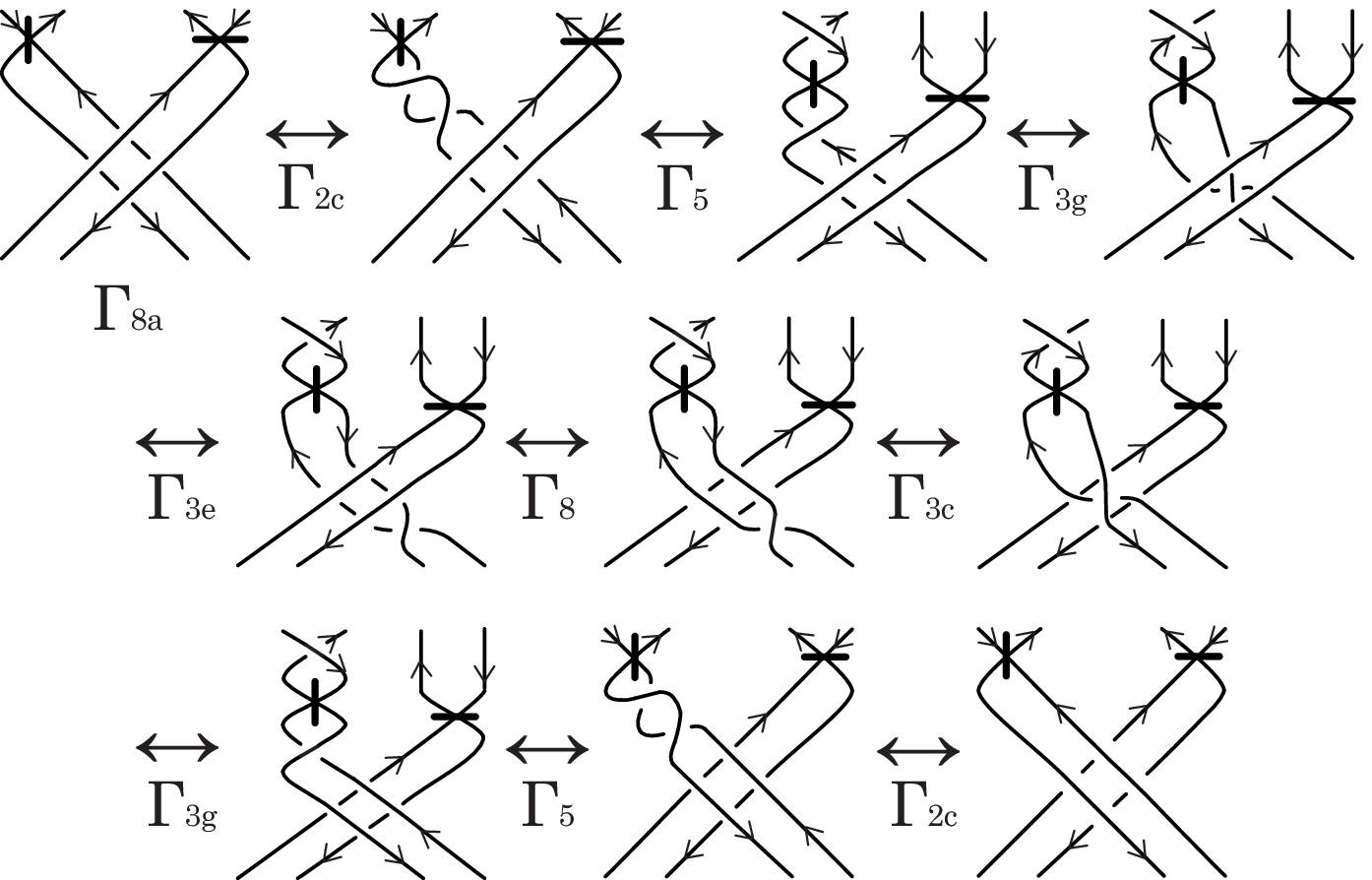} }
\caption{Oriented Yoshikawa move $\Gamma_{8a}$}
\label{fig-pf-oym8-1}
\end{center}
\end{figure}

\begin{lemma}\label{main-thm-3-3-lem9}
The move $\Gamma_{8a}$ is realized by a finite sequence of the moves
$\Gamma_{2c},$ $\Gamma_{3c},$ $\Gamma_{3g},$ $\Gamma_{3e},$ $\Gamma_{5},$ $\Gamma_{8}$ and plane isotopies.
The move $\Gamma_{8b}$ is realized by a finite sequence of the moves
$\Gamma_{2b},$ $\Gamma_{3c},$ $\Gamma_{3e},$ $\Gamma_{3g},$ $\Gamma'_{1},$ $\Gamma_{1b},$ $\Gamma_{2},$ $\Gamma_{2a},$ $\Gamma_{2b},$ $\Gamma_{3b},$ $\Gamma_{4},$ $\Gamma'_{4},$ $\Gamma_{5}$, $\Gamma_{8a}$ and plane isotopies.
The move $\Gamma_{8c}$ is realized by a finite sequence of the moves
$\Gamma_{2b},$ $\Gamma_{3c},$ $\Gamma_{3e},$ $\Gamma_{3g},$ $\Gamma'_{1},$ $\Gamma_{1b},$ $\Gamma_{2},$ $\Gamma_{2a},$ $\Gamma_{2b},$ $\Gamma_{3b},$ $\Gamma_{4},$ $\Gamma'_{4},$ $\Gamma_{5}$, $\Gamma_{8a}$ and plane isotopies.
\end{lemma}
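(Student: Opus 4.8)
The plan is to establish the three claims by the same method used for Lemmas~\ref{main-thm-3-3-lem7} and~\ref{main-thm-3-3-lem8}: exhibit an explicit sequence of marked graph diagrams realizing each move, and read off which elementary move is applied at each step.

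For $\Gamma_{8a}$ I would present the deformation in Fig.~\ref{fig-pf-oym8-1} carrying the source diagram of $\Gamma_{8a}$ to its target. In a type-$8$ picture there are three strands and one marked vertex; the marked vertex (with its two bands) is transported across a crossing by a $\Gamma_5$ move, the ambient crossings are rearranged by the oriented type-III Reidemeister variants $\Gamma_{3c}$, $\Gamma_{3e}$, $\Gamma_{3g}$ and by the oriented type-II variant $\Gamma_{2c}$, and at the critical step one applies the move $\Gamma_8$ of $\mathfrak{S}_1$ in the orientation for which it is available. Tracking orientations through the sequence shows that only the listed moves $\Gamma_{2c},\Gamma_{3c},\Gamma_{3g},\Gamma_{3e},\Gamma_5,\Gamma_8$ occur; since by Theorem~\ref{thm-polyak} each of $\Gamma_{2c},\Gamma_{3c},\Gamma_{3e},\Gamma_{3g}$ is in turn generated by $\Gamma_1,\Gamma'_1,\Gamma_2,\Gamma_3\subset\mathfrak{S}_1$, this records that $\Gamma_{8a}$ is generated by $\mathfrak{S}_1$.

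For $\Gamma_{8b}$ and $\Gamma_{8c}$ the idea is to reduce each of these remaining orientation variants of the type-$8$ move to $\Gamma_{8a}$. Since a strand cannot be reoriented, one instead routes an auxiliary arc around the local picture by $\Gamma_2$-type moves, thereby producing near the marked vertex a local copy of the source configuration of $\Gamma_{8a}$ with the correct orientations, applies $\Gamma_{8a}$, and then removes the auxiliary arc again by $\Gamma_2$-type moves, sliding crossings out of the way with $\Gamma_{3c},\Gamma_{3e},\Gamma_{3g},\Gamma_5$ and, in the set-up and clean-up, with a single $\Gamma_{5a}$ move. This exhibits $\Gamma_{8b}$ and $\Gamma_{8c}$ as composites of $\Gamma_{2b},\Gamma_{3c},\Gamma_{3e},\Gamma_{3g},\Gamma_2,\Gamma_5,\Gamma_{5a},\Gamma_{8a}$ and plane isotopies. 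It then remains to eliminate $\Gamma_{5a}$: by Lemma~\ref{main-thm-3-3-lem4} it is realized by $\Gamma'_1,\Gamma_{1b},\Gamma_{2a},\Gamma_{2b},\Gamma_{3b},\Gamma_4,\Gamma'_4,\Gamma_5$ and plane isotopies, and substituting this expansion yields precisely the lists in the statement.

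The hard part will be the book-keeping in the set-up and clean-up steps for $\Gamma_{8b}$ and $\Gamma_{8c}$. Each of the several crossings in a type-$8$ picture carries both an over/under datum and an orientation, and one must check that the auxiliary arc can be introduced and later deleted, and the crossings shuffled, without ever invoking a Reidemeister or Yoshikawa move outside the prescribed list; in particular one has to verify that every type-III move that arises is of a flavor ($\Gamma_{3b}$, $\Gamma_{3c}$, $\Gamma_{3e}$, or $\Gamma_{3g}$) already known to be generated, rather than one forcing a move absent from $\mathfrak{S}_1$. Once the diagram sequences are drawn correctly this becomes a routine move-by-move verification, exactly as in the figures accompanying Lemmas~\ref{main-thm-3-3-lem7} and~\ref{main-thm-3-3-lem8}.
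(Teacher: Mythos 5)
Your proposal follows essentially the same route as the paper: $\Gamma_{8a}$ is realized directly from $\Gamma_8$ together with $\Gamma_{2c},\Gamma_{3c},\Gamma_{3e},\Gamma_{3g},\Gamma_5$ via an explicit diagram sequence (the paper's Fig.~\ref{fig-pf-oym8-1}), and $\Gamma_{8b},\Gamma_{8c}$ are reduced to $\Gamma_{8a}$ through a sequence involving one $\Gamma_{5a}$, which is then expanded by Lemma~\ref{main-thm-3-3-lem4} to produce the stated lists (the paper's Fig.~\ref{fig-pf-oym8-3}). The remaining work you flag --- the move-by-move bookkeeping --- is exactly what the paper discharges by exhibiting the figures, so your plan is correct and matches the paper's proof.
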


\begin{proof}
It is immediate from Fig.~\ref{fig-pf-oym8-1} that the move $\Gamma_{8a}$ is realized by a finite sequence of the moves $\Gamma_{2c},$ $\Gamma_{3c},$ $\Gamma_{3g},$ $\Gamma_{3e},$ $\Gamma_{5},$ $\Gamma_{8}$ and plane isotopies. This gives the assertion for $\Gamma_{8a}$. Now it follows from Fig.~\ref{fig-pf-oym8-3} that the move $\Gamma_{8b}$ is realized by a finite sequence of the moves $\Gamma_{2b},$ $\Gamma_{3c},$ $\Gamma_{3e},$ $\Gamma_{3g},$ $\Gamma_{5a}$, $\Gamma_{8a}.$ By Lemma \ref{main-thm-3-3-lem4}, the move $\Gamma_{5a}$ can be replaced with the moves $\Gamma'_{1},$ $\Gamma_{1b},$ $\Gamma_{2},$ $\Gamma_{2a},$ $\Gamma_{2b},$ $\Gamma_{3b},$ $\Gamma_{4},$ $\Gamma'_{4},$ $\Gamma_{5}$. This yields the assertion for $\Gamma_{8b}$. Similarly, we obtain the assertion for the move $\Gamma_{8c}$ (see Fig.~\ref{fig-pf-oym8-3}). This completes the proof.
\end{proof}

\begin{figure}
\begin{center}
\resizebox{0.80\textwidth}{!}{%
  \includegraphics{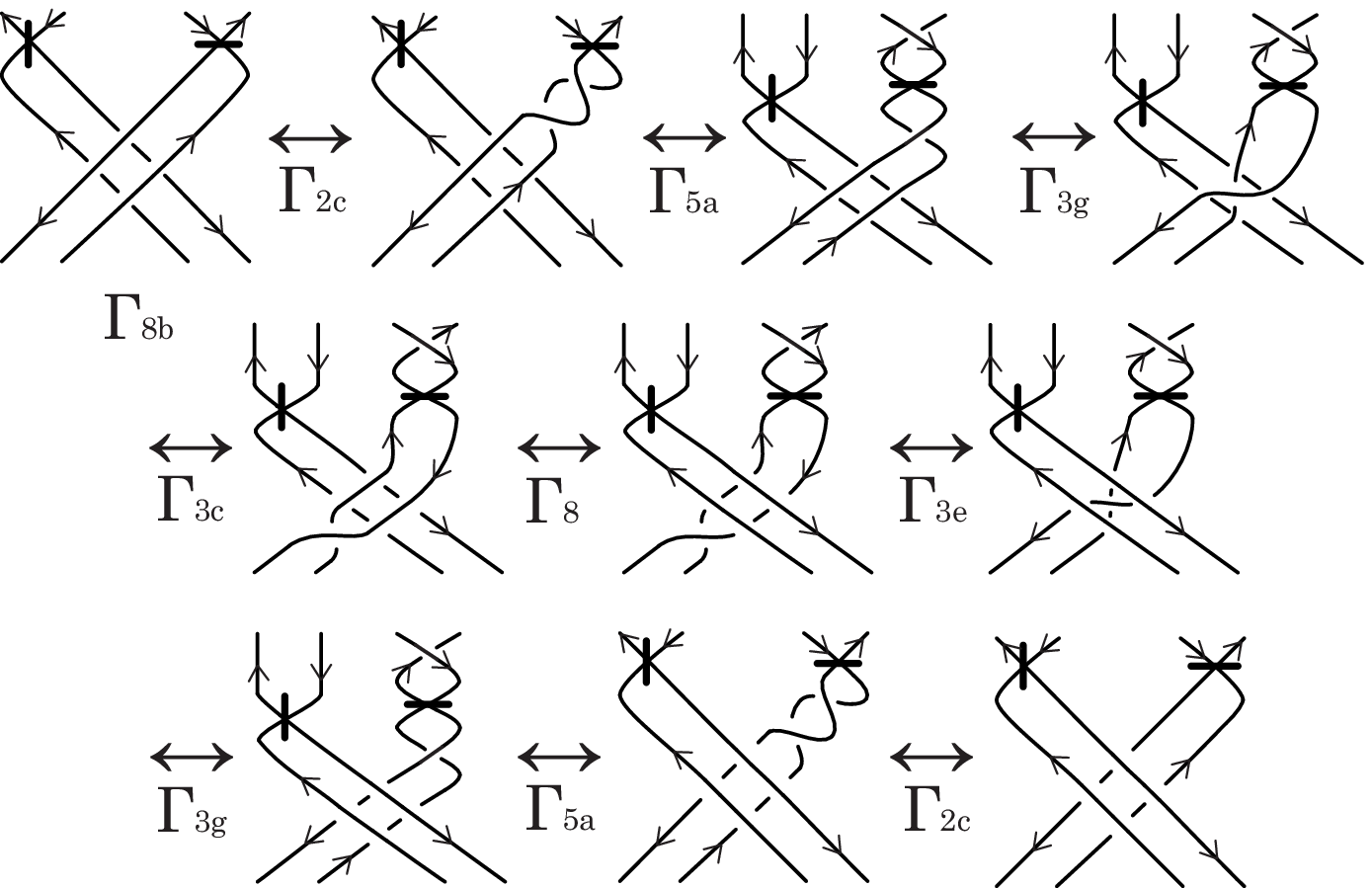} }
\caption{Oriented Yoshikawa move $\Gamma_{8b}$}
\label{fig-pf-oym8-2}
\end{center}
\begin{center}
\resizebox{0.80\textwidth}{!}{%
  \includegraphics{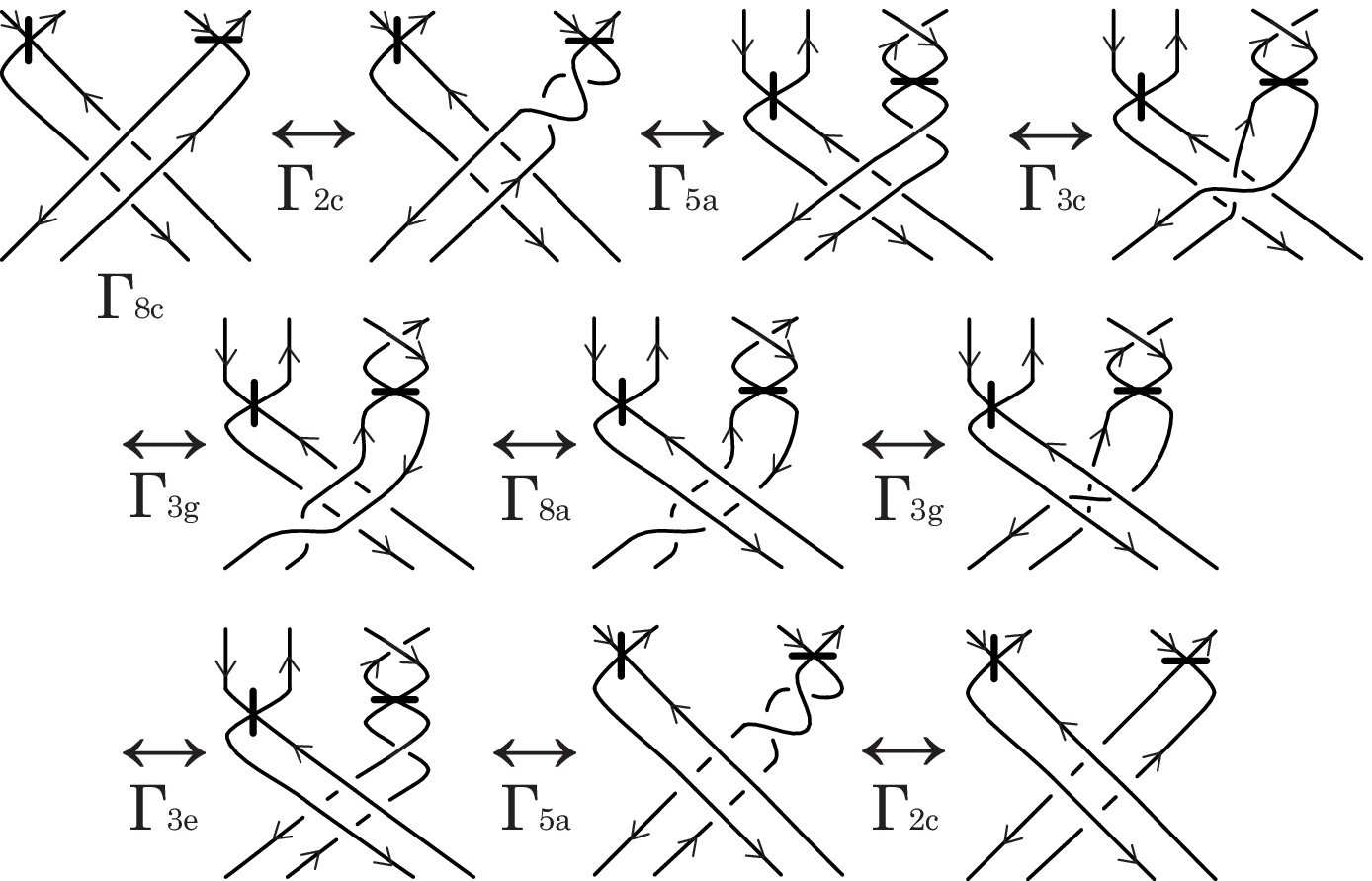} }
\caption{Oriented Yoshikawa move $\Gamma_{8b}$ and $\Gamma_{8c}$}
\label{fig-pf-oym8-3}
\end{center}
\end{figure}

Now let us complete the proof of Theorem \ref{main-thm-1-2} and Theorem \ref{main-thm-1-3}.

\bigskip

\indent{\bf Proof of Theorem \ref{main-thm-1-2}.} 
Let $M$ be a move of the oriented Yoshikawa moves. 
From Theorem \ref{main-thm-1-1}, we see that $M$ without orientation is generated by the only $10$ unoriented moves $\Omega_i, \Omega'_i \in \mathfrak S$. This gives that $M$ is generated by the 10 moves $\Omega_1, \Omega_2, \Omega_3, \Omega_4, \Omega'_4$, $\Omega_5$, $\Omega_6, \Omega'_6, \Omega_7, \Omega_8$ with orientations. In Fig.~\ref{fig-moves-type-II-o}, Fig.~\ref{fig-r123o} and Fig.~\ref{fig-om2}, these 10 moves with all possible orientations are found.
From Theorem \ref{thm-polyak}, we see that the classical Reidemeister moves $\Omega_1, \Omega_2, \Omega_3$ with orientation are generated by the moves $\Gamma_1, \Gamma'_1, \Gamma_2$ and $\Gamma_3$. For other oriented moves, it is seen from Lemmas \ref{main-thm-3-3-lem1}, \ref{main-thm-3-3-lem7}, \ref{main-thm-3-3-lem8}, \ref{main-thm-3-3-lem9} that the moves are realized by a finite sequence of oriented moves $\Gamma_i$ and $\Gamma'_i$ listed in $\mathfrak S_1$. This completes the proof of Theorem \ref{main-thm-1-2}.\hfill$\Box$

\bigskip

\indent{\bf Proof of Theorem \ref{main-thm-1-3}.} 
Let $M$ be a move of the oriented Yoshikawa moves. In the proof of Theorem \ref{main-thm-1-2}, we see that $M$ is generated by the 10 moves $\Omega_1, \Omega_2, \Omega_3,$ $\Omega_4,$ $\Omega'_4$, $\Omega_5$, $\Omega_6, \Omega'_6, \Omega_7, \Omega_8$ with orientations in Fig.~\ref{fig-moves-type-II-o}, Fig.~\ref{fig-r123o} and Fig.~\ref{fig-om2}. From Theorem \ref{thm-polyak-1}, we see that the five oriented Reidemeister moves $\Gamma_1, \Gamma'_1, \Gamma_{2b}, \Gamma_{2c}, \Gamma_{3a}$ generate all oriented Reidemeister moves. Thus the assertion follows from the previous theorem \ref{main-thm-1-2} at once. This completes the proof of Theorem \ref{main-thm-1-3}. \hfill$\Box$

\begin{remark}
There are various generating sets of classical oriented Reidemeister moves other than $\{\Gamma_1, \Gamma'_1, \Gamma_{2b}, \Gamma_{2c}, \Gamma_{3a}\}$ contianing the non-cyclic $\Omega_{3a}$ move $\Gamma_{3a}$ (see Theorem \ref{thm-polyak-1}). Replacing any generating set of oriented Reidemeister moves with the moves $\Gamma_1, \Gamma'_1, \Gamma_{2b}, \Gamma_{2c}, \Gamma_{3a}$ yields a generating set of oriented Yoshikawa moves. This fact and Lemmas \ref{main-thm-3-3-lem1}-\ref{main-thm-3-3-lem9} enable us to show that we can choose a generating set of oriented Yoshikawa moves other than ones given in Theorem \ref{main-thm-1-2} and Theorem \ref{main-thm-1-3}. For example, it is easily seen from Theorem \ref{thm-polyak-1} that $\{\Gamma_{1}, \Gamma_{1a}, \Gamma_{2b},  \Gamma_{2c}, \Gamma_{3a}, \Gamma_4, \Gamma'_4, \Gamma_5$, $\Gamma_6, \Gamma'_6, \Gamma_7, \Gamma_8\}$, $\{\Gamma_{1b}, \Gamma'_{1}, \Gamma_{2b},  \Gamma_{2c}, \Gamma_{3a}, \Gamma_4, \Gamma'_4, \Gamma_5$, $\Gamma_6, \Gamma'_6, \Gamma_7, \Gamma_8\}$ and $\{\Gamma_{1a}, \Gamma_{1b}, \Gamma_{2b}, \Gamma_{2c},$\\ $ \Gamma_{3a}, \Gamma_4, \Gamma'_4, \Gamma_5$, $\Gamma_6, \Gamma'_6, \Gamma_7, \Gamma_8\}$ are all generating sets of oriented Yoshikawa moves.
\end{remark}


\section{Independence of Yoshikawa moves}\label{sect-ind-ym}

For classical links in $\mathbb R^3$, it is well known that each Reidemeister move cannot be generated by the other two type moves, that is, a Reidemeister move is independent from the other types (cf. \cite[Appendix A]{Ma}). 

For Roseman moves on broken surface diagrams of surface-links in $\mathbb R^4,$ there also exist analogue results. Indeed, let $\mathcal L$ be a surface-link in $\mathbb R^4$ and let $\pi : \mathbb R^4 \to \mathbb R^3$ be the projection defined by $\pi(x_1,x_2,x_3,x_4) = (x_1,x_2,x_3).$ By a slight perturbation of $\mathcal L$ if necessary, we may assume that the restriction $\pi|_\mathcal L : \mathcal L \to \mathbb R^3$ is a generic map, i.e., the sigularities of $\pi|_\mathcal L$ consist of double point curves and isolated triple or branch points. The image $\pi(\mathcal L)$ in which the lower sheets are consistently removed along the double point curves is called a {\it broken surface diagram} of $\mathcal L$. For more details, see \cite{CS,Ka2}.
In \cite{Ro}, D. Roseman proved that two surface-links are equivalent if and only if their broken surface diagrams are transformed into each other by applying a finite number of seven types of local moves on broken surface diagrams which are called the {\it Roseman moves}. In \cite{Ya}, T. Yashiro proved that Roseman move of type 2 can be generated by the moves of type 1 and type 4. Recently, K. Kawamura showed that Roseman move of type 1 can be generated by the moves of type 2 and type 3, and for any $i (i=3,4,5,6,7)$,  Roseman move of type i cannot be generated by the other six types, that is, Roseman move of type $i$ is independent from the other six type moves \cite{Kar}.

For Yoshikawa moves on marked graph diagrams, it is natural to ask whether each Yoshikawa move is independent from the other moves.
In Theorem \ref{main-thm-1-1}, we have proved that $\mathfrak S=\{\Omega_1, \ldots, \Omega_8, \Omega'_4, \Omega'_6\}$ is a generating set of all unoriented Yoshikawa moves. From now on we shall discuss the independence of each Yoshikawa move in $\mathfrak S$ from the other moves in $\mathfrak S$. 

\begin{theorem}\label{main-thm-6-1}
The Yoshikawa move $\Omega_1$ is independent from the other moves in $\mathfrak S$.
\end{theorem}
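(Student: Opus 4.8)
The plan is to construct an invariant of marked graph diagrams that is preserved under every move in $\mathfrak S$ except $\Omega_1$, and on which $\Omega_1$ acts nontrivially. The natural candidate is a writhe-type count. Concretely, I would assign to each classical crossing of a marked graph diagram $D$ a sign $\pm 1$ (using either the local orientation in the oriented setting, or — since we are in the unoriented category — a mod-$2$ reduction, simply counting crossings modulo $2$) and I would \emph{ignore} the marked $4$-valent vertices entirely. Define $w(D) \in \mathbb Z/2$ to be the number of classical crossings of $D$ modulo $2$. The claim is that $w$ is invariant under $\Omega_2, \Omega_3, \Omega_4, \Omega'_4, \Omega_5, \Omega_6, \Omega'_6, \Omega_7, \Omega_8$ and under all plane isotopies, but changes by $1$ under $\Omega_1$.

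First I would check the invariance move by move, reading off the pictures in Fig.~\ref{fig-moves-type-II}. The move $\Omega_2$ changes the number of classical crossings by $\pm 2$, hence preserves the mod-$2$ count; $\Omega_3$ preserves the number of classical crossings exactly. For the Type I marked moves $\Omega_4, \Omega'_4, \Omega_5$: inspecting the diagrams, each side has the same number of classical crossings — $\Omega_4$ and $\Omega'_4$ slide a strand past a marked vertex and trade nothing in the classical crossing count, while $\Omega_5$ likewise matches crossings on the two sides (a marked vertex is created/destroyed, but the classical crossings are unchanged, or changed by an even number). For the Type II marked moves $\Omega_6, \Omega'_6, \Omega_7, \Omega_8$: here marked vertices are created or annihilated, but again one reads from the figures that the classical crossing count on the two sides differs by an even number (indeed $0$ for $\Omega_6,\Omega'_6$ and $0$ for the saddle exchanges $\Omega_7,\Omega_8$, once one discounts the marked vertices). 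Finally, under $\Omega_1$ the number of classical crossings changes by exactly $1$, so $w$ changes. This shows $\Omega_1$ cannot be obtained from the other nine moves of $\mathfrak S$ together with plane isotopies: any finite sequence of those moves preserves $w$, whereas $\Omega_1$ does not.

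The step I expect to be the main obstacle is verifying, carefully and uniformly, that the crossing count really is unchanged mod $2$ across \emph{all} the marked moves and all their mirror variants — in particular the more complicated moves $\Omega_7$ and $\Omega_8$, where several strands and two marked vertices are involved, so one must be sure no odd discrepancy in the classical crossings is hiding. If a naive crossing count fails to be invariant under one of these (for instance if some marked move genuinely changes the parity), the fix is to refine the invariant: count only those crossings \emph{not} incident to any arc that terminates at a marked vertex, or pass to a more robust quantity such as the number of connected components of the underlying $4$-valent graph together with a parity correction, and re-run the same case analysis. But the essential mechanism is unchanged: exhibit a quantity that all moves but $\Omega_1$ leave invariant. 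I would also remark that the same argument, applied verbatim in the oriented setting with the honest writhe in place of the mod-$2$ count, shows $\Gamma_1$ (and $\Gamma'_1$) are independent of the remaining oriented moves, which is the content one would want to record alongside this theorem.
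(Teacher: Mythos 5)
Your argument is exactly the paper's: the parity of the number of classical crossings is preserved by every move in $\mathfrak S$ except $\Omega_1$, which changes the crossing number by one, so $\Omega_1$ cannot be generated by the others. The proposal is correct and takes essentially the same approach as the paper's proof.
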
 

\begin{proof}
It is clear that all moves in $\mathfrak S$ except $\Omega_1$ keep the parity of the number of crossings and the move $\Omega_1$ changes the number of crossing by one and so does the parity. This gives that $\Omega_1$ cannot be realized by the moves in $\mathfrak S-\{\Omega_1\}$.
\end{proof}

\begin{theorem}\label{main-thm-6-2}
The Yoshikawa move $\Omega_2$ is independent from the other moves in $\mathfrak S$.
\end{theorem}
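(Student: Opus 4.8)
The plan is to find an invariant of marked graph diagrams — or more precisely a quantity attached to a diagram — that is unchanged by every move in $\mathfrak S - \{\Omega_2\}$ but is changed by $\Omega_2$. For the $\Omega_1$-independence argument the relevant bookkeeping was the parity of the crossing number; for $\Omega_2$ a natural candidate is a $\mathbb Z/2$-valued quantity that counts crossings with a weighting that makes $\Omega_3$ (and all the marked-vertex moves $\Omega_4,\Omega'_4,\Omega_5,\Omega_6,\Omega'_6,\Omega_7,\Omega_8$) neutral while detecting $\Omega_2$. First I would recall that an $\Omega_2$ move changes the crossing number by exactly two, so the crossing-number parity is useless here; instead one should look at an invariant sensitive to the number of $\Omega_2$-type bigons, such as a suitable reduction of the writhe or, since we are in the unoriented setting, a count modulo $2$ of some feature of the double points. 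The cleanest route is to work with the two resolutions: recall that for an admissible diagram $D$ both $L_-(D)$ and $L_+(D)$ are trivial link diagrams, and track how the crossing numbers $c(L_-(D))$, $c(L_+(D))$, and $c(D)$ itself behave under the moves.

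Concretely, I would examine each of the nine moves in $\mathfrak S - \{\Omega_2\}$ and compute the change $\Delta c(D) \bmod 2$ that it induces on the number of classical crossings of the marked graph diagram. The moves $\Omega_1, \Omega_3$ change $c(D)$ by $\pm 1$ and $0$ respectively; $\Omega_1$ we have already excluded from consideration, but it does change parity, so $c(D)\bmod 2$ alone will not separate $\Omega_2$ from $\Omega_1$. This forces a refinement: I would instead use the quantity $c(D) - c(L_-(D)) - c(L_+(D))$, or count crossings that are "genuinely" between distinct strands versus self-crossings, or — perhaps simplest — pass to a weighted count $n(D)$ that assigns weight $1$ to every crossing not adjacent to a marked vertex in a specified local picture and check the tables. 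The key observation I expect to exploit is that $\Omega_2$ is the only move in $\mathfrak S$ that creates or destroys a pair of crossings both of which lie on a single bigon face disjoint from all marked vertices, whereas $\Omega_3$ merely slides a strand (net crossing change $0$), $\Omega_1$ changes the count by an odd number, and each Type I/II marked-vertex move ($\Omega_4,\Omega'_4,\Omega_5,\Omega_6,\Omega'_6,\Omega_7,\Omega_8$) changes the crossing number by an even amount but always in a way that is "localized at a vertex" — one then arranges the weighting so these contribute $0$.

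The main obstacle will be pinning down an invariant that is simultaneously (i) genuinely invariant — not merely parity-invariant — under $\Omega_3$ and all the Type I and Type II moves involving marked vertices, since $\Omega_5$, $\Omega_7$, and $\Omega_8$ each change the crossing number by a nonzero even amount and one must verify the weighting cancels these contributions, and (ii) still nontrivially sensitive to $\Omega_2$. I would handle this by reading off the exact before/after local pictures from Fig.~\ref{fig-moves-type-II} and Fig.~\ref{fig-r45678n} and tabulating $\Delta c$ for each; if a single global $\mathbb Z/2$ count does not separate the moves, the fallback is to use the pair of resolutions and argue via the fact that $\Omega_2$ is the only move whose effect on $D$ does not descend to a Reidemeister-type change on \emph{both} $L_-(D)$ and $L_+(D)$ simultaneously of the expected kind. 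Either way, once the table is written the conclusion is immediate: if $\Omega_2$ were generated by a finite sequence of moves from $\mathfrak S - \{\Omega_2\}$ and plane isotopies, the invariant would be preserved along that sequence, contradicting that it changes across a single $\Omega_2$ move. This completes the plan; the routine part is the case-by-case computation of $\Delta c$, which I would not carry out here.
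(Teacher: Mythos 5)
Your overall logical skeleton is the right one (exhibit a quantity preserved by every move in $\mathfrak S-\{\Omega_2\}$ but changed by a single $\Omega_2$), but there is a genuine gap: you never produce such a quantity, and the candidates you actually name cannot work. Any $\mathbb Z/2$-valued crossing count is doomed from the start, because an $\Omega_2$ move changes the number of crossings by exactly $2$; with any weighting in which the two new crossings contribute equal weights, the mod-$2$ count is blind to $\Omega_2$, so condition (ii) of your own plan fails before one even begins the case-by-case check of the other moves. Your fallback via the resolutions rests on a false premise as well: $\Omega_2$ involves no marked vertices, so it descends to an ordinary Reidemeister move of type II on both $L_-(D)$ and $L_+(D)$ simultaneously --- it is in no way singled out by that criterion.

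The missing idea in the paper's proof is to abandon counting altogether and use a Boolean invariant of a two-component diagram $D=D_1\cup D_2$: let $|D_i|$ denote the underlying projection of $D_i$ (markers removed, crossings regarded as $4$-valent vertices) and set $s(D)=0$ if $|D_1|\cap|D_2|=\emptyset$ and $s(D)=1$ otherwise. Every move in $\mathfrak S-\{\Omega_2\}$ preserves $s$: the crossings created or destroyed by $\Omega_1$, $\Omega_6$, $\Omega'_6$ are self-crossings of a single strand, and each of $\Omega_3$, $\Omega_4$, $\Omega'_4$, $\Omega_5$, $\Omega_7$, $\Omega_8$ has the same pairs of local strands crossing before and after the move, so none of these can make two disjoint projections intersect or conversely. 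On the other hand, applying a single $\Omega_2$ to the disjoint two-circle diagram of the trivial $S^2$-link with two components changes $s$ from $0$ to $1$. If $\Omega_2$ were generated by the remaining moves and plane isotopies, $s$ would be preserved across it; contradiction. You need to supply this (or some other genuinely working) invariant explicitly; as written, the ``routine case-by-case computation of $\Delta c$'' you defer cannot succeed for any of the invariants you propose.
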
 

\begin{proof}
Let $D=D_1 \cup D_2$ be a two component marked graph diagram in $\mathbb R^2$. By $|D|=|D_1| \cup |D_2|$ we denote the $4$-valent graph in $\mathbb R^2$ obtained from $D$ by removing markers and by assuming crossings to be verticess of valency $4$. Define
$$s(D)=
\begin{cases}
0 & \text{if $|D_1| \cap |D_2| =\emptyset$},\\ 
1 & \text{otherwise}.
\end{cases}
$$
Then it is easily seen that $s(D)$ is invariant under all moves in $\mathfrak S$ except $\Omega_2$. Now let $D=\bigcirc~\bigcirc$ and 
$D'=\xy (0,0) *\xycircle(4,4){-}, 
(1,2.5);(1,-2.5) **\crv{(-1,0)}, 
(4,3.7);(6,3.5) **\crv{(4.5,3.9)}, 
(4,-3.7);(6,-3.5) **\crv{(4.5,-3.9)}, 
(6,3.5);(6,-3.5) **\crv{(10,0)},
\endxy$. Then $D$ and $D'$ are two diagrams of the trivial $S^2$-link with two components, and $s(D)=0$ and $s(D')=1$. This gives that $s(D)$ is not an invariant of the move $\Omega_2$. Hence $\Omega_2$ cannot be realized by the moves in $\mathfrak S-\{\Omega_2\}$. 
\end{proof}

Let $D=D_1 \cup D_2 \cup D_3$ be a marked graph diagram in $\mathbb R^2$ with three components. Let $T(D)=\{p(D_1), p(D_2), p(D_3)\}$ be the non-ordered triple of numbers with $p(D_i) \in \{0,1\}$ defined as follows. For each $i=1,2,3$, the component $D_i$ tiles $\mathbb R^2$ into regions which admit a checkboard coloring. We color the unbounded region to be white. Let $B(D_i)$ denote the set of all black regions and let $n(D_i)$ be the number of crossings of two components different from $D_i$ and lie in $B(D_i)$. We define $p(D_i)=n(D_i) \pmod 2$. Then we have the following:

\begin{lemma}\label{main-lem-6-3}
 The non-ordered triple $T(D)=\{p(D_1), p(D_2), p(D_3)\}$ is invariant under all moves in $\mathfrak S$ except the move $\Omega_3$. 
\end{lemma}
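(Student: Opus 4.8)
The plan is to verify the invariance one Yoshikawa move at a time, exploiting that each move (and each of its mirrors) is supported in a disk $\Delta\subset\mathbb R^2$, with $D$ and $D'$ agreeing outside $\Delta$. Fix $i\in\{1,2,3\}$ and put $\{j,k\}=\{1,2,3\}\setminus\{i\}$; the goal is $p(D_i)=p(D_i')$. First I would record a general fact about the checkerboard colouring: since every vertex of the $4$-valent diagram $D_i$ (crossing or marked vertex) has even valence, $D_i$ is a mod-$2$ cycle in the plane, so the colour of a region equals the parity of the number of transverse intersections of $D_i$ with an arbitrary generic path from that region to infinity. For a $D_j$--$D_k$ crossing $x$ lying outside $\Delta$ one may take such a path to avoid $\Delta$ entirely, and since $D_i$ is unchanged outside $\Delta$ this shows that the colour of the region containing $x$ is the same for $D$ and $D'$. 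Thus the part of $n(D_i)$ coming from crossings outside $\Delta$ is automatically preserved, and only the inside of $\Delta$ needs attention.

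The heart of the argument is the observation that \emph{the four edges at a marked vertex all belong to one component}, since the vertex joins them. Reading each local picture with this in mind, one checks that for every Yoshikawa move \emph{except} $\Omega_3$, the arcs meeting $\Delta$ span at most two of the three components: $\Omega_1$ shows one strand; $\Omega_2$ two strands; $\Omega_6,\Omega_6'$ a single capped vertex; $\Omega_4,\Omega_4'$ a vertex together with one extra arc; $\Omega_5$ a vertex whose two edge-pairs run through the displayed crossing; and $\Omega_7,\Omega_8$ two marked vertices that, traced through their edges together with the displayed strands, involve at most two components. (Only $\Omega_3$ genuinely exhibits three strands of three possibly distinct components, which is exactly why it must be excluded: sliding a strand of $D_i$ across a $D_j$--$D_k$ crossing changes $p(D_i)$ by one.) It follows that whenever $D_i$ actually meets $\Delta$, no $D_j$--$D_k$ crossing lies inside $\Delta$, so the inside contribution to $n(D_i)$ is zero before and after; and since a local move fixes $D_i\cap\partial\Delta$ together with the cyclic arrangement of the adjacent regions, the colouring of $D_i$ is unchanged outside $\Delta$ as well. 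Hence $p(D_i)=p(D_i')$ in this case.

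It remains to treat the case in which $D_i$ is disjoint from $\Delta$. Then $D_i$ is literally unchanged and $\Delta$ sits inside a single region $R$ of $D_i$, so any $D_j$--$D_k$ crossings created or destroyed by the move all lie in $R$; consequently $n(D_i)$ changes by $0$ if $R$ is white and by the net number of such crossings created if $R$ is black. So it suffices to note that each move creates or destroys an even number of $D_j$--$D_k$ crossings: $\Omega_1,\Omega_6,\Omega_6'$ create or destroy no $D_j$--$D_k$ crossings at all, $\Omega_2$ changes their number by $0$ or $\pm2$, and $\Omega_4,\Omega_4',\Omega_5,\Omega_7,\Omega_8$ are slide-type moves for which one reads off from the figures that the arc pushed past a marked vertex meets the same edges, hence the same components, on both sides, so no pairwise crossing count changes. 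Running the three cases through the nine moves of $\mathfrak S\setminus\{\Omega_3\}$ (and their mirrors) completes the proof.

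I expect the only real difficulty to be the combinatorics of $\Omega_7$ and $\Omega_8$: there one must carefully trace the edges through the two marked vertices to confirm both that at most two components appear inside the support disk and that the move preserves every pairwise crossing count. Once that is pinned down, the colouring fact from the first step and the region bookkeeping of the third step are entirely routine, and the remaining moves $\Omega_1,\Omega_2,\Omega_4,\Omega_4',\Omega_5,\Omega_6,\Omega_6'$ are immediate from the same two principles.
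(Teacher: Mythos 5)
Your proof is correct and follows essentially the same route as the paper's: the paper's (much terser) argument likewise rests on the observation that every move in $\mathfrak S-\{\Omega_3\}$ involves at most two components, so the move takes place inside a single region of the third component's checkerboard colouring and alters the relevant crossing count by an even number. Your write-up merely supplies the details the paper leaves implicit, namely the path-to-infinity argument showing colours of regions outside the support disk are preserved, and the explicit split into the cases where $D_i$ does or does not meet that disk.
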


\begin{proof}
For all moves in $\mathfrak S$ except the move $\Omega_3$, it is easily seen that the moves involves only one or two components. This straightforwardly gives that the moves contributing the value $T(D)$ are performed in a region with the same color. Since the all moves contains even number of crossings, the parity of $n(D_i)$ does not changed for each $i=1,2,3$. This completes the proof.
\end{proof}

\begin{theorem}\label{main-thm-6-3}
The Yoshikawa moves $\Omega_3$ is independent from the other moves in $\mathfrak S$.
\end{theorem}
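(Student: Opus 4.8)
The plan is to repeat the strategy of Theorems~\ref{main-thm-6-1} and~\ref{main-thm-6-2}, now using the quantity furnished by Lemma~\ref{main-lem-6-3} as the obstruction. By that lemma the non-ordered triple $T(D)=\{p(D_1),p(D_2),p(D_3)\}$ is preserved by every move of $\mathfrak S$ other than $\Omega_3$ and by plane isotopies; hence it suffices to exhibit two marked graph diagrams $D$ and $D'$ that are related by a single $\Omega_3$ move and satisfy $T(D)\neq T(D')$. Indeed, were $\Omega_3$ realizable by a finite sequence of plane isotopies and moves from $\mathfrak S\setminus\{\Omega_3\}$, then in particular $D$ could be transformed into $D'$ by such a sequence, and Lemma~\ref{main-lem-6-3} would force $T(D)=T(D')$, a contradiction. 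Therefore $\Omega_3$ cannot be realized by the moves in $\mathfrak S\setminus\{\Omega_3\}$.

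To construct the pair, I would start from three disjoint round circles $D_1,D_2,D_3$, a diagram of the trivial $S^2$-link with three components, for which $T=\{0,0,0\}$. Using only $\Omega_2$ moves and plane isotopies I push one arc of each $D_i$ together into a small triangular region, bounded by one crossing from each of the three pairs and arranged exactly as on the left-hand side of an $\Omega_3$ move involving a single strand of each component. Call the resulting diagram $D$; since $\Omega_2\in\mathfrak S\setminus\{\Omega_3\}$, Lemma~\ref{main-lem-6-3} gives $T(D)=\{0,0,0\}$, and $D$ is again an admissible diagram of the trivial $S^2$-link with three components. Let $D'$ be the diagram obtained from $D$ by performing that single $\Omega_3$ move; $D$ and $D'$ then present the same (trivial) surface-link.

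Finally I would verify $T(D)\neq T(D')$ by direct inspection. The $\Omega_3$ move sweeps the local strand of $D_3$ across the crossing between $D_1$ and $D_2$, so that crossing passes from a region of one checkerboard colour of $D_3$ into the adjacent region of the other colour; hence $n(D_3)$ changes by $\pm1$ and $p(D_3)$ changes parity, so $T(D')\neq\{0,0,0\}=T(D)$, which already finishes the argument. (Tracking all three local crossings through the triple-point instant of the move shows that in fact each of $p(D_1),p(D_2),p(D_3)$ flips, so $T(D')=\{1,1,1\}$; and since no multiset $\{a,b,c\}\subseteq\{0,1\}$ equals $\{1-a,1-b,1-c\}$, \emph{any} $\Omega_3$ move whose three strands lie on three distinct components changes $T$.) The only point that needs care is this colour-counting step, together with making sure the chosen instance of $\Omega_3$ genuinely involves three distinct components — an $\Omega_3$ move supported on one or two components leaves $T$ unchanged; everything else in the proof is formal.
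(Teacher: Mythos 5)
Your proof is correct and follows essentially the same strategy as the paper: both use Lemma~\ref{main-lem-6-3} as the obstruction and exhibit a pair of three-component diagrams of the trivial $S^2$-link whose triples $T$ differ, the paper via an explicit figure realizing $T=\{1,1,1\}$ versus $\{0,0,0\}$, and you via a directly constructed instance of $\Omega_3$ acting on three distinct components. Your local verification that such an $\Omega_3$ flips each $p(D_i)$ (because the crossing of $D_j\cap D_k$ passes to the adjacent, oppositely coloured region of $|D_i|$) is a welcome detail that the paper leaves to inspection of its figure.
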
  

\begin{proof}
Let $D=D_1 \cup D_2 \cup D_3$ and $D'=D'_1 \cup D'_2 \cup D'_3$ be the diagrams of the trivial $S^2$-link with three components as shown in Fig.~\ref{ind-ym3}.
\begin{figure}
\begin{center}
\resizebox{0.6\textwidth}{!}{%
  \includegraphics{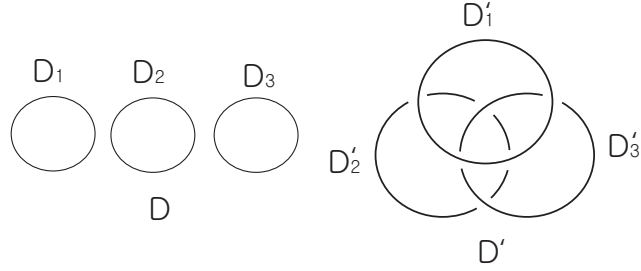}}
\caption{Diagrams of the trivial $S^2$-link with three components}\label{ind-ym3}
\end{center}
\end{figure}
Then it is easily seen that $p(D_1)=p(D_2)=p(D_3)=1$ and so $T(D)=\{1\}$. But $p(D'_1)=p(D'_2)=p(D'_3)=0$ and so $T(D')=\{0\}$. By Lemma \ref{main-lem-6-3}, $D$ cannot be transformed into $D'$ by using a sequence of the moves in $\mathfrak S-\{\Omega_3\}$. This concludes that the move $\Omega_3$ is independent from the moves in $\mathfrak S-\{\Omega_3\}$.
\end{proof}

\begin{theorem}\cite[Corollary 6.3]{JKaL}\label{main-thm-6-4}
The Yoshikawa move $\Omega_4$ and $\Omega'_4$ are independent from the moves in $\mathfrak S-\{\Omega_4, \Omega'_4\}$. 
\end{theorem}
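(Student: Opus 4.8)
The plan is to exhibit an invariant $\mathcal I$ of marked graph diagrams --- an invariant of the \emph{diagram}, not merely of the surface-link it presents --- which is unchanged under every move of $\mathfrak S\setminus\{\Omega_4,\Omega'_4\}$ and under plane isotopy, but which separates some pair of diagrams related by a single $\Omega_4$ move, and likewise some pair related by a single $\Omega'_4$ move. Given such an $\mathcal I$, the theorem follows formally: were $\Omega_4$ realizable by a finite sequence of moves from $\mathfrak S\setminus\{\Omega_4,\Omega'_4\}$ and plane isotopies, the chosen $\Omega_4$-pair $D,D'$ would be joined by such a sequence, whence $\mathcal I(D)=\mathcal I(D')$, contradicting the choice of the pair; the same reasoning disposes of $\Omega'_4$. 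Note that no purely numerical invariant of the kind used above for $\Omega_1,\Omega_2,\Omega_3$ appears to suffice here, which is why a finer tool must be imported.

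Such an $\mathcal I$ is available from the construction of \cite{JKaL}. The relevant object there is the state-sum/ideal-coset invariant of marked graph diagrams: crossings are expanded by a Kauffman-type skein rule, each marked vertex is expanded by a prescribed resolution rule, the resulting formal sum over trivial states is read off as an element of a polynomial ring, and one normalizes by the writhe and passes to the quotient by a fixed ideal. The point is that, \emph{before} the final quotient is imposed, the resulting element $\langle D\rangle$ is already invariant under $\Omega_1,\Omega_2,\Omega_3$ (the classical Kauffman-bracket behaviour together with the writhe normalization), under $\Omega_5$ (using the compatibility of the marked-vertex rule with a crossing passing by it), and under the Type II moves $\Omega_6,\Omega'_6,\Omega_7,\Omega_8$ (each verified by expanding both sides and matching states, using the Type I invariances already established) --- but it is \emph{not} invariant under $\Omega_4$ or $\Omega'_4$; indeed it is precisely to repair $\Omega_4$- and $\Omega'_4$-invariance that one descends to the ideal coset. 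Thus $\langle\cdot\rangle$ is an invariant of diagrams with exactly the property required in the first paragraph, and it remains only to record one explicit pair of admissible marked graph diagrams of a common surface-link related by an $\Omega_4$ move with $\langle D\rangle\neq\langle D'\rangle$, and its mirror counterpart for $\Omega'_4$.

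I expect the genuine work --- already carried out in \cite{JKaL} --- to be the verification of invariance of $\langle\cdot\rangle$ under the moves of $\mathfrak S\setminus\{\Omega_4,\Omega'_4\}$ that actually involve marked vertices, i.e.\ $\Omega_5$ and $\Omega_6,\Omega'_6,\Omega_7,\Omega_8$. Unlike the purely classical $\Omega_1,\Omega_2,\Omega_3$, these force the marked-vertex resolution rule to behave coherently both against a crossing and against a second nearby marked vertex (as in $\Omega_7$), and the rule (and the ideal) must be calibrated so that all of these hold \emph{while leaving the $\Omega_4$ discrepancy intact}. Granting that calibration, the statement is immediate --- it is \cite[Corollary 6.3]{JKaL}. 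If one preferred a self-contained argument, one could instead try to build $\mathcal I$ as a count of colourings of the arcs of $D$ by an algebraic structure subject to a relation imposed at marked vertices, retaining only those coherence axioms not forced by $\Omega_4$ and $\Omega'_4$; this is more elementary in spirit but requires a careful audit of which axiom each Yoshikawa move imposes, so the route through \cite{JKaL} is the cleaner one.
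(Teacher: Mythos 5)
The paper offers no proof of this statement at all---it is imported verbatim as \cite[Corollary 6.3]{JKaL}---and your proposal likewise defers the substantive verification to that reference while correctly laying out the standard ``diagram invariant that detects the move'' strategy used for every other independence result in Section 6, so it is essentially the same approach. You also correctly read the statement as only excluding generation by $\mathfrak S\setminus\{\Omega_4,\Omega'_4\}$ (rather than the stronger independence of $\Omega_4$ from $\mathfrak S\setminus\{\Omega_4\}$), which is consistent with Questions 1 and 2 of the paper remaining open.
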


\begin{theorem}\label{main-thm-6-6}
The Yoshikawa move $\Omega_6$ is independent from the other moves in $\mathfrak S$. The Yoshikawa move $\Omega'_6$ is independent from the other moves in $\mathfrak S$.
\end{theorem}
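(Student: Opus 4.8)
The plan is to follow the strategy of Theorems~\ref{main-thm-6-1}--\ref{main-thm-6-3}: produce a quantity that is invariant under every move of $\mathfrak S$ except $\Omega_6$ (respectively $\Omega'_6$) and then exhibit two admissible marked graph diagrams of a single surface-link related by that one move on which the quantity takes different values. For an admissible marked graph diagram $D$, let $w_+(D)$ and $w_-(D)$ denote the numbers of components of the trivial link diagrams $L_+(D)$ and $L_-(D)$, and let $n(D)$ be the number of marked vertices of $D$. Since the hyperbolic splitting underlying $\mathcal S(D)$ has $w_-(D)$ minima, $n(D)$ saddles and $w_+(D)$ maxima, we have $\chi(\mathcal S(D))=w_+(D)+w_-(D)-n(D)$, so the combination $w_++w_--n$ is preserved by every Yoshikawa move; this makes $w_+$ and $w_-$ the natural quantities to look at.

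First I would track $w_+$ and $w_-$ through the ten moves of $\mathfrak S$. Recall that a marked vertex is smoothed by $L_+$ parallel to its marker and by $L_-$ perpendicular to it. The moves $\Omega_1,\Omega_2,\Omega_3$ involve no marked vertex, hence act on each of $L_+(D)$ and $L_-(D)$ as the corresponding classical Reidemeister move and change neither count. The moves $\Omega_4,\Omega'_4,\Omega_5$ involve one marked vertex and $\Omega_7,\Omega_8$ involve two; for each of them one resolves the marked vertices in the two local pictures and checks that the change induced on $L_+(D)$ and on $L_-(D)$ is a planar isotopy or a finite sequence of classical Reidemeister moves, so again $w_+$ and $w_-$ are unchanged. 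Finally, reading off Fig.~\ref{fig-moves-type-II} directly: for $\Omega_6$ the negative resolution of either side is a single through-arc, while the positive resolution of the more complicated side is that arc together with one extra closed loop — the cap glued onto the smoothing parallel to the vertical marker — so $\Omega_6$ fixes $w_-$ and changes $w_+$ by exactly $\pm1$; and because the $90^{\circ}$ rotation of the marker distinguishing $\Omega'_6$ from $\Omega_6$ interchanges the two resolutions at that vertex, $\Omega'_6$ fixes $w_+$ and changes $w_-$ by exactly $\pm1$.

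It follows that $w_+$ is invariant under every move in $\mathfrak S\setminus\{\Omega_6\}$ (this set contains $\Omega'_6$, which fixes $w_+$) and that $w_-$ is invariant under every move in $\mathfrak S\setminus\{\Omega'_6\}$ (this set contains $\Omega_6$, which fixes $w_-$). To finish, let $D_0$ be the standard diagram of the trivial $2$-sphere, namely a simple closed curve with no marked vertices, so that $w_+(D_0)=w_-(D_0)=1$, and let $D_1$ (resp. $D_1'$) be obtained from $D_0$ by one $\Omega_6$ (resp. $\Omega'_6$) move introducing a marked vertex with a kink. By Theorem~\ref{thm-eqiv-ch-digs}, $D_1$ and $D_1'$ are admissible and present the same trivial $2$-sphere as $D_0$, whereas $w_+(D_1)=2$ and $w_-(D_1')=2$. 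Since $w_+$ is invariant under the moves of $\mathfrak S\setminus\{\Omega_6\}$ but $w_+(D_0)\neq w_+(D_1)$, no finite sequence of such moves carries $D_0$ to $D_1$, so $\Omega_6$ cannot be realized by the other moves in $\mathfrak S$; the same argument with $w_-$, $D_0$ and $D_1'$ gives the statement for $\Omega'_6$.

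The easy part is the bookkeeping above; the step I expect to demand the most care is verifying that $\Omega_7$ and $\Omega_8$ — the only moves of $\mathfrak S$ acting on two marked vertices simultaneously — leave both $w_+$ and $w_-$ unchanged, since there one must resolve all the marked vertices in the local diagrams, keep the parallel/perpendicular smoothing rule consistent throughout, and confirm that no closed component is created or destroyed in either resolution before reading off the resulting Reidemeister moves. (Note that the Euler characteristic identity only forces $\Delta w_+=-\Delta w_-$ for these two moves, so this has to be checked honestly from the pictures.)
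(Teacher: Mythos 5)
Your proposal is correct and follows essentially the same route as the paper: the paper's proof also uses the numbers of components of the positive and negative resolutions (denoted $\mu_+(D)$ and $\mu_-(D)$ there), observes that each is preserved by every move in $\mathfrak S$ except $\Omega_6$ (resp.\ $\Omega'_6$), and notes that those moves change the respective count by one. Your additional bookkeeping via the Euler characteristic and the explicit pair of diagrams are fine but not needed beyond what the paper records.
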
 

\begin{proof}
Let $D$ be a marked graph diagram and let $\mu_+(D)$ and $\mu_-(D)$ denote the numbers of components of the positive resolution $L_+(D)$ and the negative resolution $L_-(D)$ of $D$, respectively. Then it is easily seen that $\mu_+(D)$ is invariant under all moves in $\mathfrak S$ except $\Omega_6$ and $\mu_-(D)$ is invariant under all moves in $\mathfrak S$ except $\Omega'_6$. But, the move $\Omega_6$ changes the number of components of $L_+(D)$ by one, and $\Omega'_6$ changes the number of components of $L_-(D)$ by one. This implies the assertions.
\end{proof}

\begin{theorem}\cite[Theorem 5.4]{JKL}\label{main-thm-6-7}
The Yoshikawa move $\Omega_7$ is independent from the other moves in $\mathfrak S$.
\end{theorem}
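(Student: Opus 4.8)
The plan is to follow the strategy of Theorems~\ref{main-thm-6-1}--\ref{main-thm-6-6}: produce a \emph{quasi-invariant}, that is, a quantity $\nu(D)$ attached to each marked graph diagram $D$ which is unchanged by plane isotopies and by every move in $\mathfrak S\setminus\{\Omega_7\}$, but which is altered by a single $\Omega_7$ move. Granting such a $\nu$, the theorem is immediate: if $\Omega_7$ could be realized by a finite sequence of plane isotopies and moves from $\mathfrak S\setminus\{\Omega_7\}$, that sequence would preserve $\nu$, contradicting the change it induces. To witness the non-invariance one then exhibits two admissible marked graph diagrams $D$ and $D'$ that differ by exactly one $\Omega_7$ move with $\nu(D)\neq\nu(D')$; one can arrange $D$ and $D'$ to present the trivial $S^2$-knot (or the trivial $S^2$-link of two components), so that they are certainly related by \emph{some} finite sequence of Yoshikawa moves by Theorem~\ref{thm-eqiv-ch-digs} — the content being precisely that $\Omega_7$ must appear in it.

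The genuine work lies in constructing $\nu$, because the crude numerical quantities are of no help. Write $n(D)$ for the number of marked vertices of $D$ and $\mu_\pm(D)$ for the numbers of components of the resolutions $L_\pm(D)$. The crossing number is changed only by $\Omega_1$, but each of $n$, $\mu_+$, $\mu_-$ is changed by $\Omega_6$ or $\Omega'_6$; moreover $\chi(\mathcal S(D))=\mu_-(D)-n(D)+\mu_+(D)$ is a surface-link invariant, so the triples $(\delta n,\delta\mu_+,\delta\mu_-)$ realized by the moves of $\mathfrak S\setminus\{\Omega_7\}$ already fill the entire sublattice $\{\delta n=\delta\mu_++\delta\mu_-\}$, to which the triple realized by $\Omega_7$ also belongs. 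Thus no function of $(n,\mu_+,\mu_-)$ can isolate $\Omega_7$, and $\nu$ must be a \emph{local} quantity, assembled as a sum of contributions indexed by the crossings and marked vertices of $D$ — for instance weights recording, at each marked vertex, the over/under pattern of the strands meeting it, suitably normalized — engineered so that the local changes produced by the nine remaining moves $\Omega_1,\Omega_2,\Omega_3,\Omega_4,\Omega'_4,\Omega_5,\Omega_6,\Omega'_6,\Omega_8$ telescope to zero, while the local change produced by $\Omega_7$ does not.

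Concretely I would (a) fix the exact definition of the local weight and of $\nu$; (b) verify invariance move by move under the nine moves of $\mathfrak S\setminus\{\Omega_7\}$; and (c) compute $\nu$ on an explicit pair $D,D'$ as above. Step~(b) is the main obstacle, and within it the delicate cases are the remaining Type II moves: $\Omega_6$ and $\Omega'_6$, which are the only moves disturbing the resolution combinatorics, and above all $\Omega_8$, which — like $\Omega_7$ — slides strands past two marked vertices and is therefore the move whose effect is ``closest'' to the one we must \emph{not} absorb into $\nu$. Once a $\nu$ surviving all nine moves is in hand, step~(c) reduces to a short explicit check on two small diagrams of a trivial surface-link.
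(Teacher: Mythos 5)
Your overall strategy is the right one --- find a quantity $\nu$ unchanged by plane isotopies and by every move in $\mathfrak S\setminus\{\Omega_7\}$ but altered by a single $\Omega_7$, then evaluate it on two diagrams of a trivial surface-link differing by one $\Omega_7$ --- and your preliminary observation that no function of $n(D)$, $\mu_+(D)$, $\mu_-(D)$ or the crossing number can do the job is correct and worth making. But the proposal stops exactly where the proof has to begin: $\nu$ is never defined. Everything after ``the genuine work lies in constructing $\nu$'' is a wish-list of properties for a hypothetical local weight (``suitably normalized --- engineered so that \dots''), and your own steps (a)--(c) are explicitly deferred. Since the entire content of the theorem is the existence of such a quasi-invariant, this is a genuine gap rather than a stylistic omission; as written, nothing rules out that every candidate $\nu$ of the form you describe is also preserved by $\Omega_7$.

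For comparison: the paper states Theorem~\ref{main-thm-6-7} as a citation to \cite[Theorem 5.4]{JKL}, and then supplies an alternative argument with a concrete and very simple quasi-invariant. Given a marked graph diagram $D$, replace each marked vertex by a transversal crossing (forget the marker, let the two strands pass through each other) to obtain a classical link diagram $\tilde D$, and set $\nu(D)=\sharp D$, the number of components of $\tilde D$. Every move in $\mathfrak S\setminus\{\Omega_7\}$ preserves the resulting connectivity --- in particular $\Omega_8$, the move you rightly single out as the delicate one, only slides a strand past its two marked vertices without reconnecting them --- while $\Omega_7$ reroutes the strands through its two marked vertices, and one easily exhibits $D$, $D'$ differing by a single $\Omega_7$ with $\sharp D\neq\sharp D'$. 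Note that this $\nu$ is global connectivity data rather than a telescoping sum of local weights at crossings and marked vertices, so it is not even clear that an invariant of the specific shape you propose exists. To complete your proof you must either reproduce such a construction and carry out the nine move-by-move verifications, or simply invoke \cite[Theorem 5.4]{JKL}.
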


Let $D$ be a marked graph diagram and let $\tilde D$ be a classical link diagram obtained form $D$ by replacing each marked vertex $\xy (-5,5);(5,-5) **@{-}, 
(5,5);(-5,-5) **@{-},  
(3,-0.2);(-3,-0.2) **@{-},
(3,0);(-3,0) **@{-}, 
(3,0.2);(-3,0.2) **@{-}, 
\endxy$ to the crossing 
$\xy (-5,5);(5,-5) **@{-}, 
(5,5);(2,1.9) **@{-}, (-5,-5);(-2,-2.1) **@{-},   
\endxy.$ Let $\sharp D$ denote the number of the components of $\tilde D$. Then it is obvious that $\sharp D$ is an invariant of $D$ under all Yoshikawa moves in $\mathfrak S$ except the move $\Omega_7$. It is not difficult to find two marked graph diagrams $D$ and $D'$ that are differ by a single $\Omega_7$ and satisfy $\sharp D \not= \sharp D'$. This gives an alternative proof of Theorem \ref{main-thm-6-7}.

\begin{corollary}
Let $\Gamma$ be an oriented Yoshikawa move in $\mathfrak S_1-\{\Gamma_5, \Gamma_8\}$ ($\mathfrak S_2-\{\Gamma_5, \Gamma_8\}$, resp.). Then $\Gamma$ is independent from the other oriented moves in $\mathfrak S_1$ ($\mathfrak S_2$. resp.). 
\end{corollary}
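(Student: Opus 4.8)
The proof will deduce the Corollary from the unoriented independence results, Theorems~\ref{main-thm-6-1}--\ref{main-thm-6-7}, together with the minimality facts of \"Ostlund and Polyak recalled in Section~\ref{sect-mg-crm}. Throughout, let $\widetilde D$ be the classical link diagram obtained from a marked graph diagram $D$ by replacing every marked vertex with the fixed crossing (as just before Theorem~\ref{main-thm-6-7}), and let $v(D)$ denote the number of marked vertices of $D$. The argument splits according to how many oriented versions of the underlying unoriented move of $\Gamma$ sit inside the generating set.

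\emph{Case 1: $\Gamma$ is the only oriented representative in $\mathfrak S_j$ of its underlying unoriented Yoshikawa move $\Omega$.} These are $\Gamma_2,\Gamma_3,\Gamma_4,\Gamma'_4,\Gamma_6,\Gamma'_6,\Gamma_7$ for $\mathfrak S_1$ and $\Gamma_{3a},\Gamma_4,\Gamma'_4,\Gamma_6,\Gamma'_6,\Gamma_7$ for $\mathfrak S_2$. If $\Gamma$ were realized by plane isotopies and moves of $\mathfrak S_j-\{\Gamma\}$, then erasing orientations throughout realizes $\Omega$ by plane isotopies and moves from a subset of $\mathfrak S-\{\Omega\}$, since no move of $\mathfrak S_j-\{\Gamma\}$ has underlying unoriented move $\Omega$. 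For $\Omega=\Omega_2,\Omega_6,\Omega'_6,\Omega_7$ this contradicts Theorem~\ref{main-thm-6-2}, \ref{main-thm-6-6} or \ref{main-thm-6-7}; for $\Gamma=\Gamma_3$ it contradicts Theorem~\ref{main-thm-6-3}; for $\Gamma=\Gamma_{3a}$, where the underlying move $\Omega_{3a}$ is not in $\mathfrak S$, one notes that a realization of $\Omega_{3a}$ from $\mathfrak S-\{\Omega_3\}$ would, because $\{\Omega_1,\Omega_2,\Omega_{3a}\}$ generates all unoriented Reidemeister moves (Section~\ref{sect-mg-crm}), also realize $\Omega_3$ from $\mathfrak S-\{\Omega_3\}$, contradicting Theorem~\ref{main-thm-6-3} again; and for $\Gamma=\Gamma_4,\Gamma'_4$ one uses that the invariant of \cite{JKaL} behind Theorem~\ref{main-thm-6-4} in fact detects independence of $\Omega_4$ (resp.\ $\Omega'_4$) already from $\mathfrak S-\{\Omega_4\}$ (resp.\ $\mathfrak S-\{\Omega'_4\}$), which is what is needed since $\mathfrak S_j-\{\Gamma_4\}$ still contains $\Gamma'_4$.

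\emph{Case 2: the underlying unoriented move of $\Gamma$ has two oriented representatives in $\mathfrak S_j$.} This happens for $\Omega_1$ (representatives $\Gamma_1,\Gamma'_1$ in $\mathfrak S_1$; $\Gamma_1,\Gamma_{1a}$ in $\mathfrak S_2$) and, in $\mathfrak S_2$ only, for $\Omega_2$ (representatives $\Gamma_{2b},\Gamma_{2c}$). Here erasing orientations is not enough, since $\mathfrak S_j-\{\Gamma\}$ still contains the other oriented version. Instead, for $\Gamma=\Gamma_1$ I would build an integer-valued function $\Phi(D)=\alpha\,w(\widetilde D)+\beta\,\nu(\widetilde D)+\gamma\,v(D)$, where $w$ is the writhe and $\nu$ the total rotation (Whitney) number. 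A finite diagram chase through Figures~\ref{fig-moves-type-II}, \ref{fig-r45678n} and \ref{fig-om2} shows that each of $\Gamma_2,\Gamma_3,\Gamma_4,\Gamma'_4,\Gamma_5,\Gamma_7,\Gamma_8$ induces on $\widetilde D$ only plane isotopies and classical $\Omega_2$- and $\Omega_3$-moves and leaves $v(D)$ fixed, hence preserves each of $w(\widetilde D),\nu(\widetilde D),v(D)$; while $\Gamma_6$ and $\Gamma'_6$ each induce on $\widetilde D$ one and the same fixed oriented curl move (the substituted crossing closed off by the adjacent cap) and change $v(D)$ by $\mp 1$, so $\alpha,\beta,\gamma$ may be chosen to cancel their combined effect on $\Phi$. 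Choosing furthermore $(\alpha,\beta)$ orthogonal to the $(w,\nu)$-effect vector of the other oriented $\Omega_1$ move makes $\Phi$ invariant under all of $\mathfrak S_j-\{\Gamma_1\}$. Finally $\Phi$ is not invariant under $\Gamma_1$: by minimality of the generating sets in Theorems~\ref{thm-polyak} and \ref{thm-polyak-1}, $\Gamma_1$ is not a consequence of the other oriented $\Omega_1$ move together with oriented $\Omega_2,\Omega_3$-moves, so by \"Ostlund's writhe--rotation argument the two $(w,\nu)$-effect vectors are $\mathbb R$-linearly independent and a functional vanishing on one does not vanish on the other. Producing $D,D'$ that differ by a single $\Gamma_1$ with $\Phi(D)\ne\Phi(D')$ then completes this case; $\Gamma'_1$ and $\Gamma_{1a}$ are treated symmetrically. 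For $\Gamma_{2b},\Gamma_{2c}$ in $\mathfrak S_2$ the writhe and rotation number cannot tell the two apart, because every $\Omega_2$-move preserves both; there one replaces the $(w,\nu)$-part of $\Phi$ by the $\Omega_2$-distinguishing quantity underlying Polyak's proof of Theorem~\ref{thm-polyak-1}, again corrected by the $v(D)$-term to absorb the fixed contribution of $\Gamma_6,\Gamma'_6$, and invokes Theorem~\ref{thm-polyak-1} for the statement that $\Gamma_{2b}$ is not generated by the remaining classical moves of $\mathfrak S_2$.

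\emph{The main obstacle} I expect is the diagram chase in Case~2: one must verify that the marked-vertex moves $\Omega_4,\Omega_5,\Omega_7,\Omega_8$ act on $\widetilde D$ only through $\Omega_2$- and $\Omega_3$-moves (the configurations for $\Omega_7$ and $\Omega_8$ being the most tedious) and that $\Omega_6,\Omega'_6$ act through one and the same curl move, which is exactly what makes the $v(D)$-correction legitimate. Conceptually, the care lies in packaging the linear algebra so that it genuinely uses the \"Ostlund--Polyak minimality: a bare writhe count is not enough, precisely because $\Gamma_6$ and $\Gamma'_6$ also change the writhe of $\widetilde D$. The $\Gamma_{2b}$/$\Gamma_{2c}$ sub-case carries the extra task of extracting the explicit $\Omega_2$-distinguishing quantity from \cite{Po} and checking that it too changes by a fixed amount under $\Gamma_6,\Gamma'_6$.
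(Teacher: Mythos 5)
The paper's entire proof is your Case~1 reduction applied uniformly to every $\Gamma$: assume $\Gamma$ is realized by moves $M_1,\dots,M_k\in\mathfrak S_j-\{\Gamma\}$, forget orientations, and contradict the unoriented independence results of Section~\ref{sect-ind-ym}. So your Case~1 coincides with the paper's argument. You are right to be uneasy about two points the paper passes over: when $\Gamma$ and some $M_i$ are two oriented versions of the same unoriented move (the pairs $(\Gamma_1,\Gamma'_1)$, $(\Gamma_1,\Gamma_{1a})$, $(\Gamma_{2b},\Gamma_{2c})$), forgetting orientations does not land in $\mathfrak S-\{\overline{\Gamma}\}$; and for $\Gamma_4$, $\Gamma'_4$ the reduction needs independence of $\Omega_4$ from $\mathfrak S-\{\Omega_4\}$, whereas Theorem~\ref{main-thm-6-4} only gives joint independence of the pair from $\mathfrak S-\{\Omega_4,\Omega'_4\}$. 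But your proposed fix for the latter --- that the invariant of \cite{JKaL} ``in fact'' separates $\Omega_4$ from $\mathfrak S-\{\Omega_4\}$ --- is exactly the paper's open Question~1, so it cannot be asserted.

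The genuine gap is in Case~2, where your construction fails for concrete reasons. First, $\widetilde D$ is not an oriented link diagram: at an oriented marked vertex the two edges on one diagonal both point \emph{into} the vertex and the two on the other diagonal both point \emph{out} (this is forced by requiring both resolutions $L_{\pm}(D)$ to inherit orientations, and is what Fig.~\ref{fig-ori-mg} depicts), so the crossing you substitute for a marked vertex has no consistent strand orientations and neither $w(\widetilde D)$ nor $\nu(\widetilde D)$ is defined. Second, even disregarding orientations, your claim that $\Gamma_7$ acts on $\widetilde D$ only through plane isotopies and $\Omega_2$-, $\Omega_3$-moves is false: the remark following Theorem~\ref{main-thm-6-7} is precisely that $\Omega_7$ can change the number of components of $\widetilde D$, which no composition of Reidemeister moves can do, and which would in any case change the total rotation number. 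Third, the $\Gamma_{2b}/\Gamma_{2c}$ sub-case is only a pointer to an unnamed quantity ``underlying Polyak's proof'' and is not carried out. So Case~2 does not close as written; if you want an argument that genuinely separates $\Gamma_1$ from $\Gamma'_1$ in the presence of the marked-vertex moves, the invariant must be built from the oriented resolutions $L_{\pm}(D)$ (which are honestly oriented) rather than from $\widetilde D$, and its behaviour under $\Gamma_6$, $\Gamma'_6$ and $\Gamma_7$ must be recomputed from scratch.
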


\begin{proof}
Suppose that $\Gamma$ is realized by a finite sequence $M_1, M_2, \ldots, M_k$ with $k \geq 1$ and $M_i \in \mathfrak S_1-\{\Gamma\}$ ($\mathfrak S_2-\{\Gamma\}$. resp.) for all $1\leq i \leq k$. Let $\overline{\Gamma}$ and $\overline{M}_i$ be the moves $\Gamma$ and $M_i$ forgetting the orientations, respectively. Then we see that $\overline{\Gamma}$ is realized by the finite sequence $\overline{M}_1, \overline{M}_2, \ldots, \overline{M}_k$. Since $\overline{\Gamma} \in \mathfrak S-\{\Omega_5, \Omega_8\}$ and $\overline{M}_i \in \mathfrak S-\{\overline{\Gamma}\}$, we have a contradiction. This completes the proof.
\end{proof}

We now close this section with the following four questions. If the answers of these questions are all affirmative, then the generating sets $\mathfrak S$, $\mathfrak S_1$ and $\mathfrak S_2$ are all minimal.

\bigskip

\noindent{\bf Question 1.}
Is the Yoshikawa move $\Omega_4$ independent from the other moves in $\mathfrak S$? 

\bigskip

\noindent{\bf Question 2.}
Is the Yoshikawa move $\Omega'_4$ independent from the other moves in $\mathfrak S$?

\bigskip

\noindent{\bf Question 3.}
Is the Yoshikawa moves $\Omega_5$ independent from the other moves in $\mathfrak S$?

\bigskip

\noindent{\bf Question 4.}
Is the Yoshikawa move $\Omega_8$ independent from the other moves in $\mathfrak S$?


\section*{Acknowledgments}
The authors would like to express their sincere gratitude to the referee for many valuable comments. 
This work was supported by Basic Science Research
Program through the National Research Foundation of Korea(NRF) funded
by the Ministry of Education, Science and Technology (2013R1A1A2012446).


\begin{thebibliography}{0}

\bibitem{AB} J. W. Alexander and G. B. Briggs, On types of knotted curves, \textit{Ann. of Math.} {\bf 28} (1926/1927), 562--586.

\bibitem{As} M. Asada, An unknotting sequence for surface-knots
represented by ch-diagrams and their genera, \textit{Kobe J. Math.} {\bf 18} (2001), 163--180.

\bibitem{CS} J. S. Carter and M. Saito, Knotted surfaces and their diagrams, Mathematical Surveys and Monographs {\bf 55}, AMS Providence, RI, 1998.

\bibitem{Fox}
R. H. Fox, A quick trip through knot theory, in {\it Toplogy of
$3$-manifolds and Related Topics}, (Prentice-Hall, Inc., Englewood
Cliffs, N.J., 1962), 120--167.

\bibitem{JKaL} Y. Joung, S. Kamada and S. Y. Lee, Applying Lipson's state models to marked graph diagrams of surface-links, arXiv:1411.5740.

\bibitem{JKL} Y. Joung, J. Kim and S. Y. Lee, Ideal coset invariants for surface-links in $\Bbb R\sp 4$, \textit{J. Knot Theory Ramifications} {\bf 22} (2013), No. 9, 1350052(25 pages).

\bibitem{Ka2}
S. Kamada, {\it Braid and Knot Theory in dimension Four}, Mathematical Surveys and Monographs Vol. 95, (American Mathematical Society, 2002).

\bibitem{Kar} K. Kawamura, On relationship between seven types of Roseman moves, preprint.

\bibitem{Kaw}
A. Kawauchi, \textit{A survey of knot theory}, (Birkh\"auser, 1996).

\bibitem{KSS}
A. Kawauchi, T. Shibuya, S. Suzuki, Descriptions on surfaces
in four-space, I; Normal forms, \textit{Math. Sem. Notes Kobe Univ.} {\bf 10}(1982), 75--125.

\bibitem{KK}
C. Kearton and V. Kurlin, All 2-dimensional links in 4-space live inside a universal 3-dimensional polyhedron, \textit{Algebraic \& Geometric Topology} {\bf 8} (2008), 1223--1247.

\bibitem{JKL2} J. Kim, Y. Joung and S. Y. Lee, On the Alexander biquandles of oriented surface-links via marked graph diagrams, \textit{J. Knot Theory Ramifications} {\bf 23} (2014), No. 7, 1460007 (26 pages).

\bibitem{Le1} S. Y. Lee, Invariants of surface links in $\Bbb R\sp 4$ via classical link invariants. {\it Intelligence of low dimensional topology 2006}, 189--196, Ser. Knots Everything, {\bf 40}, (World Sci. Publ., Hackensack, NJ, 2007).

\bibitem{Le2} S. Y. Lee, Invariants of surface links in $\Bbb R\sp 4$ via skein relation, \textit{J. Knot Theory Ramifications} {\bf 17} (2008), 439--469.

\bibitem{Le3} S. Y. Lee, Towards invariants of surfaces in $4$-space
via classical link invariants,
\textit{Trans. Amer. Math. Soc.} {\bf 361} (2009), 237--265.

\bibitem{Ma} V. Manturov, Knot Theory (Chapman \& Hall/CRC, 2004).

\bibitem{Lo}
S. J. Lomonaco, Jr., The homotopy groups of knots I. How to
compute the algebraic $2$-type, \textit{Pacific J. Math.} {\bf 95}(1981),
349--390.

\bibitem{Ol} Olof-Petter \"Ostlund, Invariants of knot diagrams and relations among Reidemeister moves. \textit{J. Knot Theory Ramifications}  {\bf 10}  (2001),  no. 8, 1215--1227. 

\bibitem{Po} M. Polyak, Minimal generating sets of Reidemeister moves, \textit{Quantum Topol.}  {\bf 1}  (2010),  no. 4, 399--411.

\bibitem{Re} K. Reidemeister, Knot theory. Translated from the German by Leo F. Boron, Charles O. Christenson and Bryan A. Smith. BCS Associates, Moscow, Idaho, 1983.

\bibitem{Ro} D. Roseman, Reidemeister-type moves for surfaces in four-dimensional space, Knot theory (Warsaw, 1995), 347-380, Banach Center Publ., {\bf 42}, Polish Acad. Sci., Warsaw, 1998.

\bibitem{So}
M. Soma, Surface-links with square-type ch-graphs,
Proceedings of the First Joint Japan-Mexico Meeting in Topology
(Morelia, 1999),  \textit{Topology Appl.} {\bf 121} (2002), 231--246.

\bibitem{Sw}
F. J. Swenton, On a calculus for $2$-knots and surfaces in
$4$-space,  \textit{J. Knot Theory and its Ramifications} {\bf 10}(2001), 1133--1141.

\bibitem{Ya} T. Yashiro, A note on Roseman moves, Kobe J. Math. {\bf 22} (2005), 31--38.

\bibitem{Yo}
K. Yoshikawa, An enumeration of surfaces in four-space,
 \textit{Osaka J. Math.} {\bf 31}(1994), 497--522.

\end{thebibliography}
\end{document}